\pdfoutput=1
\documentclass{amsart}

\usepackage{amssymb}
\usepackage{braket}
\usepackage{mathrsfs}
\usepackage{ifthen}
\usepackage{here}
\usepackage{todonotes}
\usepackage{tikz}
\usetikzlibrary{patterns,decorations.pathreplacing,calligraphy}
\usepackage{comment} 
\usepackage{mleftright}
\usepackage[pagebackref,hypertexnames=false]{hyperref} 
\usepackage{cleveref}
 \usepackage[all]{xy}
 \usepackage{amscd}
 \usepackage[alphabetic,backrefs,msc-links]{amsrefs}
 \usepackage{color}
 \usepackage{enumitem}
\newlist{steps}{enumerate}{1}
\setlist[steps, 1]{label = Step \arabic*:}
\usepackage[abs]{overpic}
\usepackage{tikz-cd}
\usepackage{pinlabel}
\usepackage{amsmath, amsthm,verbatim,amsfonts, graphicx, enumerate}

\newcounter{casenum}
\newenvironment{caseof}{\setcounter{casenum}{1}}{\vskip.5\baselineskip}
\newcommand{\case}[2]{\vskip.5\baselineskip\par\noindent {\bfseries Case \arabic{casenum}:} #1\\#2\addtocounter{casenum}{1}}

\usepackage{geometry}
\geometry{verbose,tmargin=2.7cm,bmargin=2.7cm,lmargin=2.3cm,rmargin=2.3cm,footskip=1cm}

\makeatletter
\DeclareRobustCommand\widecheck[1]{{\mathpalette\@widecheck{#1}}}
\def\@widecheck#1#2{%
   \setbox\z@\hbox{\m@th$#1#2$}%
   \setbox\tw@\hbox{\m@th$#1%
      {%
         \vrule\@width\z@\@height\ht\z@
         \vrule\@height\z@\@width\wd\z@}$}%
   \dp\tw@-\ht\z@
   \@tempdima\ht\z@ \advance\@tempdima2\ht\tw@ \divide\@tempdima\thr@@
   \setbox\tw@\hbox{%
      \raise\@tempdima\hbox{\scalebox{1}[-1]{\lower\@tempdima\box\tw@}}}%
   {\ooalign{\box\tw@ \cr \box\z@}}}
\makeatother

\theoremstyle{plain}
\newtheorem{thm}{Theorem}[section]
\crefname{thm}{Theorem}{Theorems}
\Crefname{thm}{Theorem}{Theorems}
\newtheorem{prop}[thm]{Proposition}
\crefname{prop}{Proposition}{Propositions}
\Crefname{prop}{Proposition}{Propositions}
\newtheorem{lem}[thm]{Lemma}
\crefname{lem}{Lemma}{Lemmas}
\Crefname{lem}{Lemma}{Lemmas}
\newtheorem{cor}[thm]{Corollary}
\crefname{cor}{Corollary}{Corollaries}
\Crefname{cor}{Corollary}{Corollaries}

\crefname{claim}{Claim}{Claims}
\Crefname{claim}{Claim}{Claims}

\crefname{property}{Property}{Properties}
\Crefname{property}{Property}{Properties}

\crefname{problem}{Problem}{Problems}
\Crefname{problem}{Problem}{Problems}

\crefname{conjecture}{Conjecture}{Conjecture}
\Crefname{conjecture}{Conjecture}{Conjecture}

\theoremstyle{definition}

\crefname{defn}{Definition}{Definitions}
\Crefname{defn}{Definition}{Definitions}

\crefname{notation}{Notation}{Notations}
\Crefname{notation}{Notation}{Notations}

\crefname{convention}{Convention}{Conventions}
\Crefname{convention}{Convention}{Conventions}

\crefname{cond}{Condition}{Conditions}
\Crefname{cond}{Condition}{Conditions}

\crefname{assum}{Assumption}{Assumptions}
\Crefname{assum}{Assumption}{Assumptions}

\crefname{conj}{Conjecture}{Conjectures}
\Crefname{conj}{Conjecture}{Conjectures}

\crefname{claim1}{Claim}{Claims}
\Crefname{claim1}{Claim}{Claims}
\Crefname{ques}{Question}{Question}

\crefname{que}{Question}{Question}
\Crefname{que}{Question}{Question}

\theoremstyle{remark}
\newtheorem{rem}[thm]{Remark}
\crefname{rem}{Remark}{Remarks}
\Crefname{rem}{Remark}{Remarks}
\newtheorem{ex}[thm]{Example}
\crefname{ex}{Example}{Examples}
\Crefname{ex}{Example}{Examples}

\crefname{section}{Section}{Sections}
\Crefname{section}{Section}{Sections}
\crefname{subsection}{Subsection}{Subsections}
\Crefname{subsection}{Subsection}{Subsections}
\crefname{figure}{Figure}{Figures}
\Crefname{figure}{Figure}{Figures}

\newcommand{\Z}{\mathbb{Z}}

\newcommand{\Q}{\mathbb{Q}}

\newcommand{\fraks}{\mathfrak{s}}
\newcommand{\frakt}{\mathfrak{t}}

\newcommand{\id}{\mathrm{id}}

\newcommand{\C}{\mathbb{C}}
\newcommand{\s}{\mathfrak{s}}

\def\Om{\Omega}

\newcommand{\N}{\mathbb N}
\newcommand{\R}{\mathbb R}

\def\ker{\operatorname{Ker}}

\def\dim{\operatorname{dim}}

\def\id{\operatorname{Id}}

\newcommand{\mbar}[1]{{\ooalign{\hfil#1\hfil\crcr\raise.167ex\hbox{--}}}}

\def\wt{\widetilde}

\newcommand*{\QEDB}{\null\nobreak\hfill\ensuremath{\square}}%

     \RequirePackage{rotating}                   
    \def\HMt{%
       \setbox0=\hbox{$\widehat{\mathit{HM}}$}
       \setbox1=\hbox{$\mathit{HM}$}
       \dimen0=1.1\ht0
       \advance\dimen0 by 1.17\ht1
       \smash{\mskip2mu\raise\dimen0\rlap{%
          \begin{turn}{180}
              {$\widehat{\phantom{\mathit{HM}}}$}
           \end{turn}} \mskip-2mu    
                \mathit{HM}
                    }{\vphantom{\widehat{\mathit{HM}}}}{}}

\title{Cables of the figure-eight knot via real Fr\o yshov invariants}

\author{Sungkyung Kang}
\address{Mathematical Institute, University of Oxford, United Kingdom}
\email{sungkyung38@icloud.com}

\author{JungHwan Park}
\address{Department of Mathematical Sciences, KAIST, Republic of Korea}
\email{jungpark0817@kaist.ac.kr}

\author{Masaki Taniguchi} 
\address{Department of Mathematics, Kyoto University, Japan}
\email{taniguchi.masaki.7m@kyoto-u.ac.jp}

\begin{document}

\begin{abstract}
We prove that the $(2n,1)$-cable of the figure-eight knot is not smoothly slice when $n$ is odd, by using the real Seiberg--Witten Fr\o yshov invariant of Konno--Miyazawa--Taniguchi. For the computation, we develop an $O(2)$-equivariant version of the lattice homotopy type, originally introduced by Dai--Sasahira--Stoffregen. This enables us to compute the real Seiberg--Witten Floer homotopy type for a certain class of knots. Additionally, we present some computations of Miyazawa's real framed Seiberg--Witten invariant for $2$-knots.
\end{abstract}

\maketitle

\section{Introduction}

Casson and Gordon~\cite[Theorem 5.1]{Casson-Gordon:1983-1} proved that a fibered knot in a homology sphere is homotopically ribbon if and only if its closed monodromy extends over a handlebody. Utilizing this characterization, Miyazaki~\cite{Miyazaki:1994-1} constructed a large family of fibered knots and proved that each knot in this family is not ribbon. Within this family, there are two important sets of knots: the first one~\cite[Example~1]{Miyazaki:1994-1} is the set of nontrivial connected sums of iterated torus knots. The first set is related to Rudolph's conjecture~\cite{Rudolph:1976-1}, which asserts that the set of algebraic knots is linearly independent in the smooth knot concordance group (see \cites{Litherland:1984-1, Hedden-Kirk-Livingston:2012-1, Abe-Tagami:2016-1, Baker:2016-1, CKP:2023-1} for related results).

The other set~\cite[Example 2]{Miyazaki:1994-1} consists of the $(2n,1)$-cables of fibered negative-amphiciral knots with irreducible Alexander polynomial.\footnote{For the rest of the cables, it can be verified that they are not algebraically slice using Tristram-Levine Signatures~\cites{Tristram:1969-1, Levine:1969-1} and \cite{Kawauchi:1980-1} (see also \cite[Theorem 6]{CLR:2008-1}).} These knots are known to be algebraically slice and strongly rationally slice~\cites{kawauchi1, Cha:2007-1, Kim-Wu:2018-1}.\footnote{See \cite[Definition 2]{Kim-Wu:2018-1} for the precise definition of strongly rationally slice knots.} While these knots attracted considerable attention due to their relation to the slice-ribbon conjecture~\cite[Problem 25]{Fox:1962-1}, no proof of nonsliceness had been established for them until recently. In~\cite[Theorem 1.1]{DKMPS:2022-1} (see also \cite[Theorem 2.1]{ACMPS:2023-1} and \cite[Corollary 1.20]{KMT23f}), Dai, Kang, Mallick, Park, and Stoffregen proved that the simplest case--the $(2,1)$-cable of the figure-eight knot--is not smoothly slice. In fact, they show that a $(2,1)$-cable of a Floer-thin knot with nonvanishing Arf invariant has infinite order in the smooth concordance group. In this article, we consider $(2n,1)$-cables in general and obtain the following:

\begin{thm}\label{thm:2n1of41}
Let $E$ be the figure-eight knot, and let $E_{2n,1}$ denote the $(2n,1)$-cable of $E$. For each positive odd integer $n$, the knot $E_{2n,1}$ does not bound a normally immersed disk in $B^4$ with only negative double points. In particular, for each odd integer $n$, the knot $E_{2n,1}$ is not smoothly slice. 
\end{thm}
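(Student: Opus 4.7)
The strategy is to apply the real Seiberg--Witten Fr\o yshov invariant $\delta_{\mathbb{R}}$ of Konno--Miyazawa--Taniguchi to the double branched cover $\Sigma_2(E_{2n,1})$ equipped with its deck involution $\tau$. The key property to exploit is a Fr\o yshov-type inequality: if $K \subset S^3$ bounds a normally immersed disk in $B^4$ with only negative double points, then $\delta_{\mathbb{R}}(\Sigma_2(K),\tau)$ is bounded above by a definite quantity (arising from blowing up the double points and taking the double branched cover, which produces a negative-semidefinite real $4$-manifold with controlled reducible contribution). The plan is therefore to compute $\delta_{\mathbb{R}}$ of $\Sigma_2(E_{2n,1})$ and show it violates this bound when $n$ is odd.

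The first step is to produce an explicit description of $(\Sigma_2(E_{2n,1}), \tau)$. Since the double branched cover of the figure-eight is $L(5,2)$, and $(2n,1)$-cabling interacts with branched double covering in a controlled way (the cable lifts to a knot in $L(5,2)$ whose surgery yields $\Sigma_2(E_{2n,1})$), I expect $\Sigma_2(E_{2n,1})$ to admit a surgery/plumbing description lying in a class (almost-rational or close to it) to which lattice techniques apply, with $\tau$ acting in a combinatorially visible way on the plumbing.

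The main computation is the real Seiberg--Witten Floer homotopy type of $(\Sigma_2(E_{2n,1}),\tau)$ via the $O(2)$-equivariant lattice homotopy type developed in this paper. The $S^1$ factor of $O(2)$ is the usual spinorial gauge symmetry, while the additional $\mathbb{Z}/2$ encodes the real involution lifting $\tau$ to the $\spinc$ structure. I would take the lattice cell complex of Dai--Sasahira--Stoffregen associated to the plumbed description from the previous step, enrich it with the induced $\mathbb{Z}/2$-action, and extract $\delta_{\mathbb{R}}$ from the resulting $O(2)$-equivariant stable homotopy type. A parity-dependent feature of this lattice---most likely the behaviour of a characteristic vector on the involution fixed set---should then force $\delta_{\mathbb{R}}(\Sigma_2(E_{2n,1}),\tau)$ out of the Fr\o yshov-admissible range precisely when $n$ is odd, completing the obstruction.

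The main obstacle, as I see it, is the construction and justification of the $O(2)$-equivariant lattice homotopy type: one must show that the lattice model computes not merely the $S^1$-equivariant Seiberg--Witten Floer homotopy type but also the full $O(2)$-equivariant real version, at least for the class of plumbed $3$-manifolds relevant here. This entails a careful equivariant matching between the real Seiberg--Witten moduli spaces on the plumbing and the cell structure coming from the lattice, together with a verification that $\tau$ acts on the reducible configurations and $\spinc$ structures in the way the lattice model predicts. Granting this framework, identifying the plumbing, doing the lattice computation for $\Sigma_2(E_{2n,1})$, and numerically checking the obstruction become finite, explicit tasks.
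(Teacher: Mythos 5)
Your high-level framework is right---the argument does hinge on the real Seiberg--Witten Fr\o yshov machinery and the $O(2)$-equivariant lattice homotopy type---but there are two concrete gaps in the plan that the paper's actual proof is carefully designed to avoid.

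First, the invariant you chose is the wrong one. You propose to show $\delta_R(E_{2n,1})$ violates a Fr\o yshov-type bound, but the paper observes (end of Section~4, using \cite[Theorem~3.23]{KMT:2023}) that $\delta_R(E_{2n,1})=0$: the $(2,1)$-cable bounds null-homologous disks in both $\mathbb{CP}^2$ and $\overline{\mathbb{CP}}^2$, which forces the $\mathbb{Z}_2$-equivariant Fr\o yshov invariant to vanish, and cabling preserves this. So $\delta_R$ alone gives no obstruction. One must use the $\mathbb{Z}_4$-equivariant refinement $\underline{\delta}_R$ (playing the role of $\underline{d}$ from involutive Floer theory); the paper proves $\underline{\delta}_R(E_{2n,1}) \le -1$, strictly below $\delta_R$, and it is Corollary~\ref{cor:negativeimmersedpoints} (which requires $\underline{\delta}_R \ge 0$, not $\delta_R \ge 0$) that produces the contradiction.

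Second, and more seriously, the direct lattice computation on $\Sigma_2(E_{2n,1})$ that you propose is not available. $\Sigma_2(E_{2n,1})$ is a graph manifold obtained by splicing a Seifert piece to $L(5,2)$, and it is \emph{not} the boundary of a negative-definite almost-rational plumbing; the $O(2)$-lattice homotopy type developed in the paper (Theorems~\ref{main computation} and~\ref{thm:Montesinos}) applies only to AR plumbed manifolds. The paper circumvents this by an indirect route: the figure-eight knot is concordant to the unknot inside a twice-punctured $2\mathbb{CP}^2$ with the concordance representing class $(1,3)$ (due to Aceto--Castro--Miller--Park--Stipsicz), and cabling this annulus yields a concordance $S_{2n}$ from $E_{2n,1}$ to the torus knot $T_{2n,1-20n}$. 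Plugging $S_{2n}$ into the real Fr\o yshov cobordism inequality (Theorem~\ref{Theorem B for links}, with $b_2^+(\Sigma_2(S_{2n}))-b_2^+(X)=1$) gives $\underline{\delta}_R(E_{2n,1}) + \tfrac{1}{8} \le \overline{\delta}_R(T_{2n,1-20n})$. The lattice machinery is then applied to $\Sigma_2(T_{2n,1-20n}) = \Sigma(2,2n,20n-1)$, which \emph{is} a Brieskorn sphere with an AR plumbing carrying a $\mathbb{Z}_2$-symmetry swapping two legs; an almost $I$-invariant path produces a single fixed $\mathbb{C}$-cell, whence $SWF_R(T_{2n,1-20n})$ is a $\mathbb{Z}_2$-homology sphere and $\overline{\delta}_R = -\tfrac{1}{2}\bar{\mu} = -\tfrac{9}{8}$ when $n$ is odd. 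The key insight you are missing is this transfer step: compute the lattice model for the torus knot at the \emph{other} end of the concordance, not for $E_{2n,1}$ itself.
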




Here, we say a surface is \emph{normally immersed} if it is smoothly immersed in a manifold such that the only singularities are transverse double points in the interior of the surface. Recall that the \emph{4-dimensional clasp number} $c_4(K)$ of a knot $K$~\cite{Shibuya:1974-1} is the minimal number of double points in a normally immersed disk in $B^4$ bounded by $K$. A refinement $c_4^+(K)$, considered for example in \cites{Daemi-Scaduto:2020-1, Juhasz-Zemke:2020-1, Feller-Park:2022-1, Miller:2022-1, Livingston:2022-1}, is the minimal number of \emph{positive} double points in such a normally immersed disk. With this terminology, the main theorem can be compactly stated as $0 < c_4^+(E_{2n,1})$ for each positive odd integer $n$. Since a smoothly slice knot has vanishing $c_4^+$, the theorem is a strict improvement over previous results, even for the case $n=1$. 

Note that the figure-eight knot $E$ can be transformed into the unknot by changing a negative crossing to a positive one. This implies that $E$ bounds a normally immersed disk with only one negative double point, and $E_{2n,1}$ bounds a normally immersed disk with $2n$ positive double points and $4n^2$ negative double points in $B^4$. For the special case $E_{2,1}$, with some extra consideration, one can find two crossing changes, one from positive to negative and one from negative to positive, that turn $E_{2,1}$ into a smoothly slice knot, which in particular implies that $c_4^+(E_{2,1}) = 1$ (see Remark~\ref{rmk:c4(4121)}). Determining $c_4(E_{2n,1})$ and $c_4^+(E_{2n,1})$ in general seems to be an interesting yet challenging problem.

Our proof shows that for each odd integer $n$, the double-branched cover of $E_{2n,1}$ does not bound a 4-manifold $W$ with the following properties:
\begin{itemize}
    \item $W$ is a smooth spin 4-manifold with a spin stucture $\s$,
    \item $\tau \colon W \to W$ is a smooth involution such that $\tau|_{\partial W}$ is the deck transformation and $\tau ^* \fraks \cong \fraks$, and
    \item $b_1(W)=0$,  $b_2^+(W)-b_2^+(W/\tau)=0$, and $\sigma(W)\leq 0$.
\end{itemize}
In particular, the double-branched cover does not bound an equivariant $\Z_2$-homology ball; that is, a $\Z_2$-homology ball over which the branching involution extends as a smooth involution. From the nonexistence of such a spin 4-manifold filling of the branched cover, we can further conclude that the knot $E_{2n,1}$ does not bound a normally immersed disk with only negative double points in any $\Z_2$-homology ball.

The topological input to the theorem is the existence of a smooth concordance from the figure-eight knot to the unknot in a  twice-punctured $2\mathbb{CP}^2$, denoted by $X$, that represents $(1,3)$ in $H_2(X, \partial X; \mathbb{Z}) \cong \mathbb{Z} \oplus \mathbb{Z}$, as proved by Aceto, Castro, Miller, Park, and Stipsicz in~\cite{ACMPS:2023-1}. Our obstruction applies to all knots that permit such a concordance to a smoothly slice knot, which is the case for \cite[Theorem 2.3]{ACMPS:2023-1} as well.


\begin{thm}\label{thm:main}
Let $K$ be a knot, and let $K_{2n,1}$ denote the $(2n,1)$-cable of $K$. Suppose that $K$ can be transformed into a slice knot by applying full negative twists along two disjoint disks, where one intersects $K$ algebraically once and the other intersects it algebraically three times. Then, for each positive odd integer $n$, the knot $K_{2n,1}$ does not bound a normally immersed disk in $B^4$ with only negative double points.
\end{thm}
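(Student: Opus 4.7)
The plan is to proceed by contradiction. Suppose $K_{2n,1}$ bounds a normally immersed disk $D\subset B^4$ with only negative double points. Locally blowing up each double point turns $D$ into a smoothly embedded disk in an iterated boundary sum of $B^4$ with copies of $\overline{\mathbb{CP}^2}$, and taking the double branched cover along this embedded disk yields a smooth spin $4$-manifold $W$ with $\partial W=\Sigma_2(K_{2n,1})$, equipped with a smooth involution $\tau$ extending the branching deck transformation and satisfying $b_1(W)=0$, $b_2^+(W)-b_2^+(W/\tau)=0$, and $\sigma(W)\le 0$; this is exactly the content of the discussion preceding the theorem statement.

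Next, the hypothesis on $K$ furnishes a cobordism on the other side of $\Sigma_2(K_{2n,1})$. Each of the two full negative twists along disks intersecting $K$ algebraically once and three times, respectively, can be realized by a framed surgery on the meridian of the disk's boundary, and blowing the resulting spheres down produces a smooth concordance from $K$ to a slice knot $K'$ inside the twice-punctured $X:=2\mathbb{CP}^2$, representing the class $(1,3)\in H_2(X,\partial X)\cong\mathbb{Z}\oplus\mathbb{Z}$ (cf.\ \cite{ACMPS:2023-1}). Applying the $(2n,1)$-cabling operation fiberwise to this concordance produces a concordance $\Sigma$ from $K_{2n,1}$ to $K'_{2n,1}$ in the same ambient $X$, and since $K'$ is slice, so is $K'_{2n,1}$. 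Taking double branched covers yields a $4$-manifold $\widetilde{X}$ with boundary $\Sigma_2(K_{2n,1})\sqcup -\Sigma_2(K'_{2n,1})$ carrying a natural involution, while the double branched cover $V$ of $B^4$ along a slice disk for $K'_{2n,1}$ is a rational homology ball capping $\Sigma_2(K'_{2n,1})$.

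Gluing produces a closed smooth spin $4$-manifold $Z=W\cup\widetilde{X}\cup V$ with an involution $\tau$. I would then compute $b_2^+(Z)$, $b_2^+(Z/\tau)$, and $\sigma(Z)$ from the signature formulas for double branched covers of cabled concordances, keeping careful track of the fact that the $(2n,1)$-cable of a surface of class $(1,3)$ represents $(2n,6n)$ in the ambient homology. The parity assumption that $n$ is odd enters precisely at this stage: it determines which spin structure on $\Sigma_2(K_{2n,1})$ is compatible with the one coming from $\widetilde{X}$, and hence which real Fr\o yshov invariant is being obstructed. The combined numerical data will then violate the Fr\o yshov-type inequality of Konno--Miyazawa--Taniguchi, provided the real Fr\o yshov invariant of $\Sigma_2(K_{2n,1})$ satisfies an appropriate lower bound depending on $n$.

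The main obstacle, and the principal technical contribution of the paper, is establishing this lower bound on $\delta_R(\Sigma_2(K_{2n,1}))$. For this I would carry out the $O(2)$-equivariant refinement of the Dai--Sasahira--Stoffregen lattice homotopy type announced in the abstract, where the $O(2)$-action is engineered so that the associated equivariant stable homotopy type records the real Seiberg--Witten Floer homotopy type. Reading $\delta_R$ off the resulting model reduces the lower bound to a combinatorial problem on the definite lattice of a plumbed filling of $\Sigma_2(K_{2n,1})$, and careful tracking of the parity of $n$ through this combinatorics supplies the positive lower bound needed to close the contradiction.
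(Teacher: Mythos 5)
Your high-level outline correctly identifies the main ingredients — cabling the twisting concordance inside twice-punctured $2\CP^2$, invoking the Fr\o yshov-type inequality of Konno--Miyazawa--Taniguchi, and developing an $O(2)$-equivariant lattice homotopy type to compute the needed Fr\o yshov invariant. However, there is a genuine and consequential gap in the middle of your argument, and a misidentification of which Fr\o yshov invariant ultimately needs computing.

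The gap is the framing change under cabling. The twisting annulus from $K$ to the slice knot $K'$ represents $(1,3)$ in $H_2(X,\partial X)$, so the $0$-framing of $K$ on one end is taken to the $-10$-framing of $K'$ on the other end ($-10 = -(1^2+3^2)$). Consequently, cabling the concordance carries $K_{2n,1}$ to the $(2n,1)$-cable of $K'$ taken with respect to the $-10$-framing, that is, to $K'_{2n,\,1-20n}$ — \emph{not} to $K'_{2n,1}$ as you assert. Since $K'$ is slice, one can further cable a slice disk to conclude that $K'_{2n,\,1-20n}$ is concordant in $S^3\times I$ to the torus knot $T_{2n,\,1-20n}$, and it is $\overline{\delta}_R(T_{2n,\,1-20n})$ that must be computed, not $\underline{\delta}_R(K_{2n,1})$ or anything about $K'_{2n,1}$. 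Your version, ending at the slice knot $K'_{2n,1}$, would contribute $0$ to the Fr\o yshov inequality and the contradiction would evaporate; in fact, $[S_{2n}]=(2n,6n)$ is exactly the homology class whose square, together with the signature of $T_{2n,\,1-20n}$, produces the negative quantity that drives the argument. Related to this, the place where the parity of $n$ enters is not compatibility of spin structures with $\widetilde{X}$ but rather the Wu-class calculation of $\bar\mu(\Sigma(2,2n,20n-1))$ for the Brieskorn rational homology sphere $\Sigma_2(T_{2n,\,1-20n})$, which the paper carries out via the star-shaped plumbing graph and which yields $\bar\mu=-\tfrac{18}{8}$ for odd $n$; the $O(2)$-lattice machinery then shows $\delta_R=\underline{\delta}_R=\overline{\delta}_R=-\tfrac12\bar\mu$ for even torus knots (\cref{thm:realsphere}). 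Finally, the paper does not glue to a closed $4$-manifold: it chains the relative Fr\o yshov inequality of \cref{Theorem B for links} (applied to the cabled concordance in $X$, where $b_2^+(\Sigma_2(S_n))-b_2^+(X)=1$) with \cref{cor:negativeimmersedpoints}, obtaining $\underline{\delta}_R(K_{2n,1})\le -1$ directly. You should replace the closed-manifold gluing step with this chain, and replace the target of the cabled concordance by $T_{2n,\,1-20n}$, carrying the framing change through the numerics.
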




There are infinitely many knots that satisfy the assumptions of Theorem~\ref{thm:main}. In fact, \cite[Remark 2.6]{ACMPS:2023-1} provides an infinite family of strongly negative-amphichiral knots  meeting the assumptions.
Recall that a knot $K$ is called \emph{strongly negative-amphichiral} if there is an orientation-reversing involution $\tau \colon S^3 \to S^3$ such that $\tau(K)=K$. Since each knot in the family is strongly negative-amphichiral, the $(2n,1)$-cables of these knots are algebraically slice and strongly rationally slice~\cite{Kawauchi:2009-1}. In particular, the usual concordance invariants from knot Floer homology~\cite{ozsvath2004holomorphicknot, Rasmussen:2003-1} (cf. \cite{Hom:2017-1, HKPS:2022-1}) and the concordance invariants $s^\#$, $f_\sigma$, $\tau^\#$, $\nu^\#$, $\tau_I$ \footnote{As it is pointed out in \cite[Remark 1.6]{BS22}, the invariants $\tau^\#$ and $\nu^\#$ vanish for rationally slice knots. In particular, from \cite[Theorem 1.2]{GLW19}, $\tau_I$ also vanishes. 
}, $\wt{s}$, $\Gamma$, $r_s$ from instanton knot Floer theory \cite{KM13, DS19, GLW19, KM21,BS21,  DISST22}, and the concordance invariants $\theta^p$, $q_M$ from equivariant Seiberg--Witten theory \cite{BH21, BH22, IT24} vanish. Moreover, it can also be proved that the $s$-invariant~\cite{Ra10} from Khovanov homology~\cite{Khovanov:2000-1} vanishes (cf. \cite{MMSW:2023-1}). Additionally, we note that the $(2,1)$-cable (i.e., when $n=1$) is the only case where \cite{DKMPS:2022-1, ACMPS:2023-1, KMT23f} can be directly applied.

 Our main tools are the real Fr\o yshov inequalities involving the three concordance invariants
\[
\delta_R(K), \underline{\delta}_R(K), \text{ and  } \overline{\delta}_R (K) \in \frac{1}{16}\Z
\]
which are called {\it real Fr\o yshov invariants}, introduced by Konno, Miyazawa, and the third author in \cite{KMT:2023}. The invariants are defined as certain Fr\o yshov type invariants for the fixed point spectrum of an order $2$ subgroup $\langle I \rangle$ in $O(2)$, acting on the Manolescu's Seiberg--Witten Floer homotopy type \cite{Ma03} of the double-branched cover of a knot $K$:
\[
SWF_R(K) := (SWF(\Sigma_2(K), \fraks_0))^I,
\]
where $\fraks_0$ is the unique spin structure on the double-branched cover $\Sigma_2(K)$.  Note that $SWF_R(K)$ has a $\Z_4$-symmetry, which comes from the $j$-action in $\operatorname{Pin}(2)$. The invariant $\delta_R(K)$ is a $\Z_2$-equivariant Fr\o yshov invariant, which can be seen as an analog of the Heegaard Floer $d$-invariant~\cite{OzSz03b}. The latter two invariants, $\underline{\delta}_R(K)$ and $\overline{\delta}_R(K)$, are $\Z_4$-equivariant Fr\o yshov invariants similar to $\underline{d}$ and $\overline{d}$ in involutive Heegaard Floer theory~\cite{Hendricks-Manolescu:2017-1}. There are several variants of real Seiberg--Witten theory; for examples, see \cite{TW09, Na13, Nak15, Ka22, KMT21, Ji22, KMT:2023, Mi23, Li23, BH24}.

To prove Theorem~\ref{thm:main}, we shall show that if a knot $K$ satisfies the assumptions of the theorem, then $\underline{\delta}_R(K_{2n,1}) < 0$ for each odd $n$. To accomplish this, we make use of a smooth concordance from $K_{2n,1}$ to the torus knot $T_{2n, 1-20n}$ in a twice-punctured $2\mathbb{CP}^2$. This approach simplifies the calculation of $\underline{\delta}_R(K_{2n,1})$ to calculating $\overline{\delta}_R(T_{2n, 1-20n})$. For the computation of $\overline{\delta}_R(T_{2n, 1-20n})$, we develop a theory of the $O(2)$-homotopy type of the Seiberg--Witten Floer spectrum, which we describe below.

We introduce a method to compute both the real and the $O(2)$-equivariant Seiberg--Witten Floer homotopy type for an almost-rational plumbed homology sphere. Our main tool is based on the $\operatorname{Pin}(2)$-equivariant lattice homotopy type, developed by Dai, Stoffregen, and Sasahira~\cite{DSS2023}. Additionally, we develop an $O(2)$-equivariant version of the lattice homotopy type. For a given negative-definite plumbing graph $\Gamma$, the associated plumbed 4-manifold is denoted by $W_\Gamma$, and its boundary is denoted by $Y_\Gamma$. If the plumbing graph $\Gamma$ is \emph{almost-rational} (abbreviated as $AR$, see \cite[Definition 8.1]{Nemethi:2005-1}), then we say that $Y_\Gamma$ is an \emph{almost-rational plumbed homology sphere}. The following theorem enables us to compute the invariants $\delta_R, \underline{\delta}_R$, and $\overline{\delta}_R$ for all torus knots.

 
\begin{thm}\label{main computation}
Let $K$ be a knot in $S^3$ and $\Sigma_2(K)$ be its double-branched cover. Suppose there is an almost-rational plumbing graph $\Gamma$ with a diffeomorphism $\partial W_\Gamma\cong \Sigma_2(K)$, where $W_\Gamma$ denotes the plumbed 4-manifold given by $\Gamma$, such that the deck transformation on $\Sigma_2(K)$ extends smoothly to a smooth involution $\tau$ on $W_\Gamma$. Moreover, assume that there exists an almost $I$-invariant path\footnote{For the definition of {\it almost $I$-invariant path}, see \cref{almost I inv path}.} $\gamma$ that carries the lattice homology of $(\Gamma,\mathfrak{s}_0)$, where $\mathfrak{s}_0$ denotes the unique spin structure on $\Sigma_2(K)$. Then there is an $O(2)$-equivariant map 
    \[
    \mathcal{T}^{O(2)} \colon \mathcal{H} (\gamma,\mathfrak{s}_0) \to SWF(\Sigma_2(K), \fraks_0)
    \]
    which is an $S^1$-equivariant homotopy equivalence with respect to a certain $O(2)$-action on $\mathcal{H} (\gamma,\mathfrak{s}_0)$. Here, $\mathfrak{s}_0$ denotes the unique self-conjugate $\mathrm{spin}^c$ structure on $\Sigma_2(K)$.
\end{thm}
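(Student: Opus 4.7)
The plan is to adapt the $\Pin(2)$-equivariant lattice homotopy type of Dai--Sasahira--Stoffregen~\cite{DSS2023} to the $O(2)$-setting relevant for real Seiberg--Witten theory. The key change is that the $j$-symmetry of the path used in the DSS construction is replaced by the involution $\tau$ on $\Gamma$, and the $\Pin(2)$-equivariance is replaced by $O(2)$-equivariance compatible with the real Seiberg--Witten setup of~\cite{KMT:2023}.

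The first step is to define an $O(2)$-action on the lattice homotopy type $\mathcal H(\gamma, \fraks_0)$. The underlying $S^1$-action is already present from the standard construction. For the additional involution, I would use $\tau$, which acts on the characteristic lattice by permuting basis vectors; composing this permutation with the complex conjugation on the lattice produces an involution whose combination with the $S^1$-action generates an $O(2)$-action. The almost $I$-invariance of $\gamma$ is designed precisely so that this prescription descends cellwise to $\mathcal H(\gamma, \fraks_0)$, yielding a strict (not merely up-to-homotopy) $O(2)$-action extending the $S^1$-action.

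The second step is to identify this combinatorial $O(2)$-action with the analytic one on $SWF(\Sigma_2(K); \fraks_0)$ from~\cite{KMT:2023}. The DSS map $\mathcal T$ is constructed from the relative Bauer--Furuta invariant of the plumbed filling $W_\Gamma$. Since $\tau$ extends to a smooth, $\spc$-preserving involution on $W_\Gamma$ whose restriction to $Y_\Gamma$ is the deck transformation on $\Sigma_2(K)$, the relative Bauer--Furuta invariant is $\tau$-equivariant; combined with charge conjugation, this is exactly the $O(2)$-equivariance in the sense of~\cite{KMT:2023}. The map $\mathcal T^{O(2)}$ is then obtained by promoting $\mathcal T$ to respect both involutions. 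Forgetting the extra involution recovers the DSS map, so the $S^1$-equivariant homotopy equivalence statement is immediate from~\cite{DSS2023}.

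The main obstacle I anticipate is handling the ``almost'' in almost $I$-invariance. The path $\gamma$ is not strictly fixed by the $\tau$-action, and one must verify that the resulting combinatorial involution on $\mathcal H(\gamma, \fraks_0)$ is well-defined on the nose and is compatible with the finite-dimensional approximations of the Seiberg--Witten flow used to define $\mathcal T$. Tracking the $\tau$-action through the cellular decomposition of $\mathcal H(\gamma, \fraks_0)$ attached to $\gamma$, and showing that the discrepancies introduced by non-strict invariance can be absorbed into canonical homotopies respecting both the $S^1$-equivariance of $\mathcal T$ and the involution, will be the technical heart of the proof.
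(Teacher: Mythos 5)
There is a genuine gap in your proposal, centered on your description of the final step as ``tracking the $\tau$-action through the cellular decomposition'' and ``absorbing discrepancies into canonical homotopies.'' This underestimates the actual difficulty. The paper splits $\mathcal{H}(\gamma,\mathfrak{s}_0)$ into a piece $\Gamma_0$ on which $I$ acts freely, and a central piece $\Gamma_\Theta = \mathbb{S}(\mathfrak{s}_{-1}) \cup \mathbb{E}(e_{\mathfrak{s}_{-1},\mathfrak{s}_1}) \cup \mathbb{S}(\mathfrak{s}_1)$ that $I$ flips. On $\Gamma_0$ the Bauer--Furuta maps assemble into an honest $O(2)$-equivariant $\mathcal{T}_0$, but on the central edge the adjunction relation only produces an $S^1$-equivariant map, and there is no direct way to make it $O(2)$-equivariant. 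The resulting $\mathcal{T}$ is only $S^1$-equivariant, and upgrading it to $\mathcal{T}^{O(2)}$ is not a matter of massaging canonical homotopies; it requires a genuine obstruction-theoretic lifting argument.

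Concretely, the paper forms the cofiber $\Theta := \operatorname{Cone}(\mathcal{T}_0 \colon \Gamma_0 \to SWF(Y))$, observes $\Theta$ is $S^1$-equivariantly $\Sigma^{\widetilde{\mathbb{R}}} S^0$, and then carries out a three-step argument adapted from the $\operatorname{Pin}(2)$ case in \cite{DSS2023}: (1) an $\langle I\rangle$-equivariant identification of $\Theta^{S^1}$ with $\Sigma^{\widetilde{\mathbb{R}}} S^0$; (2) construction, for large $p,q$, of an $O(2)$-map $M\colon \Theta \to \Sigma^{q\widetilde{\mathbb{R}}\oplus p\mathbb{C}} SWF(Y)$ inducing an equivalence on $O(2)$-fixed points, which in turn uses a vanishing result $[\Gamma_0, \Sigma^{q\widetilde{\mathbb{R}}\oplus p\mathbb{C}} S^0]_{O(2)}=0$ (via \cite[Proposition 4.2]{Ada84}) together with an auxiliary $O(2)$-map $N\colon SWF(Y) \to \Sigma^{q\widetilde{\mathbb{R}}\oplus p\mathbb{C}} S^0$ built from the Bauer--Furuta invariant of the double-branched cover of a high-genus Seifert surface in $D^4$; and (3) a reduction of $p$ and $q$ to get $M'$ fitting a specific $O(2)$-commutative square, from which the $O(2)$-lift of $\mathcal{T}$ is extracted using $\Sigma^{\mathbb{R}}\Gamma = \operatorname{Cone}(\Sigma^{\widetilde{\mathbb{R}}}S^0 \to \Sigma^{\mathbb{R}}\Gamma_0)$ and $\Sigma^{\mathbb{R}} SWF(Y) = \operatorname{Cone}(\Theta \to \Sigma^{\mathbb{R}}\Gamma_0)$. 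None of this appears in your sketch, so the ``technical heart'' you anticipate is not the heart the proof actually has: the issue is not well-definedness of the action or compatibility with finite-dimensional approximation, but the existence of a lift across a nontrivial equivariant obstruction, which is where the DSS machinery is essential.
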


This can be applied to compute a 2-knot invariant from real Seiberg–Witten theory. In \cite{Mi23}, Miyazawa defined the numerical invariant
\[
|\deg (S)| \in \Z_{\geq 0}
\]
for a smoothly embedded 2-knot $S$ in $S^4$ as the absolute value of the mapping degree of the $\{\pm 1\}$-framed real Bauer–Furuta invariant. Furthermore, in \cite[Proposition 4.25, Lemma 4.27, and Proposition 4.30]{Mi23}, he provided the following formula: 
\begin{align}\label{surgery real}
|\deg \left(\tau_{(k, \alpha)} (K)\right) |= |\deg (K)|
\end{align}
for a determinant one knot $K$ in $S^3$, where $\tau_{(k, \alpha)} (K)$ is the $\alpha$-roll $k$-twisted spun $2$-knot of $K$, and $|\deg (K)|$ is the absolute value of signed counting of $\{\pm 1\}$-framed real Seiberg--Witten solutions on the double-branched cover of $K$ with respect to its unique spin structure.  Since \cref{main computation} enables us to give non-equivariant homotopy type of $SWF_R(K)$, combined with \eqref{surgery real}, we can give a general formula of $\deg \left(\tau_{k, \alpha}(K)\right)$ as follows: 
\begin{cor} \label{Miya_torus}
If $K$ is a determinant one knot in $S^3$ satisfying the same assumptions as in \cref{main computation}, then we have that
\[
|\deg (\tau_{k, \alpha}(K)) | =  |\deg (K) |= \left|\chi ( \mathcal{H} (\gamma ,\mathfrak{s}_0)^I )\right| = 1
\]
for integers $k$ and $\alpha$ such that $\frac{k}{2}+ \alpha$ is an odd integer. 
    
\end{cor}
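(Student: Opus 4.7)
The plan is to read off the corollary as a chain of three equalities, each supplied by an already-stated ingredient of the introduction. The first equality, $|\deg(\tau_{k,\alpha}(K))| = |\deg(K)|$, is an immediate application of Miyazawa's formula \eqref{surgery real}: the hypothesis $\frac{k}{2} + \alpha \in 2\Z + 1$ is precisely the one that puts the pair $(k,\alpha)$ in its range of applicability, and the determinant-one condition on $K$ is the remaining input required by that formula.

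The second equality, $|\deg(K)| = |\chi(\mathcal{H}(\gamma, \mathfrak{s}_0)^I)|$, is where \cref{main computation} enters. By the definition recalled before \eqref{surgery real}, $|\deg(K)|$ is the absolute value of the signed count of $\{\pm 1\}$-framed real Seiberg--Witten solutions on $\Sigma_2(K)$ with $\mathfrak{s}_0$; this signed count equals, up to sign, the reduced Euler characteristic of the fixed-point spectrum $SWF_R(K) = SWF(\Sigma_2(K), \mathfrak{s}_0)^I$, once one compensates for the representation-theoretic suspensions that appear in the real framed Bauer--Furuta construction. Now \cref{main computation} provides an $O(2)$-equivariant map
\[
\mathcal{T}^{O(2)}\colon \mathcal{H}(\gamma,\mathfrak{s}_0) \longrightarrow SWF(\Sigma_2(K),\mathfrak{s}_0)
\]
that is an $S^1$-equivariant homotopy equivalence. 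Since $\mathcal{T}^{O(2)}$ commutes with $I$, passing to $I$-fixed points yields an $S^1/(S^1\cap\langle I\rangle)$-equivariant homotopy equivalence $\mathcal{H}(\gamma,\mathfrak{s}_0)^I \xrightarrow{\ \sim\ } SWF(\Sigma_2(K),\mathfrak{s}_0)^I$, which gives the claimed equality of Euler characteristics.

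The final equality $|\chi(\mathcal{H}(\gamma,\mathfrak{s}_0)^I)| = 1$ follows from the determinant-one hypothesis: $\Sigma_2(K)$ is an integer homology $3$-sphere, so the lattice homology of $\Gamma$ at the unique self-conjugate $\mathrm{spin}^c$ structure $\mathfrak{s}_0$ is a single infinite $U$-tower. Unwinding the cell structure of $\mathcal{H}(\gamma,\mathfrak{s}_0)$ along the almost $I$-invariant path $\gamma$ and intersecting with the $I$-fixed locus reduces $\chi$ to a short alternating cell count on $\gamma^I$ that evaluates to $\pm 1$. The main obstacle is the bookkeeping in the second step: one must carefully track how the $O(2)$-representation-theoretic suspensions built into the $\{\pm 1\}$-framed real Bauer--Furuta map reduce the mapping degree to the equivariant Euler characteristic of the real Floer fixed-point spectrum. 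Once these conventions are pinned down, the three equalities chain together to give \cref{Miya_torus}.
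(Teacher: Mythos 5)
Your chain-of-equalities structure matches the paper's implicit proof (Miyazawa's formula, then Proposition~\ref{deg comp}, then Theorem~\ref{main computation} plus Lemma~\ref{lem:keylemma1}, then a direct inspection of the fixed-point cell structure), but two steps have substantive problems.

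The second equality is where the serious gap lies. You assert that ``since $\mathcal{T}^{O(2)}$ commutes with $I$, passing to $I$-fixed points yields a homotopy equivalence.'' This is not true as stated: an $O(2)$-equivariant map which is only an $S^1$-equivariant (hence non-equivariant) homotopy equivalence need not restrict to an equivalence on $\langle I\rangle$-fixed points --- the homotopy inverse need not respect the $I$-action. The paper handles this by invoking Lemma~\ref{lem:keylemma1}, whose proof goes through the 2-adic comparison map $\left(X^G\right)^\wedge_2 \to \left(X^\wedge_2\right)^{hG}$ and the mod-2 Sullivan conjecture for finite 2-groups, concluding only that $f^I$ induces an isomorphism on $\Z_2$-coefficient homology. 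This is strictly weaker than a homotopy equivalence, but it is all one needs, since the Euler characteristic is independent of the coefficient field. Without this lemma your step does not close.

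For the third equality, the justification is off even though the conclusion is correct. You argue that because $K$ has determinant one, $\Sigma_2(K)$ is an integer homology sphere and the lattice homology of $(\Gamma,\mathfrak{s}_0)$ is a single $U$-tower. This is false in general (even for even torus knots the graded root typically has several branches), and in any case it is not what makes $|\chi(\mathcal{H}(\gamma,\mathfrak{s}_0)^I)|=1$. The actual reason, spelled out in the proof of Theorem~\ref{thm:realsphere}, is the almost $I$-invariant structure of $\gamma = \gamma_0 \cup \gamma_\Theta \cup J\tau\gamma_0$: the $I$-action is free on the spheres and edges of $\gamma_0 \cup J\tau\gamma_0$ (it identifies $\mathbb{S}(\mathfrak{s}_i)$ with $\mathbb{S}(\mathfrak{s}_{-i})$ for $i\ne 0$), while on the central edge it acts by complex conjugation on the spherical factor and by reflection on $[0,1]$, so $\mathcal{H}(\gamma,\mathfrak{s}_0)^I = \Gamma_\theta^I = \left(\left(\C^{\frac{1}{8}(c_1^2(\mathfrak{s}_{-1})-\sigma(W_\Gamma))}\right)^+\right)^I \wedge \{\tfrac12\}$, a single (real) sphere, whence $|\chi|=1$. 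The determinant-one hypothesis is used earlier, to put $K$ within the range of applicability of Miyazawa's formula and to single out $\mathfrak{s}_0$; it does not by itself force the graded root to be a single tower.

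The first equality is cited correctly.
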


We also consider the case when $K$ is an arborescent knot, and in particular a Montesinos knot. We refer the reader to standard textbooks in knot theory, such as \cite{Lickorish:1997}, for precise definitions.

\begin{thm} \label{thm:Montesinos}
    Let $\Gamma$ be a negative-definite almost-rational plumbing graph, and $K$ be the corresponding arborescent knot.
    If $\gamma$ is a path that carries the lattice homology of $(\Gamma,\mathfrak{s})$ for the unique $\mathrm{spin}$ structure $\mathfrak{s}$ on the double-branched covering space  $\Sigma_2(K)$, then there is an $O(2)$-equivariant map
    \[
\mathcal{T}^{O(2)}\colon\mathcal{H}(\gamma,\mathfrak{s})\rightarrow SWF(\Sigma_2(K),\mathfrak{s})
    \]
    which is an $S^1$-equivariantly homotopy equivalence, where the $I$-action on $\mathcal{H}(\Gamma,\mathfrak{s})$ is given by the complex conjugation\footnote{For the definition of the complex conjugation on the $S^1$-equivariant lattice homotopy type, see \Cref{sec:Montesino}.}.
\end{thm}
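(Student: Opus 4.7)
The plan is to reduce the Montesinos case to the setup of \cref{main computation} by exhibiting an explicit almost $I$-invariant path for the branched double cover of an arborescent knot and identifying the corresponding $I$-action with complex conjugation. First I would recall that for the arborescent knot $K_\Gamma$ associated to a negative-definite plumbing tree $\Gamma$, the branched double cover $\Sigma_2(K_\Gamma)$ is diffeomorphic to the Seifert fibered plumbed 3-manifold $Y_\Gamma$, and the covering involution lifts to the plumbed 4-manifold $W_\Gamma$ as the involution that fixes the plumbing graph pointwise; concretely, on the lattice $H_2(W_\Gamma;\Z)$ this lift acts as $-\id$. Under this identification, the real structure on $SWF(\Sigma_2(K_\Gamma);\mathfrak{s})$ corresponds to the standard complex conjugation action on the reducible Coulomb slice, which at the level of Spin-$c$ lattices is precisely the $(-\id)$-action used to set up $\operatorname{Pin}(2)$-equivariance in \cite{DSS2023}.

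The second step is the construction of the map. Since the graph involution is trivial, \emph{every} path $\gamma$ in the lattice $\Z^{|\cV(\Gamma)|}$ carrying the lattice homology of $(\Gamma,\mathfrak{s})$ is tautologically almost $I$-invariant once we declare $I$ to act by $c\mapsto \overline{c}$ on characteristic vectors (i.e., by $c\mapsto -c+2\mathfrak{s}$). One therefore inherits from \cref{main computation}, applied with this $I$-action, an $O(2)$-equivariant map
\[
\mathcal{T}^{O(2)}\colon \mathcal{H}(\gamma,\mathfrak{s})\longrightarrow SWF(\Sigma_2(K_\Gamma);\mathfrak{s}),
\]
and the $S^1$-equivariant homotopy equivalence statement for its underlying $S^1$-map is exactly the Dai--Sasahira--Stoffregen equivalence, which survives this forgetful restriction. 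The only substantive verification is that the $O(2)$-equivariant structure built in the proof of \cref{main computation}--including the $j$-action coming from the $\operatorname{Pin}(2)$-symmetry and the $I$-action from the real structure on spinors--is compatible under the identification of $Y_\Gamma$ with $\Sigma_2(K_\Gamma)$ with the action by complex conjugation on the lattice side.

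The main obstacle is therefore to prove this last compatibility: namely, that the involution on spinors induced by the deck transformation of $\Sigma_2(K_\Gamma)$, restricted to the reducible locus, matches complex conjugation on characteristic vectors on the nose (and not merely up to a gauge transformation that could shift the $I$-grading). I would handle this by working model-theoretically on the Seifert fibered piece: use the preferred framing of the plumbed 4-manifold $W_\Gamma$ to obtain an explicit spin structure $\mathfrak{s}_0$ on $W_\Gamma$ that is preserved by the graph-level involution, and then check that the Dirac operator on $W_\Gamma$ with respect to a round equivariant metric intertwines the geometric involution with the quaternionic antilinear symmetry $J$ whose restriction to the reducible part is complex conjugation. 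Once this local intertwining is in place, the finite-dimensional approximation commutes with both actions by construction, and applying the $O(2)$-equivariant version of the DSS handle-attachment argument developed earlier in the paper yields the desired map and its $S^1$-equivariant invertibility.
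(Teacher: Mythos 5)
Your proposal does not match the paper's argument, and the route you take has a gap. You try to reduce the Montesinos case to \cref{main computation} by declaring any lattice-homology-carrying path to be ``tautologically almost $I$-invariant.'' But the notion of almost $I$-invariant path in \cref{almost I inv path} requires a specific decomposition $\gamma = \{\mathfrak{s}_{-n},\dots,\mathfrak{s}_{-1},\mathfrak{s}_1,\dots,\mathfrak{s}_n\}$ with $\mathfrak{s}_{-i}=\tau^\ast\bar{\mathfrak{s}}_i$, i.e.\ the $I$-action pairs up the spin$^c$ structures on the path and fixes only a central ``cube.'' In the Montesinos situation, because $\tau^*$ acts as $-\id$ on $H_2(W_\Gamma;\Z)$, the $I=j\tau$-action fixes \emph{every} spin$^c$ structure on $W_\Gamma$, so the path is strictly $I$-invariant rather than almost $I$-invariant, and the decomposition $\gamma_0\cup\gamma_\Theta\cup I\gamma_0$ that drives the $\Theta = \operatorname{Cone}(\mathcal{T}_0)$ argument in \cref{main computation} is not available. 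The paper therefore treats the Montesinos case as a genuinely separate (and more direct) construction, not as a corollary of \cref{main computation}.

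The missing ingredient in your proposal is precisely the $O(2)$-equivariant adjunction relation (\cref{adjunction I ver}). Once you associate Bauer--Furuta maps $BF_{W_\Gamma,\mathfrak{s}_i}$ to the vertices of $\gamma$ (each one $O(2)$-equivariant because every $\mathfrak{s}_i$ is $I$-invariant), the substantive issue is producing an $O(2)$-\emph{equivariant homotopy} across each edge $e_{\mathfrak{s}_{i-1},\mathfrak{s}_i}$. In the $S^1$-setting this is the DSS adjunction relation; making it $O(2)$-equivariant requires the degree computation in the proof of \cref{adjunction I ver} (via the equivariant embedding into $\#_{-d}\overline{\mathbb{CP}}^2$ of \cref{equivariant emb} and the classification \cref{TD}), plus the observation that the graph-level involution preserves each $2$-handle core and acts on it as complex conjugation of $\CP^1$ so that the hypotheses of \cref{adjunction I ver} are met. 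Your proposed resolution of the ``main obstacle'' --- checking that the Dirac operator intertwines the geometric involution with a quaternionic antilinear symmetry restricting to conjugation on reducibles --- addresses only the compatibility of the $I$-action on the vertex spheres, which is the easy part; it does not give you $O(2)$-equivariant homotopies for the edges, and without that the map $\mathcal{T}^{O(2)}$ is not defined on all of $\mathcal{H}(\gamma,\mathfrak{s})$.

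Finally, note that the paper does reuse your observation that forgetting the $I$-action recovers the $S^1$-equivariant DSS equivalence; that part of your plan is consistent with the paper's last sentence of the proof. But that is the last step, not the first: one must first have an $O(2)$-equivariant map, and that is what \cref{adjunction I ver} supplies and your proposal omits.
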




\begin{cor} \label{cor:Montesinos}
Let $k$ and $\alpha$ be integers so that $\frac{k}{2} + \alpha$ is odd. 
 Under the same assumptions as in \Cref{thm:Montesinos}, suppose that the lattice homology of $(\Gamma,\mathfrak{s})$ is expressed as a graded root $R$. Denote the sets of leaves and angles of $R$ by $L(R)$ and $A(R)$\footnote{See \cite[Section 4.4]{alfieri2020connected} for the definition of angles in a graded root.}, respectively, and shift the grading (if necessary) so that all vertices of $R$ lie on even degrees. Additionally, we assume the determinant of $K$ is one. 
    Then we have
    \[
     |\deg ( \tau_{k, \alpha}(K) )| = |\deg (K) | = \left\vert \sum_{v\in L(R)} (-1)^{\frac{\mathbf{gr}(v)}{2}} - \sum_{v\in A(R)} (-1)^{\frac{\mathbf{gr}(v)}{2}} \right\vert. 
    \]
\end{cor}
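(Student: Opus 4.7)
The plan is to combine three ingredients: Miyazawa's formula \eqref{surgery real}, the $O(2)$-equivariant homotopy equivalence of \Cref{thm:Montesinos}, and a combinatorial Euler characteristic computation on the graded root $R$. The first equality $|\deg(\tau_{k,\alpha}(K))|=|\deg(K)|$ follows immediately from \eqref{surgery real} under the parity condition on $\tfrac{k}{2}+\alpha$. For the second equality, I would use the fact established in \cite{Mi23} that $|\deg(K)|$ equals the absolute value of the Euler characteristic of the reduced real Seiberg--Witten Floer homotopy type $SWF_R(K):=SWF(\Sigma_2(K),\mathfrak{s}_0)^I$. The determinant-one hypothesis ensures that $\Sigma_2(K)$ is a $\Z$-homology sphere, so the unique self-conjugate $\mathrm{spin}^c$ structure $\mathfrak{s}_0$ coincides with the $\mathfrak{s}$ appearing in \Cref{thm:Montesinos}.

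I would then invoke \Cref{thm:Montesinos} to transfer the computation to the lattice side. The map $\mathcal{T}^{O(2)}$ is $O(2)$-equivariant and is an $S^1$-equivariant homotopy equivalence, which is enough to identify the non-equivariant homotopy types of the two $I$-fixed spectra and hence to equate their Euler characteristics. This yields
\[
|\deg(K)| \;=\; \left|\chi\!\left(\mathcal{H}(\gamma,\mathfrak{s}_0)^I\right)\right|.
\]

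The remaining step is the combinatorial identity
\[
\chi\!\left(\mathcal{H}(\gamma,\mathfrak{s}_0)^I\right) \;=\; \sum_{v\in L(R)}(-1)^{\mathbf{gr}(v)/2} \;-\; \sum_{v\in A(R)}(-1)^{\mathbf{gr}(v)/2}.
\]
To establish this, I would exploit the explicit CW model of the lattice homotopy type from \cite{DSS2023}, on which the $I$-action is realized by complex conjugation. The graded-root presentation $R$ supplies a concrete chain-level description of the lattice homology, and after passing to $I$-invariants (using the symmetric lattice complex formalism in the spirit of \cite{alfieri2020connected}) I expect leaves of $R$ to correspond to even-dimensional $I$-fixed cells and angles to odd-dimensional $I$-fixed cells, each inheriting its grading from $R$. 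Under the shift placing all vertices at even gradings, a cell attached at grading $\mathbf{gr}(v)$ contributes with sign $(-1)^{\mathbf{gr}(v)/2}$, and the extra sign on angles comes from their being one-dimensional; summing gives the stated formula.

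The main obstacle will be this final matching of $I$-fixed cells of $\mathcal{H}(\gamma,\mathfrak{s}_0)$ with the combinatorial data $(L(R),A(R))$ together with the verification of the grading shifts. A natural approach is induction on the number of angles of $R$: the base case is a single tower, which has one leaf, no angles, and Euler characteristic $\pm 1$, consistent with \Cref{Miya_torus}. For the inductive step, grafting a new branch onto $R$ should add one new leaf and one new angle to the $I$-fixed complex, contributing $(-1)^{\mathbf{gr}(L)/2}-(-1)^{\mathbf{gr}(A)/2}$ to the Euler characteristic. Alternatively, one can argue directly via a Lefschetz-type computation on the $I$-equivariant cell structure of $\mathcal{H}(\gamma,\mathfrak{s}_0)$ read off from $R$.
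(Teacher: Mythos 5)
Your overall plan follows the same three ingredients as the paper — Miyazawa's twisted-roll formula, the $O(2)$-equivariant map from \Cref{thm:Montesinos}, and an Euler characteristic computation on the lattice side — and your first step (invoking \eqref{surgery real}) and your appeal to $|\deg(K)|=|\chi(SWF_R(K))|$ (\Cref{deg comp}) are correct. However, the transfer step contains a genuine gap. You claim that $\mathcal{T}^{O(2)}$ being $O(2)$-equivariant and an $S^1$-equivariant homotopy equivalence ``is enough to identify the non-equivariant homotopy types of the two $I$-fixed spectra.'' That statement is false in general: $I$ is not a subgroup of $S^1$, so an $S^1$-equivariant homotopy equivalence gives no direct control over $I$-fixed loci, and an $O(2)$-equivariant map that is merely a non-equivariant homotopy equivalence need not restrict to a homotopy equivalence on fixed points. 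The paper resolves exactly this via \Cref{lem:keylemma1}, a Sullivan-conjecture (Smith theory) argument: for a finite $2$-group $G$, a $G$-equivariant map between finite $G$-complexes that is a non-equivariant homotopy equivalence induces an isomorphism on the $\Z_2$-homology of $G$-fixed-point loci. This yields only a $\Z_2$-homology isomorphism — not a homotopy equivalence — but that suffices for Euler characteristics, since they can be computed with $\Z_2$ coefficients. Without invoking this lemma, your transfer of $\chi$ from $SWF^I$ to $\mathcal{H}(\gamma,\mathfrak{s})^I$ is unjustified.

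Your picture of the combinatorial step is also not quite right. You propose that ``leaves of $R$ correspond to even-dimensional $I$-fixed cells and angles to odd-dimensional $I$-fixed cells,'' but the $I$-fixed locus of $\mathcal{H}(\gamma,\mathfrak{s})$ has a sphere for every lattice point visited by the computation path $\gamma$ (not just those corresponding to leaves of $R$) and a cylinder for every edge of $\gamma$. The paper instead applies a direct Mayer--Vietoris argument to this decomposition to get
\[
\chi\!\left(\mathcal{H}(\gamma,\mathfrak{s})^I\right) \;=\; \sum_{i=1}^{n} (-1)^{k_i^2/2} \;-\; \sum_{j=1}^{n-1} (-1)^{\min(k_j^2,\,k_{j+1}^2)/2},
\]
a sum over all vertices and edges of $\gamma$, which then telescopes down to the leaf-minus-angle formula because the contributions of non-extremal points of the weight sequence cancel in adjacent pairs. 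Your proposed induction on the number of angles would have to reproduce precisely this telescoping, which is subtler than the ``graft a branch, add one leaf and one angle'' heuristic suggests, since grafting a branch introduces many intermediate lattice points into $\gamma$ between the new extrema. The Mayer--Vietoris plus telescoping argument is more direct and avoids having to re-examine the cell structure at each inductive step.
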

Originally, Miyazawa used the computation of Seiberg--Witten moduli spaces using the analytical result given in \cite{MOY96}. Alternatively, \cref{cor:Montesinos} gives a combinatorial computation using the lattice homotopy type \cite{DSS2023} for a certain class of twisted roll spun $2$-knots. 

We also consider an invariant of a $2$-knot or a $\mathbb{RP}^2$-knot $S$ in $S^4$. For simplicity, we assume the double-branched cover of $S$ is homology $\overline{\mathbb{CP}}^2$ in this paper, in order to consider a canonical $\mathrm{spin}^c$ structure up to sign, whose first Chern class is a generator of $H_2(\overline{\mathbb{CP}}^2; \Z)$.
For the strongest invariant in the real setting for a $2$-knot or a $\mathbb{RP}^2$-knot $S$ in $S^4$, we have the $O(2)$-equivariant Bauer--Furuta invariants \footnote{For the construction of $O(2)$-equivariant Bauer--Furuta invariants, see \cref{o(2)BF}. }
\[
BF_S : V^+ \to V^+,
\]
which were introduced in \cite{BH24}, where $V$ denotes an $O(2)$-representation space and $+$ denotes the one-point compactification. If we consider the $\langle I \rangle  \subset O(2)$ fixed point part of $BF_S$, we can recover the Miyazawa's degree invariant.  
We give some structural theorem for $O(2)$-Bauer--Furuta invariant.  
\begin{thm}\label{structual o2}
    For any $2$-knot or $\mathbb{RP}^2$-knot in $S^4$, the $O(2)$-equivariant Bauer--Furuta invariant of it is $O(2)$-stably homotopic to $\pm$ identity up to the coordinate changes of the domain \footnote{For the definition of the coordinate changes, see \cref{coordinate change}.} if Miyazawa's degree invariant is one. 
\end{thm}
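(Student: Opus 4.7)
The plan is to detect the $O(2)$-equivariant stable homotopy class of $BF_S$ via its restrictions to the $\langle I\rangle$- and $S^1$-fixed-point spheres of $V^+$, and then absorb the residual ambiguity by an $O(2)$-equivariant coordinate change on the domain.

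First I would fix the decomposition of $V$ as an $O(2)$-representation, writing $V=V^{S^1}\oplus V'$, where $V^{S^1}$ is a sum of copies of the trivial and sign one-dimensional irreducibles (on the latter, $I$ acts by $-1$), and $V'$ is a sum of two-dimensional weight-$k$ irreducibles on which $I$ acts as a real structure. This pins down the fixed-point spheres $(V^I)^+$ and $(V^{S^1})^+$ and identifies which cells of $V^+$ belong to each isotropy stratum.

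Next I would translate the two inputs available. By construction of Miyazawa's invariant in \cite{Mi23}, the hypothesis $|\deg(S)|=1$ is exactly the statement that
\[
BF_S^I\colon (V^I)^+\to (V^I)^+
\]
is a degree $\pm 1$ self-map of a sphere, hence $\langle I\rangle$-stably homotopic to $\pm\id$. Separately, the $S^1$-fixed-point restriction $BF_S^{S^1}\colon (V^{S^1})^+\to (V^{S^1})^+$ depends only on the reducible Seiberg--Witten locus of the double-branched cover, which is a homology $\overline{\CP}^2$ equipped with its canonical $\mathrm{spin}^c$ structure and the branching involution; a direct model computation identifies $BF_S^{S^1}$ with a standard $S^1$-equivariant equivalence, determined only by the formal dimension data and independent of $S$.

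Finally I would assemble these pieces. Because the only isotropy types of $O(2)$ appearing in the cells of $V^+$ are $\{e\}$, $\langle I\rangle$, $S^1$, and $O(2)$, an $O(2)$-equivariant cellular obstruction argument (equivalently, the tom~Dieck style decomposition of $[V^+,V^+]^{O(2)}$) shows that the indeterminacy in $[V^+,V^+]^{O(2)}$ is controlled by the two fixed-point restrictions above together with a torsion-free contribution from the free stratum. Having pinned down both restrictions to $\pm\id$, any residual discrepancy is realized by an $O(2)$-equivariant linear automorphism of $V'$ (choice of real structures and signs on the weight subspaces), which is exactly a coordinate change of the domain in the sense of \cref{coordinate change}. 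The main obstacle I foresee is this last step: certifying that the coordinate-change group is large enough to kill all $O(2)$-equivariant obstructions invisible on the $S^1$- and $\langle I\rangle$-fixed strata, which will require a careful tracking of the $O(2)$-CW structure of $V^+$ and of the action of coordinate changes on the relevant equivariant obstruction groups.
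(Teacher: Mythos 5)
Your proposal is in the right spirit and takes essentially the same route as the paper, namely reducing the $O(2)$-equivariant stable homotopy class to degree data on fixed-point spheres of $S^1$ and $\langle I\rangle$, then absorbing a residual sign by a coordinate change. However, the ``main obstacle'' you flag at the end is not actually an obstacle, and treating it as unresolved leaves a gap that the paper closes cleanly. The paper invokes \cref{TD} (the equivariant Hopf classification theorem from tom~Dieck, which the appendix sets up): for $O(2)$-representations in the universe $\R^\infty\oplus\wt\R^\infty\oplus\C^\infty$, the set $\Phi(V,W,O(2))$ of relevant isotropy groups—those with $n_V(G)=n_W(G)$ \emph{and} finite Weyl group $WG$—is contained in $\{S^1,\langle I\rangle\}$. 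Crucially, $\{e\}$ has Weyl group all of $O(2)$, which is infinite, so the free stratum contributes no further invariants whatsoever; there is no ``torsion-free contribution from the free stratum'' to worry about once \cref{TD} is in hand. You should say this explicitly rather than leaving it as an open concern, because without it the argument is incomplete.

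A second minor discrepancy: you suggest the coordinate-change ambiguity lives in a large group of $O(2)$-equivariant linear automorphisms of $V'$ (``choice of real structures and signs on the weight subspaces''). The paper's coordinate change is much more modest and concrete—precomposition with the single transposition $(z_1,z_2,z_3,\dots,z_n)\mapsto(z_2,z_1,z_3,\dots,z_n)$ on $\C^n$, as recorded in \cref{coordinate change}. This transposition is $O(2)$-equivariant (it commutes with the diagonal $S^1$-action and with coordinatewise conjugation), fixes $\deg(f^{S^1})$ (there is no $S^1$-fixed sphere affected), and flips the sign of $\deg(f^I)$. So once \cref{TD} reduces everything to the two degrees $\deg(BF_S^{S^1})=+1$ (standard homology orientation) and $\deg(BF_S^I)=\pm 1$ (Miyazawa's hypothesis), a single transposition suffices to normalize the sign, and the conclusion follows. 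If you state \cref{TD} and make this identification of the coordinate change, your proof becomes a complete and correct version of the paper's argument.
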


A similar structural theorem for $S^1\times \Z_p$-equivariant Bauer--Furuta invariants for $2$-knots introduced in \cite{BH21} is also proved in~\cite[Theorem 1.18]{IT24}, based on a similar technique.  

\begin{rem}
    As a refinement of \cref{structual o2}, one can also observe the following:
    For a given pair of $2$-knots or $\mathbb{RP}^2$-knots, suppose that Miyazawa's degree invariants of them are the same, then the corresponding $O(2)$-Bauer--Furuta invariants are $O(2)$-equivariantly stably homotopic up to sign and coordinate change. 
    Note that we can also define the $O(2)$-stable homotopy class of real Bauer--Furuta invariants even for orientable surfaces in $S^4$ by considering their double-branched covers with invariant spin structures with respect to the covering involutions. However, a similar technique proves that if the genus is positive, then the $O(2)$-stable homotopy class of the Bauer--Furuta invariant does not depend on the embeddings. 
\end{rem}

\subsection*{Acknowledgements} 
We express our gratitude to Irving Dai, Matthew Stoffregen, and Hirofumi Sasahira for their invaluable assistance regarding their publication \cite{DSS2023}. We also sincerely thank Jin Miyazawa for his insightful comments on the proof of \cref{deg comp}, and Hokuto Konno and Kouki Sato for the stimulating discussions. Finally, we are grateful to the referees for their careful reading and valuable suggestions.

The second author is partially supported by Samsung Science and Technology Foundation (SSTF-BA2102-02) and the POSCO TJ Park Science Fellowship.
The third author was partially supported
by JSPS KAKENHI Grant Number 20K22319, 22K13921, and RIKEN iTHEMS Program.

\section{Some topological facts}

\subsection{Concordance to torus knots}

In~\cite{ACMPS:2023-1} (see also~\cite{Ballinger:2022-1}), it was observed that the $0$-framed figure-eight knot can be transformed into a $-10$-framed unknot by performing two full  negative  twists, as described in Figure~\ref{fig:figure8}. This observation provided a new proof that the $(2,1)$-cable of the figure-eight knot is not smoothly slice in $B^4$. Furthermore, it will be crucially used in this article.

The $1$-framed red circles in Figure~\ref{fig:figure8} link the $0$-framed figure-eight knot, one linking algebraically once and the other algebraically three times, respectively. This implies that there is a concordance $S$ in
$$X := 2{\mathbb{CP}}^2 \smallsetminus \left(\mathring{B}^4 \sqcup \mathring{B}^4\right) \cong 2{\mathbb{CP}}^2 \# \left(S^3 \times I\right),$$
from the figure-eight knot to the unknot, such that $S$ represents the homology class $(1,3)$ in $H_2(X, \partial X; \mathbb{Z}) \cong H_2(2\mathbb{CP}^2; \mathbb{Z}) = \mathbb{Z} \oplus \mathbb{Z}$. Due to the framing change, applying a cabling operation along the annulus results in a new concordance $S_{2n}$ in $X$ from the $(2n,1)$-cable of the figure-eight knot to the $(2n,1-20n)$-cable of the unknot, namely the $T_{2n,1-20n}$ torus knot. Moreover, $S_{2n}$ represents the homology class $(2n,6n)$ in $H_2(X, \partial X; \mathbb{Z})$. For this, we only needed the fact that the figure-eight knot can be converted into a slice knot by introducing full negative twists along two disjoint disks, one intersecting $K$ algebraically once and the other intersecting it algebraically three times. We record this as a proposition:

\begin{prop}\label{prop:cableof41totorusknot} 
Let $K$ be a knot, such as the figure-eight knot, which can be transformed into a slice knot by applying full negative twists along two disjoint disks--one that intersects $K$ algebraically once and another that intersects it algebraically three times. Then, for each positive integer $n$, there is a smooth concordance $S_{2n}$ in the twice-punctured $2\mathbb{CP}^2$, denoted by $X$, from $K_{2n,1}$ to $T_{2n, 1-20n}$. Moreover, $S_{2n}$ represents the homology class $(2n, 6n)$ in $H_2(X, \partial X; \mathbb{Z})$. \QEDB \end{prop}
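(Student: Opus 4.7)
The plan is to elaborate the construction already sketched above the statement into four steps: (i) realize each of the two full negative twists as the blow-down of a $(-1)$-framed unknot, yielding an annular concordance in $X$ from $K$ to the slice knot $K'$; (ii) cable this annulus with the $(2n,1)$-pattern with respect to its induced framing, tracking the framing shift to identify the other boundary component; (iii) use the sliceness of $K'$ to close off against $T_{2n, 1-20n}$ inside the $S^3 \times I$ summand of $X$; (iv) read off the homology class from the algebraic intersections $1$ and $3$ of the twisting disks with $K$.

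For (i)--(ii), I would let $D_1, D_2 \subset B^4$ denote the two twisting disks, bounded by unknots $U_1, U_2 \subset S^3$ and intersecting $K$ algebraically once and three times, respectively. A full negative twist along $D_i$ is realized by attaching a $(-1)$-framed 2-handle along $U_i$ to $S^3 \times [0,1]$ and blowing it down; performing both twists converts the product annulus $K \times [0,1]$ into a smoothly embedded annulus $S$ in $X \cong 2\mathbb{CP}^2 \# (S^3 \times I)$ from $K$ at one end to $K'$ at the other. The canonical framing of $S$ matches the $0$-framing of $K$ at the bottom, but differs from the $0$-framing of $K'$ at the top by $-1^2 - 3^2 = -10$, since a full negative twist along a disk of algebraic intersection $\ell$ with the knot shifts the Seifert framing by $-\ell^2$. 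Applying the $(2n,1)$-satellite operation to $S$ with respect to this induced framing then produces an embedded annulus in $X$ whose boundaries are $K_{2n,1}$ at the bottom and the $(2n, 1 + 2n(-10)) = (2n, 1-20n)$-cable of $K'$ at the top.

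For (iii), I would use the hypothesis that $K'$ is slice: any slice concordance $K' \leadsto U$ cables to a concordance $K'_{2n, 1-20n} \leadsto U_{2n, 1-20n} = T_{2n, 1-20n}$. Placing this concordance inside the $S^3 \times I$ summand of $X$ and gluing it to the annulus of step (ii) along $K'_{2n, 1-20n}$ produces the desired concordance $S_{2n}$ from $K_{2n,1}$ to $T_{2n, 1-20n}$. For (iv), note that $H_2(X, \partial X; \mathbb{Z}) \cong \mathbb{Z}^2$ is generated by the hyperplane classes $H_1, H_2$ of the two $\mathbb{CP}^2$ summands, and the algebraic intersection of $S$ with $H_i$ equals $1$ or $3$ respectively, since each strand of $K$ piercing $D_i$ contributes one transverse intersection with the exceptional sphere after blow-down. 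The $(2n,1)$-satellite operation multiplies each winding number by $2n$, giving $[S_{2n}] = (2n, 6n)$; the portion of $S_{2n}$ lying in the $S^3 \times I$ summand contributes nothing.

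I do not expect any single genuinely hard step: the proposition is a topological bookkeeping exercise. The one point demanding real care is the framing shift $-10 = -1^2 - 3^2$ and its resulting reparametrization of the cable pattern from $(2n,1)$ to $(2n, 1-20n)$, both of which follow directly from the linking matrix of the $(-1)$-framed surgery description of the two twists.
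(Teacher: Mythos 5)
Your proposal follows essentially the same route the paper intends (and leaves largely implicit): realize the two full negative twists as blow-downs producing an annulus in $X$ from $K$ to a slice knot $K'$, cable that annulus with the $(2n,1)$-pattern using the framing change $-1^2-3^2=-10$, glue in the cabled slice concordance $K'_{2n,1-20n}\leadsto T_{2n,1-20n}$ inside the $S^3\times I$ summand, and read off the homology class from algebraic intersections with the exceptional spheres. The one slip is a sign on the blow-up framing: a full \emph{negative} twist arises from blowing down a $(+1)$-framed unknot (contributing a $\mathbb{CP}^2$ summand and shifting the framing by $-\ell^2$), not a $(-1)$-framed one; since you correctly use $-\ell^2$, the hyperplane classes of $\mathbb{CP}^2$, and the ambient $2\mathbb{CP}^2$ everywhere else, this is a notational slip rather than a substantive error, and the rest of the argument is sound.
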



\begin{figure}[h]
\centering
\labellist
\pinlabel $0$ at 85 100
\pinlabel $0$ at 227 100
\pinlabel $-10$ at 347 100
\pinlabel \textcolor{red}{$1$} at 205 68
\pinlabel \textcolor{red}{$1$} at 197 33
\endlabellist
\includegraphics[width=.9\linewidth]{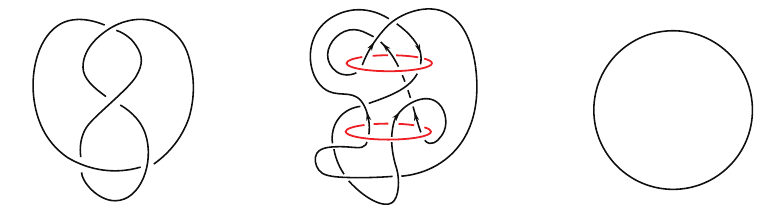}
\caption{The $0$-framed figure-eight knot becomes the $-10$-framed unknot after two full negative twists.}
\label{fig:figure8}
\end{figure}

\subsection{Topological invariants for torus knots}\label{subsec:topinvs}
The signature of a positive torus knot $T_{p,q}$ is computed via the following recursive formulae~\cite[Theorem 5.2]{Gordon-Litherland-Murasugi:1981-1}; note that we are using the convention where positive torus knots have negative signature. When $2q<n$, we have
\[
\sigma(T_{n,q}) =\begin{cases}
 \sigma(T_{n-2q,q})-q^2 +1 & \text{if $q$ is odd}\\
\sigma(T_{n-2q,q})-q^2 &  \text{if $q$ is even.}
\end{cases}
\]
When $q \le n < 2q$, we have
\[
\sigma(T_{n,q}) =\begin{cases}
  - \sigma(T_{2q-n,q}) - q^2 +1  & \text{if $q$ is odd}\\
- \sigma(T_{2q-n,q}) - q^2 +2 &  \text{if $q$ is even.}
\end{cases}
\]
Using this formula, we compute the signature of $T_{2n,1-20n}$ as follows.
\[
\begin{split}
    \sigma(T_{2n,1-20n}) &= -\sigma(T_{20n-1,2n}) \\
    &= -\sigma(T_{4n-1,2n}) + 16n^2 \\
    &= -2 + 20n^2 + \sigma(T_{1,2n}) = -2+20n^2.
\end{split}
\]

Now we compute the Neumann-Siebenmann invariant $\bar{\mu}$~\cites{Neumann:1980-1, Siebenmann:1980-1} of the double-branched cover of $S^3$ along $T_{2n,1-20n}$, which is the rational Brieskorn sphere $\Sigma(2,2n,1-20n)$ with respect to its unique spin structure. Since $\bar{\mu}$ satisfies $\bar{\mu}(-Y) = -\bar{\mu}(Y)$, we will instead compute $\bar{\mu}(\Sigma(2,2n,20n-1))$.

To compute it, we follow \cite{Neumann-Raymond:1978-1}. We first represent $\Sigma(2,2n,20n-1)$ as the boundary of a plumbed 4-manifold. One can do this by first representing it as a Seifert manifold and then translating each singular fiber as a leg in a star-shaped plumbing graph. To do so, we first write down the circle action on $\Sigma(2,2n,20n-1)$, which is given as:
\[
t\cdot (z_1,z_2,z_3) = \left(t^{n(20n-1)}z_1,t^{20n-1}z_2,t^{2n}z_3\right).
\]
It is then clear that, when $n > 1$, the above action has three singular orbits, with Seifert coefficients given by $(20n-1,-20n+11)$, $(20n-1,-20n+11)$, and $(n,-1)$, respectively. Note that the $(n,-1)$ orbit becomes nonsingular when $n = 1$, resulting in only two singular orbits.

Now, we can draw a plumbing graph $\Gamma$ such that the boundary of the 4-manifold obtained by plumbing disk bundles corresponding to $\Gamma$ is $\Sigma(2,2n,20n-1)$. Recall that a singular orbit of Seifert coefficient $(p,q)$ contributes to a leg of the form $[a_1,\ldots,a_n]$ in the resulting star-shaped plumbing graph, where $a_1,\ldots,a_n$ satisfy $a_i \leq -2$ and are uniquely determined by the continued fraction expansion
$$\frac{p}{q} = a_1 - \cfrac{1}{a_2 - \cfrac{1}{\ddots - \cfrac{1}{a_n}}}$$
of $p/q$. When $n>1$, we obtain a graph with three legs, given by 
\[
[\underbrace{-2,\ldots,-2}_{2n-2},-3,\underbrace{-2,\ldots,-2}_{8}],\qquad [\underbrace{-2,\ldots,-2}_{2n-2},-3,\underbrace{-2,\ldots,-2}_{8}],\qquad \text{ and }\qquad [-n].
\]
Moreover, the central vertex has a coefficient of $-2$. Hence the plumbing graph is given as follows.
\[
\begin{tikzpicture}[xscale=1.5, yscale=1, baseline={(0,-0.1)}]
    \node at (-0.1, 0.3) {$-2$};
    \node at (-1, 0.3) {$-n$};
    
    \node at (1, 1.3) {$-2$};
    \node at (2.5, 1.3) {$-2$};
    \node at (3.5, 1.3) {$-3$};
    \node at (4.5, 1.3) {$-2$};
    \node at (6, 1.3) {$-2$};

    \node at (1, -0.7) {$-2$};
    \node at (2.5, -0.7) {$-2$};
    \node at (3.5, -0.7) {$-3$};
    \node at (4.5, -0.7) {$-2$};
    \node at (6, -0.7) {$-2$};
    
    \node at (0, 0) (A0) {$\bullet$};
    \node at (1, 1) (A1) {$\bullet$};
    \node at (1, 0.8) (A1b) {};
    \node at (2.5, 1) (A2) {$\bullet$};
    \node at (2.5, 0.8) (A2b) {};
    \node at (3.5, 1) (A3) {$\bullet$};
    \node at (4.5, 1) (A4) {$\bullet$};
    \node at (4.5, 0.8) (A4b) {};
    \node at (6, 1) (A5) {$\bullet$};
    \node at (6, 0.8) (A5b) {};

    \node at (-1, 0) (B1) {$\bullet$};
    
    \node at (1, -1) (C1) {$\bullet$};
    \node at (1, -1.2) (C1b) {};
    \node at (2.5, -1) (C2) {$\bullet$};
    \node at (2.5, -1.2) (C2b) {};
    \node at (3.5, -1) (C3) {$\bullet$};
    \node at (4.5, -1) (C4) {$\bullet$};
    \node at (4.5, -1.2) (C4b) {};
    \node at (6, -1) (C5) {$\bullet$};
    \node at (6, -1.2) (C5b) {};
    
    \draw (A0) -- (A1);
    \draw[dotted] (A1) -- (A2);
    \draw (A2) -- (A3);
    \draw (A3) -- (A4);
    \draw[dotted] (A4) -- (A5);
    \draw (A0) -- (B1);
    \draw (A0) -- (C1);
    \draw[dotted] (C1) -- (C2);
    \draw (C2) -- (C3);
    \draw (C3) -- (C4);
    \draw[dotted] (C4) -- (C5);

    \draw [line width=0.4mm, decorate, decoration = {calligraphic brace,mirror}] (A1b) --  (A2b) node[midway,yshift=-0.8em]{$2n-2$};
    \draw [line width=0.4mm, decorate, decoration = {calligraphic brace,mirror}] (C1b) --  (C2b) node[midway,yshift=-0.8em]{$2n-2$};
    \draw [line width=0.4mm, decorate, decoration = {calligraphic brace,mirror}] (A4b) --  (A5b) node[midway,yshift=-0.8em]{$8$};
    \draw [line width=0.4mm, decorate, decoration = {calligraphic brace,mirror}] (C4b) --  (C5b) node[midway,yshift=-0.8em]{$8$};
    
\end{tikzpicture}
\]
On the other hand, when $n=1$, we only have two identical legs, given by $$[-3,\underbrace{-2,\ldots,-2}_{8}],$$ and the central vertex has coefficient $-1$. Hence, in this case, the plumbing graph is given as follows.

\[
\begin{tikzpicture}[xscale=1.5, yscale=1, baseline={(0,-0.1)}]
    \node at (0, 0.3) {$-1$};
    \node at (1, 0.3) {$-3$};
    \node at (2, 0.3) {$-2$};
    \node at (3.5, 0.3) {$-2$};
    \node at (-1, 0.3) {$-3$};
    \node at (-2, 0.3) {$-2$};
    \node at (-3.5, 0.3) {$-2$};
    
    \node at (0, 0) (C) {$\bullet$};
    \node at (1, 0) (A0) {$\bullet$};
    \node at (2, 0) (A1) {$\bullet$};
    \node at (2, -0.2) (A1b) {};
    \node at (3.5, 0) (A2) {$\bullet$};
    \node at (3.5, -0.2) (A2b) {};
    \node at (-1, 0) (B0) {$\bullet$};
    \node at (-2, 0) (B1) {$\bullet$};
    \node at (-2, -0.2) (B1b) {};
    \node at (-3.5, 0) (B2) {$\bullet$};
    \node at (-3.5, -0.2) (B2b) {};
    
    \draw (C) -- (A0);
    \draw (A0) -- (A1);
    \draw[dotted] (A1) -- (A2);
    \draw (C) -- (B0);
    \draw (B0) -- (B1);
    \draw[dotted] (B1) -- (B2);

    \draw [line width=0.4mm, decorate, decoration = {calligraphic brace,mirror}] (A1b) --  (A2b) node[midway,yshift=-0.8em]{$8$};
    \draw [line width=0.4mm, decorate, decoration = {calligraphic brace,mirror}] (B2b) --  (B1b) node[midway,yshift=-0.8em]{$8$};
    
\end{tikzpicture}
\]

Given these plumbing graphs, it is now easy to compute $\bar{\mu}$ using the formula
\[
\bar{\mu}(Y)= \frac{\sigma(\Gamma) - w^2}{8}
\]
where $\Gamma$ is a plumbing graph for $Y$, $\sigma(\Gamma)$ is the signature of $\Gamma$, and $w$ is the spherical Wu class. When $n$ is odd and $n>1$, using the plumbing graph from above, we find that $\sigma(\Gamma)=-4n-16$, and $w$ satisfies $w^2 = -4n+2$. For $n=1$, we have $\sigma(\Gamma)=-18$ and $w=0$. Therefore, we find:
\[
\bar{\mu}(\Sigma(2,2n,20n-1)) = -\frac{18}{8},
\]
for each positive odd integer $n$.



\begin{rem}
We do a brief sanity check here in the case $n=1$. Since $T_{2,19}$ is a 2-bridge knot, $\Sigma(2,2,19)$ is a lens space, so we should have 
\[
\bar{\mu}(\Sigma(2,2,19)) = \frac{\sigma(T_{2,19})}{8}.
\]
Since $T_{2,19}$ has signature $-18$, we see that our computation is correct for $n=1$.
\end{rem}

\section{Review of the $\delta_R$ invariant and the case $n=1$}

\subsection{Category of spectrums for $SWF_R(K)$}
In this section, we introduce a category $\mathfrak{C}_{G}$ that contains the real stable equivariant Floer homotopy type $SWF_R(K)$ for a knot $K$ in $S^3$. For a finite-dimensional vector space $V$, let $V^+$ be the one-point compactification of $V$. We define the group $G$ to be the cyclic group of order $4$ generated by $j \in \operatorname{Pin}(2)$, i.e., 
\[
G = \Z_4 = \{1, j, -1, -j\} \text{ with a subgroup } H = \Z_2 =  \{1,-1\} \subset G. 
\]
We will use the following representations of $G$:
\begin{itemize}
    \item  $\tilde{\R} :$  the 1-dimensional real representation space of $G$ defined by the surjection $G \to \Z_2=\{1,-1\}$ with $j \mapsto -1$, 
    \item $\C :$ the complex $1$-dimensional representation defined by assigning $j \in G$ to $i$ in $\C$. 
\end{itemize}
As representations for the suspensions, we shall only use subspaces of 
$\mathcal{V} = \oplus_{\N} \tilde{\R}$ and $\mathcal{W} = \oplus_{\N} \C$. A pointed finite $G$-CW complex $X$ is called a {\it space of type $(G, H)$-SWF}, if 
 $X^{H}$ is $G$-homotopy equivalent to $V^+$, where $V$ is a finite dimensional subspace of $\mathcal{V}$, and  $H$ acts freely on $X \smallsetminus X^{H}$. 
The dimension $\dim V$ is called the {\it level} of $X$.  

Now we introduce the category $\mathfrak{C}_{G}$ whose object is the equivalence classes of $(X,m,n)$ up to $G$-stably equivalence, where  $X$ is a space of type $(G, H)$-SWF, $m \in \Z$, and $n \in \Q$. We say that $(X,m,n)$ and $ (X',m',n')$ are {\it $G$-stably equivalent} if $n-n' \in \Z$ and there exist finite dimensional subspaces $V, V' \subset \mathcal{V}$  and $W, W' \subset \mathcal{W}$ and a pointed $G$-homotopy equivalence
\[
\Sigma^{V} \Sigma^{W}X
\to \Sigma^{V'} \Sigma^{W'}X',
\]
where $\dim_\R V -\dim_\R V' = m'-m$ and $\dim_{\C} W -\dim_{\C} W' = n'-n$.

Informally, we may think of the triple $(X,m,n)$ as the formal desuspension of $X$ by $V$ and $W$, where $V \subset \mathcal{V}$ and $W \subset \mathcal{W}$ with $\dim V=m$ and $\dim W=n$. So, symbolically one may write
\[
(X,m,n) =
\Sigma^{-m\tilde{\R}}\Sigma^{-n\C}X.
\]

Let $(X,m,n)$ and $(X',m',n')$ be triples as above. 
A $G$-stable map $(X,m,n) \to (X',m',n')$ is called a {\it $G$-local map}, if $\dim_{\R}V - \dim_{\R}V'=m'-m$ and it induces a $G$-homotopy equivalence on the $H$-fixed-point sets.
We say that $(X,m,n)$ and $(X',m',n')$ are {\it $G$-locally equivalent} if there exist $G$-local maps $(X,m,n) \to (X',m',n')$ and $(X',m',n') \to (X,m,n)$. 
The invariants $\delta_R$  $\overline{\delta}_R$ and $\underline{\delta}_R$ are invariant under  $G=\mathbb{Z}_4$ local equivalence. 
By considering the action comes from the inclusion $\Z_2 \to \Z_4$, we have the corresponding representations: 
\begin{itemize}
    \item the trivial representation $\R$
    \item 
    the non-trivial real representation $\tilde{\R}$. 
\end{itemize} 
With these representations, we also define $\mathfrak{C}_{\Z_2}$.

\subsection{The real Fr\o yshov invariants}\label{realFroyshov}

In this subsection, we review the construction of the real Fr\o yshov invariants. In~\cite{KMT:2023}, the three invariants 
\[
\delta_R (K), \bar{\delta}_R(K),  \text{ and  }\underline{\delta}_R(K)\in  \frac{1}{16}\Z 
\]
are introduced for a knot $K$ in $S^3$. In fact, it is defined for any oriented link in $S^3$ with non-zero determinant. In the case of the knot, the invariants are independent of the choice of orientations. These invariants are derived from $\Z_4$-equivariant stable homotopy type 
\[
SWF_R (K).  
\] Let $\mathfrak{s}_0$ be the unique spin structure on the double-branched cover $\Sigma_2(K)$, $\tau \colon \Sigma_2(K) \to \Sigma_2(K)$ be the deck transformation, and $P$ be the principal $\operatorname{Spin}(3)$ bundle for $\mathfrak{s}_0$. Since the fixed point set is codimension $2$, we can take an order $4$ lift $\wt{\tau} \colon P \to P$ of the induced map $$\tau_* \colon SO(T\Sigma_2(K)) \to SO(T\Sigma_2(K)),$$ where $SO(T\Sigma_2(K))$ is the orthonormal framed bundle of $\Sigma_2(K)$ with respect to a fixed invariant metric $g$ on $\Sigma_2(K)$. Then, we have the infinite-dimensional functional 
\[
CSD \colon \mathcal{C}_K := \left(i\ker d^ *\subset  i\Om^1_{\Sigma_2(K)} \right)  \oplus \Gamma (\mathbb{S}) \to \R
\]
called the {\it Chern--Simons Dirac functional}, where $\mathbb{S}$ is the spinor bundle with respect to $\fraks_0$ and $\Gamma (\mathbb{S})$ denotes the set of sections of $\mathbb
{S}$. The Seiberg--Witten Floer homotopy type is defined as the Conley index of the finite-dimensional approximation of the formal gradient flow of $CSD$. For that purpose, we describe the formal gradient of $CSD$ as the sum $l+c$, where $l$ is a self-adjoint elliptic part and $c$ is a compact map. Then, we decompose $\mathcal{C}_K$ into eigenspaces of $l$. 
Define $V^{\lambda}_{-\lambda}(K)\oplus W^{\lambda}_{-\lambda}(K)$ to be the direct sums of the eigenspaces of $l$ whose eigenvalues are in $(-\lambda, \lambda]$ and restrict the formal gradient flow $l + c$ to $V^{\lambda}_{-\lambda}(K) \oplus W^{\lambda}_{-\lambda}(K)$, where $V^{\lambda}_{-\lambda}(K)$ is the eigenspace corresponding to the space of 1-forms and $W^{\lambda}_{-\lambda}(K)$ is the eigenspace corresponding to spinors. Then by considering the Conley index $(N, L)$ for $\left(V^{\lambda}_{-\lambda}(K) \oplus W^{\lambda}_{-\lambda}(K) , l+p^{\lambda}_{-\lambda}c\right)$ with a  certain cutting off, we get the Manolescu's Seiberg--Witten Floer homotopy type
\begin{align}\label{usual SWF}
SWF(\Sigma_2(K), \frak{s}_0) := \Sigma^{-V^0_{-\lambda} \oplus W^0_{-\lambda}-n(\Sigma_2(K), \fraks_0, g))\C } N / L,  
\end{align}
where $n(Y, \fraks_0, g)$ is the quantity given in \cite{Ma03} and $g$ is a Riemannian metric on $\Sigma_2(K)$. For the meaning of desuspensions and how to formulate a well-defined homotopy type in a certain category, see \cite{Ma03}. 
For the latter purpose, we take $g$ as $\Z_2$-invariant metric.
Since we are working with the spin structure $\frak{s}_0$, we have an additional $\operatorname{Pin}(2)$-action on the configuration space $\mathcal{C}_K$ which preserves the values of $CSD$.
Now, we define an involution on $\mathcal{C}_K$
\[
I := j \circ \wt{\tau},  
\]
where $j$ is the quaternionic element in $\operatorname{Pin}(2) = S^1 \cup j\cdot S^1$. \footnote{The map $-I$ also induces another real involution on the configuration space. One can easily check the invariants $\delta_R, \underline{\delta}_R$ and $\overline{\delta}_R$ we will focus on in this paper do not depend on such choices. }

Since $I$ also acts on $S$ anti-complex linearly, the lift $I$ is called a \emph{real structure} on $\frak{s}_0$.
Combined with $S^1$-action, we 
can take Conley index so that we have an $O(2)$-action on $SWF(\Sigma_2(K), \frak{s}_0)$.

 Now, we define 
\begin{align*}
SWF_R(K) :&= \Sigma^{-(V^0_{-\lambda} \oplus W^0_{-\lambda} )^I - \frac{1}{2} n (\Sigma_2(K), \fraks_0, g) \C } N^I / L^I \\
& =  \left[\left(N^I/L^I,
\dim_{\R}\left(V^0_{-\lambda}\right)^I,
\dim_{\C}\left(W^0_{-\lambda}\right)^I
+n(Y, \frakt,g)/2\right)\right] \in \mathfrak{C}_{G}
\end{align*}
which we call the \emph{real Seiberg--Witten Floer homotopy type} for $K$. Here, we take an $O(2)$-invariant index pair $(N, L)$ for the flow $\left(V^{\lambda}_{-\lambda}(K) \oplus W^{\lambda}_{-\lambda}(K), l+p^{\lambda}_{-\lambda}c\right)$ with a certain cutting off. 
Since the action of $j$ commutes with $I$, we have a $\Z_4$-action on the stable homotopy types $SWF_R(K)$. Therefore, we have the following two equivariant cohomologies: 
\begin{align*}
&\wt{H}^*_{G} (SWF_R(K); \mathbb{Z}_2):= \wt{H}^{* + \dim (V^0_{-\lambda})^I +2 \dim_{\C}(W^0_{-\lambda})^I  +  n (\Sigma_2(K), \fraks_0, g) }_{G} (N^I/L^I; \mathbb{Z}_2) 
\end{align*}
for $G= \Z_2$ or $\Z_4$, $\Z_2$ denotes the group of the $\{\pm 1\}$-constant gauge transformations on $N^I/L^I$. 
If we write $H^*(B\Z_2) \cong \mathbb{Z}_2 [W]$, 
we define 
\[
\delta_R(K) :=\frac{1}{2}\left( \min \left\{m \in \Z \mid x\in H^m_{\Z_2} (N^I/ L^I; \mathbb{Z}_2), W^k x \neq 0, \forall k \right\}- \dim (V^0_{-\lambda})^I- 2 \dim_{\C}(W^0_{-\lambda})^I - n(\Sigma_2(K), \fraks_0, g)\right). 
\]
 
Similarly, if we put 
\[
\wt{H}_{\Z_4}^\ast (S^0) \cong \Z_2 [U, Q]/(Q^2 = 0),
\]
we can write the definitions of $\underline{\delta}_R$ and  $\overline{\delta}_R$ as 
\begin{align*}
\underline{\delta}_R(K) := & \frac{1}{2} (\min \left\{m \in \Z \mid x\in H^m_{\Z_4} (N^I/L^I; \mathbb{Z}_2), U^k x \neq 0, \forall k, m \equiv \dim (N^I/L^I)^{\Z_2} \operatorname{mod} 2 \right\} 
\\
&-\dim \left(V^0_{-\lambda}\right)^I-  2 \dim_{\C}(W^0_{-\lambda})^I - n(\Sigma_2(K), \fraks_0, g) ) \\
\overline{\delta}_R(K) := & \frac{1}{2} (\min \left\{m \in \Z \mid x\in H^m_{\Z_4} (N^I/L^I; \mathbb{Z}_2), U^k x \neq 0, \forall k , m \equiv \dim (N^I/L^I)^{\Z_2} +1 \operatorname{mod} 2  \right\}
\\
&- \dim \left(V^0_{-\lambda}\right)^I-  2 \dim_{\C}(W^0_{-\lambda})^I - n(\Sigma_2(K), \fraks_0, g)) - \frac{1}{2}.
\end{align*}

\subsection{$O(2)$-equivariant Floer homotopy type}

In this section, we define the $O(2)$-equivariant Floer homotopy type for knots.

We have actions of $I$ and $S^1$ on the Floer homotopy type $SWF(\Sigma_2(K) , \mathfrak{s}_0)$. This gives a well-defined $O(2)$-action on $SWF(\Sigma_2(K) , \mathfrak{s}_0)$ which lies in certain $O(2)$-equivariant stable homotopy category. 
From \cref{O2rep}, and up to base change, the $O(2)$-representations that appear in this setting are the following:
\begin{itemize}
    \item the trivial $1$-dimensional real representation $\R$,
    \item the non-trivial $1$-dimensional real representation $\wt{\R}$ obtained via the surjection
    \[
    O(2) \to O(1) = \Z_2,
    \]
    \item the irreducible $2$-dimensional representation $\C$, equipped with the natural action of $O(2) \cong S^1 \rtimes \Z_2$, where $S^1$ acts by complex multiplication and $\Z_2$ acts by complex conjugation.
\end{itemize}
Accordingly, the universe in this setting is given by
\[
\mathcal{U} = \R^\infty \oplus \wt{\R}^\infty \oplus \C^\infty.
\]
The category $\mathfrak{C}_{O(2)}$ containing the $O(2)$-Floer homotopy type is described as follows:
\begin{itemize}
    \item The objects are tuples $(W, l, m, n)$, where $W$ is a pointed $O(2)$-space, $l, m \in \mathbb{Z}$, and $n \in \mathbb{Q}$.
    \item Given two objects $(W_0, l_0, m_0, n_0)$ and $(W_1, l_1, m_1, n_1)$, the set of morphisms is given by
    \[
    \lim_{p_0, p_1, q \to \infty} \left[\Sigma^{p_0 \R \oplus p_1 \wt{\R} \oplus q \C }W_0,\, \Sigma^{(p_0 + l_0 - l_1) \R \oplus (p_1 + m_0 - m_1) \wt{\R} \oplus (q + n_0 - n_1) \C }W_1 \right]^0_{O(2)}
    \]
    provided that $n_0 - n_1 \in \Z$, where $[X, Y]^0_{O(2)}$ denotes the set of based $O(2)$-equivariant maps up to $O(2)$-equivariant based homotopy.
\end{itemize}

As in the case of $S^1$, we define the (de)suspension by
\begin{align}\label{aaa}
\Sigma^V (W, l, m, n) := \left(\Sigma^{V} W,\, l + 2a,\, m + 2b,\, n +2 c\right)
\end{align}
when $V \cong V_1 \oplus V_2 \oplus V_3$ has a trivialization of the form $\R^a \oplus \wt{\R}^b \oplus \C^c$. The reason why we consider the suspension of the form \eqref{aaa} is the set homotopy classes of identifications $V_1 \cong \R^a, V_2 \cong \wt{\R}^b$ and $V_3 \cong \C^c$ as real representations are identified with $\pi_0 ( GL_a(\R))$, $\pi_0 ( GL_b(\R))$ and $\pi_0 ( GL_c(\R))$ from \cref{auto}. 

Under the setting of \cref{realFroyshov}, we consider Manolescu's Floer homotopy type 
$SWF(\Sigma_2(K), \frak{s}_0)$ with an $O(2)$-action:
\[
\left[
\left(\Sigma^{V^0_{-\lambda} \oplus W^0_{-\lambda} }N/L,\,
0,\,
0,\,
\frac{n(\Sigma_2(K), \frak{s}_0, g)}{2}\right)\right]
\]
We need to confirm the dependence of $SWF(\Sigma_2(K), \frak{s}_0)$
on the $\Z_2$-invariant Riemannian metrics. Since we are considering rational homology 3-spheres, we only need to take into account the spectral flows coming from Dirac operators. One can check the $O(2)$-equivariant version of the spectral flow coincides with the usual $S^1$-equivariant spectral flow. Thus, by employing the same formulation as in Manolescu's work \cite[Section 6]{Ma03}, we can use $n(\Sigma_2(K), \frak{s}_0, g)$ to determine the data for desuspensions and obtain a well-defined $O(2)$-equivariant Seiberg--Witten Floer homotopy type. That is, we simply replace the standard $S^1$-representation $\C$ with the standard $O(2)$-representation $\C$.

There is a functor
\[
\mathfrak{C}_{O(2)} \to \mathfrak{C}_{\Z_2}
\]
defined by taking the $\langle I \rangle$-fixed point part of the spectrum:
\[
[(W, l, m, n)]^I := [(W^I, m, 2n)],
\]
together with the natural restriction maps of equivariant maps. With respect to this functor, the constructions yield the following:

\begin{prop}
For any knot $K$ in $S^3$, we have
\[
SWF_R(K) \cong SWF(\Sigma_2(K), \frak{s}_0)^I. \eqno\QEDB
\]
\end{prop}

\subsection{$O(2)$-equivariant cobordism map}\label{o(2)BF}

In order to calculate the real Seiberg--Witten Floer homotopy type, $SWF_R(K)$, we will construct an $O(2)$-equivariant map. This map is obtained as the $O(2)$-equivariant Bauer--Furuta invariant for the branched covers and the homotopies between them. We review the construction of the $O(2)$-equivariant Bauer--Furuta invariant in this section.  

Let $(Y_0, \frakt_0)$ and $(Y_1, \frakt_1)$ be spin$^c$ rational homology 3-spheres with \emph{odd involutions} $\tau_i \colon Y_i \to Y_i$, i.e., an involution $\tau_i$ such that
\[
\tau_i^* \frakt_i \cong \overline{\frakt}_i 
\]
for each $i =0,1$.
A typical situation involves $Y_0$ and $Y_1$ as the double-branched covers of  knots $K$ and $K'$, each with unique spin structures $\frakt_0$ and $\frakt_1$, respectively. Let $(W,\fraks)$ be a smooth spin 4-dimensional oriented cobordism from $(Y_0, \frakt_0)$ to $(Y_1, \frakt_1)$ with $b_1(W)=0$. We assume that there is an \emph{odd involution} $\tau$ on $W$ such that $\tau|_{Y_i}=\tau_i$ for each $i$, i.e., an involution $\tau$ such that 
\[
\tau ^* \fraks \cong \overline{\fraks} 
\]
and 
the fixed point set of $\tau$ is of codimension $2$. Again, a typical situation is when $W$ is obtained as the double-branched cover along a smoothly embedded surface in a $4$-manifold. 
Let $\mathbb{S}^{\pm}$ be positive and negative spinor bundles on $W$, and let $\mathbb{S}_i$ be the spinor bundles on $Y_i$ for each $i$. In \cite[Section 2]{KMT:2023}, an antilinear lift $I$ on the spinor bundles $\mathbb{S}^{\pm},\  \mathbb{S}_i$, and the configuration spaces are constructed. Note that such a choice (of $I$) corresponds to a choice of splittings of 
\[
1 \to S^1 \to G_\fraks \to \Z_2 \to 1 
\]
as it is pointed out in \cite[Subsection 2.1]{BH24}, where $G_\fraks$ denotes a group of certain bundle maps of the spinor bundle $\mathbb{S}$ on $W$ which covers $\tau$. We fix a splitting when we consider $O(2)$-equivariant Bauer--Furuta invariant. \footnote{It should be possible to write down this dependence on the choices of splittings explicitly. However, we do not need to do it in this paper, so we omit it. }

In this setting, Konno, Miyazawa, and the third author \cite[Section 3.7]{KMT:2023} (see also \cite[Section 2.1]{BH24}) constructed an $I$-equivariant Bauer--Furuta map, which is formally written as 
\begin{align}\label{O2BF}
BF_{W, \fraks} \colon \left(\C^{ \frac{1}{8} (c_1(\fraks)^2 - \sigma(W)) }\right)^+ \wedge  SWF(Y_0, \frakt_0)  \to \left( \R^{b_2^+(W)}\right)^+   \wedge SWF(Y_1, \frakt_1) 
\end{align}
for the 4-manifold $W$ up to stable homotopy, with a certain $I$-action on $\C^{ \frac{1}{8} (c_1(\fraks)^2 - \sigma(W)) }$ and $\R^{b_2^+(W)}$ . 

In this paper, we mainly focus on the case of $b_2^+(W)=0$ in the construction of a map between $O(2)$-lattice homotopy type and the Seiberg--Witten Floer homotopy type. Note that if we forget the $I$-action, $BF_{W, \fraks}$ recovers the usual $S^1$-equivariant Bauer--Furuta invariant. 
Combined it with the $S^1$-action, one can see the map $BF_{W, \fraks}$ is $O(2)$-equivariant since $I$ and $i \in S^1$ anticommute. 
 Consequently, we can define $O(2)$-equivariant Bauer--Furuta invariants of $2$-knots or $\mathbb{RP}^2$-knots as introduced earlier.

Suppose $Y_0$ and $Y_1$ are the double-branched covers along knots $K$ and $K'$, each with unique spin structures $\frakt_0$ and $\frakt_1$, respectively. Assume $W$ is obtained as the double-branched cover along a surface cobordism $S$ from $K$ to $K'$ properly and smoothly embedded in a 4-dimensional cobordism from $S^3$ to $S^3$.
If we consider the $I$-invariant part of $BF_{W, \fraks}$, we obtain a cobordism map in real Seiberg--Witten theory, still denoted by $BF_{W, \fraks}$ if there is no confusion. This cobordism map is used to prove Fr\o yshov type inequalities in \cite{KMT:2023}.

\subsection{The case $n=1$: a toy model}
We now offer an alternative proof of the main theorem for the case of $n=1$, previously established using Heegaard Floer theory in~\cite{DKMPS:2022-1} and minimal genus functions in~\cite{ACMPS:2023-1}. This proof serves as a useful toy model for the case of general odd $n>1$.


Let $K$ be a knot in $S^3$, and let $\Sigma_2(K)$ denote its double-branched cover. Recall from~\cite[Proposition 1.10]{KMT:2023} that when $\Sigma_2(K)$ is a lens space, we have
\[
\delta_R(K)=\underline{\delta}_R(K)=\overline{\delta}_R(K)=-\frac{\sigma(K)}{16},
\]
where $\sigma(K)$ is the signature of $K$. Given that the torus knot $T_{2,-19}$ is a two-bridge knot, and thus its double-branched cover is the lens space $\Sigma(2,2,-19) = L(19,1)$, we deduce
\begin{equation}\label{eq:delta219}
\overline{\delta}_R(T_{2,-19})= -\frac{\sigma(T_{2,-19})}{16} =-\frac{9}{8}.    
\end{equation}

Now, we invoke the following theorem.


\begin{thm}[{\cite[Theorem~1.6]{KMT:2023}}]\label{Theorem B for links}
Let $K$ and $K'$ be knots in $S^3$, let $X$ be an oriented, smooth, compact, connected 4-manifold cobordism from $S^3$ to $S^3$ with $H_1(X; \Z_2)=0$, and let $S$ be a connected surface cobordism that is smoothly embedded in $X$ from $K$ to $K'$, such that the homology class $[S]/2$ in $H_2(X, \partial X; \mathbb{Z})$ reduces to $w_2(X)$. Let $\Sigma_2(S)$ be the double-branched cover of $X$ branched along $S$ and $\sigma(\Sigma_2(S))$ be its signature.


If $b_2^+(\Sigma_2(S)) - b_2^+(X) = 1$, then we have
\begin{align*}
\underline{\delta}_R(K)-\frac{1}{16}\sigma(\Sigma_2(S)) \leq \bar{\delta}_R(K').
\end{align*}
If $b_2^+(\Sigma_2(S)) - b_2^+(X) = 0$, then the following stronger inequality holds:
$$
\underline{\delta}_R(K)-\frac{1}{16}\sigma(\Sigma_2(S)) \leq \underline{\delta}_R(K'). \eqno\QEDB$$
\end{thm}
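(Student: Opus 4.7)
The plan is to lift the cobordism $(X,S)$ to its double-branched cover $W := \Sigma_2(S)$ and apply the $O(2)$-equivariant Bauer--Furuta machinery developed in Section~\ref{o(2)BF}. The divisibility hypothesis $[S]/2 \equiv w_2(X) \pmod{2}$ guarantees a spin structure $\fraks$ on $W$ restricting to the unique spin structures $\frakt_0, \frakt_1$ on $\Sigma_2(K), \Sigma_2(K')$, and the deck involution $\tau : W \to W$ is an odd involution (its fixed set is $S$, which is of codimension $2$). The construction yields a morphism in $\mathfrak{C}_{O(2)}$
\[
BF_{W, \fraks} \colon \left(\C^{-\sigma(\Sigma_2(S))/8}\right)^+ \wedge SWF(\Sigma_2(K),\frakt_0) \longrightarrow \left(\R^{b_2^+(W)}\right)^+ \wedge SWF(\Sigma_2(K'),\frakt_1),
\]
where the complex index collapses to $-\sigma(W)/8$ because $c_1(\fraks) = 0$.

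Next, apply the $\langle I \rangle$-fixed-point functor $\mathfrak{C}_{O(2)} \to \mathfrak{C}_{\Z_2}$ described at the end of Section~\ref{o(2)BF}. On the source, $I$ acts by complex conjugation on each $\C$-factor, so $(\C^n)^I = \R^n$ contributes a trivial real suspension of dimension $-\sigma(\Sigma_2(S))/8$. On the target, $\R^{b_2^+(W)}$ carries the induced action of $\tau^*$ on $H^+(W;\R)$; its invariant part pulls back from $H^+(X)$ and has real dimension $b_2^+(X)$, while the anti-invariant part contributes a copy of the sign representation $\wt{\R}$ of dimension $b_2^+(\Sigma_2(S)) - b_2^+(X)$. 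Hence one obtains a $\Z_2$-local map
\[
\Sigma^{(-\sigma(\Sigma_2(S))/8)\R} \, SWF_R(K) \; \longrightarrow \; \Sigma^{b_2^+(X)\R \; \oplus \; (b_2^+(\Sigma_2(S)) - b_2^+(X))\wt{\R}} \, SWF_R(K').
\]

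The Fr\o yshov-type inequalities then follow by pulling back the relevant $U^k$-tower in $\Z_2$- or $\Z_4$-equivariant cohomology across this local map. Suspension by the trivial real representation $\R$ merely shifts gradings, and once the $\tfrac{1}{2}$ prefactor in the definitions of $\underline{\delta}_R$ and $\overline{\delta}_R$ is accounted for, the signature contribution enters as $\sigma(\Sigma_2(S))/16$. A single suspension by the sign representation $\wt{\R}$, however, reverses the mod-$2$ parity distinguishing $\underline{\delta}_R$ from $\overline{\delta}_R$ in their definitions, which explains why in the case $b_2^+(\Sigma_2(S)) - b_2^+(X) = 1$ one must compare $\underline{\delta}_R(K)$ with $\overline{\delta}_R(K')$, whereas in the case $b_2^+(\Sigma_2(S)) - b_2^+(X) = 0$ the stronger $\underline{\delta}_R(K) \to \underline{\delta}_R(K')$ inequality survives. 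I expect the main obstacle to be the careful bookkeeping of the $I$-action on the obstruction bundle $H^+(W;\R)$ and on the finite-dimensional approximation spaces $V^\lambda_{-\lambda}, W^\lambda_{-\lambda}$, together with verifying that the constructed stable map is a genuine $\Z_2$-local equivalence on $\langle -1 \rangle$-fixed points, which in turn reduces to standard compactness and transversality arguments for the finite-dimensional approximation of the real Seiberg--Witten flow on the cobordism.
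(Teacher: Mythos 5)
This theorem is cited from \cite{KMT:2023} (Theorem~1.6) and is not proven in the present paper, so there is no ``paper's own proof'' to compare against. Judged on its own merits, your blind sketch captures the correct high-level strategy, which is indeed the one outlined at the end of \Cref{o(2)BF}: lift $(X,S)$ to $W = \Sigma_2(S)$, equip $W$ with the spin structure guaranteed by $[S]/2 \equiv w_2(X)$, regard the deck transformation as an odd involution, apply the $O(2)$-equivariant Bauer--Furuta construction to get a morphism in $\mathfrak{C}_{O(2)}$, and pass to $\langle I \rangle$-fixed points.

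However, the middle step of the sketch does not hold together. After applying the $\langle I\rangle$-fixed-point functor, exactly one of the two $O(2)$-irreducible summands of $H^+(W;\R)$ can survive, namely the one on which $I$ acts trivially; the other is killed by $(\cdot)^I$. Your displayed map retains both a $\Sigma^{b_2^+(X)\R}$ and a $\Sigma^{(b_2^+(\Sigma_2(S))-b_2^+(X))\wt{\R}}$ suspension on the target, which is internally inconsistent. Moreover, keeping an $\R^{b_2^+(X)}$ suspension (which, as you say, ``merely shifts gradings'') would produce an extra $b_2^+(X)/2$ correction term in the Fr\o yshov inequality, and no such term appears in the statement of the theorem. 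The root of the issue is that you take the induced action on $H^+(W;\R)$ to be $\tau^*$, but $I = j\circ\widetilde{\tau}$ and the quaternionic element $j\in\operatorname{Pin}(2)$ acts by $-1$ on $i\Omega^1$, hence by $-1$ on $\operatorname{coker}(d^+) = H^+(W)$. Thus $I$ acts by $-\tau^*$ on $H^+(W;\R)$, and the summand surviving $(\cdot)^I$ is the $\tau^*$-anti-invariant one of dimension $b_2^+(\Sigma_2(S)) - b_2^+(X)$. This is precisely what makes $b_2^+(\Sigma_2(S)) - b_2^+(X)$, rather than $b_2^+(X)$, the quantity that distinguishes the two cases of the theorem, and it is also what ensures the final inequality involves only $\sigma(\Sigma_2(S))/16$. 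Finally, the claim that a $\wt{\R}$-suspension ``reverses the mod-$2$ parity distinguishing $\underline{\delta}_R$ from $\overline{\delta}_R$'' is heuristically suggestive but not literally what happens: suspending by $\wt{\R}$ in $\mathfrak{C}_{\Z_4}$ shifts both $\underline{\delta}_R$ and $\overline{\delta}_R$ by $1/2$; the reason the $b_2^+(\Sigma_2(S))-b_2^+(X)=1$ case only yields a comparison of $\underline{\delta}_R(K)$ with $\overline{\delta}_R(K')$ is that the $I$-fixed Bauer--Furuta map then fails to be a genuine local equivalence on $\{\pm1\}$-fixed points (there is a leftover $\R$-factor in the $S^1$-fixed target), which degrades the inequality one can extract from the $U$-tower comparison. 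This bookkeeping needs to be carried out carefully before the sketch becomes a proof.
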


\noindent The latter part is a stronger conclusion since we have $\underline{\delta}_R(K) \leq \delta_R(K) \leq \bar{\delta}_R(K)$ for each knot $K$.

\begin{rem}\label{rem:b2sig} The following can be computed using the Mayer--Vietoris sequence and the $G$-signature theorem (see \cite[Lemma 4.5]{KMT:2023}). Suppose that $S$ is an annulus; then, we have
    \[
    \begin{split}
    b_2^+(\Sigma_2(S)) - b_2^+(X) &= b_2^+(X) -\frac{1}{4} [S]^2  - \frac{1}{2} \sigma (K)+ \frac{1}{2} \sigma (K'), \\
    \sigma(\Sigma_2(S)) &=  2 \sigma (X) - \frac{1}{2} [S]^2- \sigma (K)+\sigma (K').
    \end{split}
    \]
    We will use these to compute the quantities $b_2^+(\Sigma_2(S))$ and $\sigma(\Sigma_2(S))$.
\end{rem}

We have the following immediate corollary of Theorem~\ref{Theorem B for links}.
\begin{cor}\label{cor:negativeimmersedpoints}
    Let $K$ be a knot with vanishing signature. Suppose $K$ bounds a normally immersed disk in $B^4$ with only negative double points. Then, we have $0 \leq \underline{\delta}_R(K)$.
\end{cor}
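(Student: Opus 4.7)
The plan is to apply the $b_2^+(\Sigma_2(S))-b_2^+(X)=0$ case of \cref{Theorem B for links} to a cobordism $(X,S)$ from the unknot $U$ to $K$ built from the immersed disk. Since $\underline{\delta}_R(U)=0$, the desired inequality $\underline{\delta}_R(K)\geq 0$ then reduces to arranging $\sigma(\Sigma_2(S))\leq 0$ in the construction.

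Let $D\subset B^4$ be the immersed disk bounded by $K$ with $k$ negative double points. I would resolve each such double point by a $\mathbb{CP}^2$-blowup (connect sum with $\mathbb{CP}^2$, producing an exceptional sphere $E_i$ with $E_i^2=+1$), rather than the $\overline{\mathbb{CP}}^2$-blowup that is natural for a positive double point. The proper transform $\widetilde D$ is then a smoothly embedded disk in $\widetilde X_0:=B^4\#k\mathbb{CP}^2$ representing the class $[\widetilde D]=-2\sum_{i=1}^k E_i\in H_2(\widetilde X_0,\partial \widetilde X_0)$ with self-intersection $[\widetilde D]^2=4k$. Crucially, $[\widetilde D]/2\equiv w_2(\widetilde X_0)\pmod 2$ with this choice of blowup, so the characteristic-class hypothesis of \cref{Theorem B for links} is met. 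Removing a small open ball $B\subset\widetilde X_0$ centered at an interior point of $\widetilde D$ (chosen small enough that $B\cap\widetilde D$ is a topological disk), I set $X:=\widetilde X_0\smallsetminus B$ and $S:=\widetilde D\smallsetminus(\widetilde D\cap B)$; then $S$ is a smoothly embedded annulus in the cobordism $X$ from $U$ (on $\partial B$) to $K$ (on the original $\partial B^4$).

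Substituting $b_2^+(X)=k$, $\sigma(X)=k$, $[S]^2=4k$, and $\sigma(U)=\sigma(K)=0$ into \cref{rem:b2sig} yields
\[
b_2^+(\Sigma_2(S))-b_2^+(X)=k-\tfrac14(4k)=0,\qquad \sigma(\Sigma_2(S))=2k-\tfrac12(4k)=0,
\]
so the $b_2^+$-difference $=0$ case of \cref{Theorem B for links} produces $0=\underline{\delta}_R(U)-\tfrac1{16}\sigma(\Sigma_2(S))\leq\underline{\delta}_R(K)$. The delicate point, and where care is required, is the choice of $\mathbb{CP}^2$ rather than $\overline{\mathbb{CP}}^2$ blowups for each negative double point: this is what simultaneously makes $[\widetilde D]$ a characteristic class in $\widetilde X_0$ and balances the $b_2^+$ added by the blowups against $\tfrac14[S]^2$ (and $2\sigma(X)$ against $\tfrac12[S]^2$), landing the computation in the strong case of \cref{Theorem B for links} with $\sigma(\Sigma_2(S))=0$. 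Using $\overline{\mathbb{CP}}^2$ blowups instead produces $[\widetilde D]=0$ and fails the $w_2$ condition, which is why the correct orientation of the blowup is essential.
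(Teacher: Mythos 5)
Your proposal is correct and matches the paper's proof: both resolve each negative double point by a $\mathbb{CP}^2$-blowup to produce a smooth annular concordance from the unknot to $K$ inside a twice-punctured $k\mathbb{CP}^2$ with homology class $\pm 2\sum E_i$, then plug into \cref{rem:b2sig} and the $b_2^+(\Sigma_2(S))-b_2^+(X)=0$ case of \cref{Theorem B for links}. The extra detail you give (the $w_2$ check, the orientation of the blowup, the removal of a small ball to create the cobordism) is all correct and simply spells out what the paper leaves implicit.
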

\begin{proof}
    If $K$ bounds a normally immersed disk in $B^4$ with $m$ negative double points, then there is a smooth concordance $S$ in twice-punctured $m\mathbb{CP}^2$, denoted by $X$, from the unknot to $K$. Moreover, $S$ represents $[S] = (2,2, \ldots, 2)$ in $H_2(X, \partial X; \mathbb{Z})$. Moreover, by Remark~\ref{rem:b2sig}, we have that $b_2^+(\Sigma_2(S)) - b_2^+(X)=0$ and $\sigma(\Sigma_2(S))=0$. Then the conclusion follows from Theorem~\ref{Theorem B for links}.
\end{proof}


Let $E$ be the figure-eight knot. Consider the smooth concordance $S$, as described in \Cref{prop:cableof41totorusknot}, from $E_{2,1}$ to $T_{2,-19}$ in a twice-punctured $2\mathbb{CP}^2$, which is denoted by $X$. This concordance has the homology class $(2n,6n)$. To check that the assumptions of \Cref{Theorem B for links} are satisfied for $S$, we calculate:
\[
\begin{split}
    b_2^+(\Sigma_2(S)) - b_2^+(X) 
    &= b_2^+(X) -\frac{1}{4} [S]^2 + \frac{1}{2} \sigma (T_{2,-19})\\
    &= 2-\frac{1}{4}\left(2^2+6^2\right)+\frac{1}{2} (18) \\
    &= 2-10+9 \\&= 1.
\end{split}
\]
Hence the assumptions are satisfied, and thus we get
\[
 \underline{\delta}_R(E_{2,1}) -\frac{1}{16}\left(  2 \sigma (X) - \frac{1}{2} [S]^2 + \sigma (T_{2,-19})\right) \leq \bar{\delta}_R(T_{2,-19}).
\]
Since we have
\[
\begin{split}
    -\frac{1}{16}\left(  2 \sigma (X) - \frac{1}{2} [S]^2 + \sigma (T_{2,-19})\right) &= -\frac{1}{16} \left( 2\cdot 2-\frac{1}{2}(2^2+6^2) + 18  \right) \\
    &= -\frac{1}{8},
\end{split}
\]
use \eqref{eq:delta219} to conclude that 

$$ \underline{\delta}_R(E_{2,1}) \leq -1.$$ Thus, by applying Corollary~\ref{cor:negativeimmersedpoints}, we conclude that $E_{2,1}$ does not bound a normally immersed disk in $B^4$ with only negative double points. In particular, it is not smoothly slice.

\begin{rem}\label{rmk:c4(4121)}
    Consider the unique minimal genus Seifert surface $S$ for the figure-eight knot $E$. It consists of two bands, one with a full positive twist and the other with a full negative twist. Take two parallel copies of $S$ and denote them by $S^+$ and $S^-$. Connecting them with a half-twisted band yields a Seifert surface $S'$ for $E_{2,1}$. Perform a crossing change on $E_{2,1}$ that corresponds to undoing the full positive twist on $S^+$ and a crossing change that corresponds to undoing the full negative twist on $S^-$. These crossing changes produce a new knot $R$ and a Seifert surface $S''$ derived from $S'$ for $R$. Moreover, on $S''$, we have a two-component unlink $U_1 \cup U_2$ such that the Seifert form restricted to the homology classes of the unlink vanishes (i.e., it forms a \emph{derivative link} for $R$), which in particular implies that $R$ is a ribbon knot. In fact, one can check that $R$ is the ribbon knot $\mathrm{12n268}$. Therefore, we conclude that $c_4^+(E_{2,1}) = 1$.
\end{rem}

\subsection{Two technical lemmas}

Before ending this section, we shall show the following lemmas, which will be used later. We say that a spectrum $X$ is a \emph{$\Z_2$-homology sphere} if $\tilde{H}^\ast(X;\Z_2)\cong \pi_\ast(X\wedge H\mathbb{Z}_2)$ is 1-dimensional over $\mathbb{Z}_2$.
\begin{lem}\label{lem:keylemma2}
    If $SWF_R(K)$ is a $\Z_2$-homology sphere, then we have 
    \[
    \delta_R(K) = \underline{\delta}_R (K) = \overline{\delta}_R (K).
    \]
\end{lem}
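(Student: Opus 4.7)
The plan is to exploit the fact that when $X := SWF_R(K)$ is an $\F$-homology sphere, both its $\Z_2$- and $\Z_4$-equivariant Borel cohomologies collapse to a single free tower of rank one, leaving at most two candidate values for each of $\underline{\delta}_R(K)$ and $\overline{\delta}_R(K)$; the general ordering $\underline{\delta}_R(K)\le \delta_R(K)\le \overline{\delta}_R(K)$ recalled earlier in the paper then forces all three invariants to agree.

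First I would let $d$ be the unique degree in which $\tilde H^*(X;\F)\cong \F$ is concentrated. Since $\F$ admits no nontrivial automorphism, the induced actions of $\Z_2$ and $\Z_4$ on $\tilde H^*(X;\F)$ are trivial, so the Borel spectral sequences
\[
E_2^{p,q} = H^p(BG;\F)\otimes \tilde H^q(X;\F) \Longrightarrow \tilde H^{p+q}_G(X;\F)
\]
for $G=\Z_2$ and $G=\Z_4$ have only the row $q=d$ nonzero and therefore collapse at $E_2$. This yields module isomorphisms
\[
\tilde H^*_{\Z_2}(X;\F)\cong \Sigma^d\,\F[W], \qquad \tilde H^*_{\Z_4}(X;\F)\cong \Sigma^d\,\F[U,Q]/(Q^2),
\]
and by freeness every nonzero class is $W$-infinite (resp.\ $U$-infinite).

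Writing $c := \dim (V^0_{-\lambda})^I + 2\dim_{\C}(W^0_{-\lambda})^I + n(\Sigma_2(K),\fraks_0,g)$ for the common shift appearing in all three definitions, I would read off $\delta_R(K)=\tfrac12(d-c)$ directly. In the $\Z_4$-tower, the occupied degrees are $\{d,d+1,d+2,\dots\}$ with exactly one $U$-infinite basis class per degree, so the parity-constrained minima entering $\underline{\delta}_R$ and $\overline{\delta}_R$ are each either $d$ or $d+1$, depending on how $d$ compares modulo $2$ to $\dim(N^I/L^I)^{\Z_2}$. A direct bookkeeping using the defining formulas then gives
\[
\underline{\delta}_R(K)\in\bigl\{\tfrac12(d-c),\ \tfrac12(d-c)+\tfrac12\bigr\}, \qquad \overline{\delta}_R(K)\in\bigl\{\tfrac12(d-c),\ \tfrac12(d-c)-\tfrac12\bigr\}.
\]
Combining with $\underline{\delta}_R(K)\le\delta_R(K)\le\overline{\delta}_R(K)$ rules out the larger value for $\underline{\delta}_R$ and the smaller value for $\overline{\delta}_R$, forcing $\underline{\delta}_R(K)=\delta_R(K)=\overline{\delta}_R(K)$.

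The only delicate point I anticipate concerns the parity condition $m\equiv \dim(N^I/L^I)^{\Z_2}\pmod 2$ built into $\underline{\delta}_R$ and $\overline{\delta}_R$: \emph{a priori} a parity mismatch with $d$ could create a genuine half-integer gap between the three invariants. This is precisely what the ordering inequality eliminates, so no direct parity analysis between $d$ and the level of $X^{\Z_2}$ needs to be carried out.
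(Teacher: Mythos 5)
Your proof is correct, and it improves on the paper's own argument in two small ways worth noting.

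For the collapse step, you observe that the $E_2$ page of the Borel spectral sequence is concentrated on the single row $q=d$ (using that $\mathrm{GL}_1(\F)$ is trivial, so the induced $\Z_2$- and $\Z_4$-actions on $\tilde H^*(X;\F)$ are automatically trivial), so every differential $d_r$ with $r\ge 2$ has zero target and the spectral sequence degenerates for free. The paper instead cites a structure theorem from \cite[Section~3]{KMT:2023} asserting that the $U$-localized $E_\infty$ page is free of rank one over $\Z_2[U,U^{-1},Q]$, and compares that with the $E_2$ page to conclude collapse. Your route is more elementary and does not invoke the localization theorem; both are sound, but the single-row observation makes the degeneration manifest without any auxiliary input.

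For the final step, you give the explicit bookkeeping that the paper leaves implicit. Once the spectral sequence collapses, the occupied degrees in $\tilde H^*_{\Z_4}(X;\F)$ are $d,d+1,d+2,\dots$ with exactly one $U$-torsion-free basis class in each degree, so the parity constraints in the definitions of $\underline{\delta}_R$ and $\overline{\delta}_R$ leave exactly two candidate values apiece, with $\underline{\delta}_R\in\{\delta_R,\delta_R+\tfrac12\}$ and $\overline{\delta}_R\in\{\delta_R-\tfrac12,\delta_R\}$. Invoking the general inequality $\underline{\delta}_R\le\delta_R\le\overline{\delta}_R$ (stated in the paper after \Cref{Theorem B for links}) eliminates the off-cases, which is exactly the clean way to sidestep any need to determine the parity of $d$ relative to $\dim(N^I/L^I)^{\Z_2}$ directly. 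The paper simply asserts "hence we get" the three equalities; your argument fills in precisely why the collapsed module structure forces them, so it serves as a useful elaboration of the same idea.
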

\begin{proof}
    Since Seiberg--Witten spectra are finite, we may assume for simplicity that it is actually a finite CW-complex by stabilizing it many times. Then, since $SWF^I(K)$ is a $\Z_2$-homology sphere, then it is also a $\Z_2$-cohomology sphere. Consider the Serre spectral sequence
    \[
    E_2=\tilde{H}^\ast\left(SWF_R(K);\Z_2\right)\otimes_{\Z_2} H^\ast(B\Z_4;\Z_2)\Rightarrow \tilde{H}^\ast_{\Z_4}\left(SWF_R(K);\Z_2\right)=E_\infty.
    \]
    We already know that the $E_2$ page is free of rank 1 over $H^\ast(B\Z_4;\Z_2)\cong \Z_2[U,Q]/(Q^2)$. On the other hand, it follows from discussions in \cite[Section 3]{KMT:2023} that the $E_\infty$ page, after localizing by formally inverting $U$, is free of rank 1 over $\Z_2[U,U^{-1},Q]$. Therefore we see that the spectral sequence collapses at the $E_2$ page, and hence we get
    \[
    \delta_R(K)=\underline{\delta}_R(K)=\overline{\delta}_R(K)
    \]
    as desired.
\end{proof}

\begin{lem}\label{lem:keylemma1}
    Let $G$ be a finite 2-group and $X,Y$ be finite $G$-CW-complexes. Suppose that there exists a homotopy equivalence $f:X\rightarrow Y$ which is $G$-equivariant; note that $f$ might not be a $G$-equivariant homotopy equivalence. Then the restriction of $f$ to $G$-fixed point loci, i.e.,
    \[
    f^G:X^G\rightarrow Y^G,
    \]
    induces an isomorphism between $\Z_2$-coefficient singular homology.
\end{lem}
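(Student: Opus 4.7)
The statement is a standard piece of Smith theory, and I would deduce it by proving the slightly stronger claim that any $G$-equivariant mod-$2$ homology equivalence between finite $G$-CW-complexes induces a mod-$2$ homology equivalence on $G$-fixed point sets. Stated in this form, it is well-suited to induction on $|G|$: since $G$ is a nontrivial finite $2$-group it has nontrivial center, so I can pick a central subgroup $H\cong\Z_2$. The induced map $f^H:X^H\to Y^H$ remains $G/H$-equivariant (because $H$ is central), and $X^H$, $Y^H$ are naturally finite $G/H$-CW-complexes. Granting the base case, $f^H$ is a mod-$2$ homology equivalence, and applying the inductive hypothesis to $(G/H,X^H,Y^H,f^H)$ yields that $f^G=(f^H)^{G/H}$ is a mod-$2$ homology equivalence as well.

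For the base case $G=\Z_2$, I would pass to the Borel construction $X_{hG}=EG\times_G X$ and $Y_{hG}$, viewed as fibrations over $BG$ with fibers $X,Y$. Since $f$ is $G$-equivariant and a mod-$2$ homology equivalence on the fiber (it is in fact a homotopy equivalence), Zeeman's comparison theorem for the Serre spectral sequence with $\mathbb{F}_2$-coefficients shows that $f$ induces an isomorphism on Borel cohomology $H^*_G(Y;\mathbb{F}_2)\cong H^*_G(X;\mathbb{F}_2)$. Writing $H^*(BG;\mathbb{F}_2)=\mathbb{F}_2[t]$, the Smith localization theorem then says that the inclusion $X^G\hookrightarrow X$ induces an isomorphism $H^*_G(X;\mathbb{F}_2)[t^{-1}]\cong H^*_G(X^G;\mathbb{F}_2)[t^{-1}]$, and likewise for $Y$. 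Naturality of this localization combines these into a commutative square, so $f^G$ induces an isomorphism $H^*_G(Y^G;\mathbb{F}_2)[t^{-1}]\cong H^*_G(X^G;\mathbb{F}_2)[t^{-1}]$. Since $G$ acts trivially on the fixed point sets, $H^*_G(X^G;\mathbb{F}_2)=H^*(X^G;\mathbb{F}_2)\otimes_{\mathbb{F}_2}\mathbb{F}_2[t]$, so after inverting $t$ we get an isomorphism $H^*(X^G;\mathbb{F}_2)\otimes \mathbb{F}_2[t,t^{-1}]\cong H^*(Y^G;\mathbb{F}_2)\otimes\mathbb{F}_2[t,t^{-1}]$ induced by $f^G$. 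Extracting the degree-$0$ part in $t$ yields a cohomology isomorphism, and hence by universal coefficients a mod-$2$ homology isomorphism.

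The main technical point to watch is the naturality of Smith localization and checking that finite-dimensionality of the $G$-CW structures is enough; both hold here, but the cleanest references phrase things in terms of Borel-type equivariant cohomology with Noetherian coefficients, so some bookkeeping is required. I do not expect a deeper obstacle, since the full argument is a textbook application of the Smith--Borel localization package; if desired, one could avoid spectral sequences and localization altogether by arguing cell-by-cell, inducting over the $G$-cells of $X$ and using the Smith--Floyd exact sequence to compare $X^G$ and $Y^G$, but this is strictly more combinatorial.
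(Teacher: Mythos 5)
Your argument is correct, but it takes a genuinely different route from the one in the paper. The paper's proof is a short application of the Sullivan conjecture: one considers the commutative diagram relating $X^G$, the $2$-adic completion $(X^G)^\wedge_2$, and the homotopy fixed points $(X^\wedge_2)^{hG}$, invokes the theorem of Miller, Carlsson, and Lannes to identify the $2$-completed genuine fixed points with homotopy fixed points for finite $G$-complexes, and then uses that $(f^\wedge_2)^{hG}$ is a weak equivalence (since $f$ is a homotopy equivalence) together with the mod-$p$ Whitehead theorem. Your proof instead runs through the classical Smith--Borel package: you first formulate the stronger statement that any $G$-equivariant $\mathbb{F}_2$-homology equivalence induces an $\mathbb{F}_2$-homology equivalence on $G$-fixed points (which is necessary for the induction, since $f^H$ is a priori only an $\mathbb{F}_2$-homology equivalence), reduce to $G = \mathbb{Z}_2$ by choosing a central $\mathbb{Z}_2 \leq G$ (so that $X^H$ and $Y^H$ are finite $G/H$-CW-complexes), and then settle the rank-one case via the Serre spectral sequence comparison for the Borel fibrations and Quillen's localization theorem at the Euler class $t \in H^1(B\mathbb{Z}_2;\mathbb{F}_2)$. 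The faithfully-flat extraction at the end (from $H^*(X^G)\otimes\mathbb{F}_2[t,t^{-1}] \cong H^*(Y^G)\otimes\mathbb{F}_2[t,t^{-1}]$ back to $H^*(X^G)\cong H^*(Y^G)$) is fine because $X^G,Y^G$ are finite complexes and $\mathbb{F}_2[t,t^{-1}]$ is free over $\mathbb{F}_2$. So both proofs are valid; yours trades the black-box strength of the Sullivan conjecture for a longer but more elementary and self-contained localization argument, and has the additional feature of only requiring $f$ to be an $\mathbb{F}_2$-homology equivalence rather than an honest homotopy equivalence.
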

\begin{proof}
    Consider the commutative diagram
    \[
    \xymatrix{
    X^G \ar[d]^{f^G}\ar[r] & \left(X^G\right)^\wedge_2 \ar[d]^{(f^G)^\wedge_2}\ar[r] & \left(X^\wedge_2\right)^{hG}\ar[d]^{(f^\wedge_2)^{hG}} \\
    Y^G \ar[r] & \left(Y^G\right)^\wedge_2 \ar[r]& \left(Y^\wedge_2\right)^{hG} 
    }
    \]
    where $(-)^{hG}$ denotes the homotopy fixed point, i.e.,
    \[
    Z^{hG} = [EG,Z]^G,
    \]
    and $(-)_2^\wedge$ denotes the Bousfield-Kan 2-adic completion. Note that for any $G$-space $Z$, we have a canonically defined 2-adic completion map
    \[
    Z\rightarrow Z^\wedge _2
    \]
    and the (2-completed) comparison map
    \[
    \left(Z^G\right)^\wedge_2 \rightarrow \left(Z^\wedge_2\right)^{hG}.
    \]
    
    But 2-adic completion maps are mod 2 homotopy equivalences. Furthermore, for finite $G$-complexes, the 2-completed comparison map is a weak homotopy equivalence, due to the Sullivan conjecture \cite{dwyer1989fibrewise,carlsson1991equivariant,lannes1992espaces}. By the mod $p$ Whitehead theorem \cite{schiffman1981mod}, this is equivalent to saying that all horizontal maps in the diagram above, as well as $(f^\wedge_2)^{hG}$, induce isomorphisms between $\Z_2$-coefficient homology. Therefore $f^G$ also induces an isomorphism between $\Z_2$-coefficient homology.
\end{proof}

\begin{rem}
    Since completion is a stable operation, by replacing fixed points with geometric fixed points, we can easily see that \Cref{lem:keylemma1} also applies to the case when $X$ and $Y$ are finite $G$-spectra.
\end{rem}

\section{Lattice homotopy type and the proof of \Cref{thm:main}}

Our strategy utilizes the work of Dai, Sasahira, and Stoffregen~\cite{DSS2023} on the lattice homotopy computation of the Floer homotopy type. For background materials on lattice homology, we refer the reader to~\cite{Oz-Sz:2003-1, Nemethi:2005-1, Nemethi:2008-1} for the general theory, and to~\cite{DSS2023} for the modernized constructions. We will mainly follow the notations used in~\cite{DSS2023} for lattice homology. In this section, we construct $O(2)$-equivariant maps between the $O(2)$-lattice homotopy type and the $O(2)$-equivariant Seiberg--Witten Floer homotopy type in two different situations, which can be regarded as morphisms in $\mathfrak{C}_{O(2)}$. Note that these stable equivariant morphisms are assumed to be based maps. However, by~\cite[Chapter II, Lemma (4.15)]{DT87}, there is no distinction between based $O(2)$-equivariant maps and unbased ones between $O(2)$-CW complexes $X$ and $Y$ when $\pi_1(X^{O(2)}) = \pi_1(Y^{O(2)}) = 0$. In our setting, both the $O(2)$-lattice homotopy type and the $O(2)$-Seiberg--Witten Floer homotopy type satisfy this condition, so we will ignore basepoints in the construction.

\subsection{Computation sequences in lattice homology} \label{subsec:computations sequences}
In this subsection, we will review the construction of \emph{computation sequences} in lattice homology, following~\cite{Nemethi:2008-1}, as a detailed understanding of it is crucial in understanding the construction of $j$-action on the Dai--Sasahira--Stoffregen lattice homotopy type. We will then modify it a little bit to constuct $O(2)$-lattice homotopy type in the later subsection.

Given an almost rational, negative-definite plumbing graph $\Gamma$, let $W_\Gamma$ denote the corresponding 4-manifold. Since $H^2(W_\Gamma;\mathbb{Z})$ has no 2-torsion, the first Chern class map
\[
c_1\colon \mathrm{Spin}^c(W_\Gamma)\rightarrow H^2(W_\Gamma;\mathbb{Z})
\]
is injective. Furthermore, its image is the set of \emph{characteristic elements}, i.e., cohomology classes $\alpha\in H^2(W_\Gamma;\mathbb{Z})$ satisfying $\alpha\cup\beta = \beta^2 \pmod 2$ for all $\beta\in H^2(W_\Gamma;\mathbb{Z})$. We will henceforth identify $\mathrm{Spin}^c$ structures on $W_\Gamma$ with characteristic elements of $H^2(W_\Gamma;\mathbb{Z})$. More precisely, given a characteristic element $\alpha$, we will denote the unique $\mathrm{Spin}^c$ structure on $W_\Gamma$ whose $c_1$ is $\alpha$ as $[\alpha]$.

Recall that $W_\Gamma$ is defined via gluing various disk bundles. For each node $v$ of weight $w_v$ in $\Gamma$, we have a disk bundle $p_v\colon E_v \rightarrow S^2$ of Euler number $w_v$, whose total space is embedded in $W_\Gamma$. We denote the zero-section of $p_v$ by $S_v$; clearly, $H_2(W_\Gamma;\mathbb{Z})$ is freely generated by the classes $[S_v]$ where $v$ runs over all nodes of $\Gamma$. Then the weights of nodes in $\Gamma$ give rise to the \emph{canonical class} $K$, which is the unique element of $H^2(W_\Gamma;\mathbb{Z})$ satisfying
\[
K \cap [S_v] = -w_v - 2 \quad \text{for all nodes }  v  \text{ of }\Gamma.
\]
Clearly $K$ is a characteristic element, and any characteristic element can be written uniquely as $K+2\alpha$ for some $\alpha\in H^2(W_\Gamma;\mathbb{Z})$.

Note that $H^2(W_\Gamma;\mathbb{Z})$ can be embedded as a subgroup of $H_2(W_\Gamma;\mathbb{Q})$ via the intersection form of $W_\Gamma$. Consider the cone
\[
S_\mathbb{Q} = \left\{ x\in H_\ast(W_\Gamma;\mathbb{Q}) \mid (x,[S_v])\leq 0 \text{ for all node } v \text{ of }\Gamma \right\}.
\]
It follows from the negative definiteness of $\Gamma$ that every element $x\in S_\mathbb{Q}$ satisfy $x\ge 0$, where $\ge$ denotes the partial ordering on $H_\ast(W_\Gamma;\mathbb{Q})$ defined by inequalities on weights of vertices of $\Gamma$.

We can find a \emph{minimal} representative of $[k]\in \mathrm{Spin}^c(Y_\Gamma)$ as follows: Consider the intersection 
\[
\left(l^\prime + H_2(W_\Gamma;\mathbb{Z})\right)\cap S_\mathbb{Q}.
\]
With respect to the partial ordering on $H_2(W_\Gamma;\mathbb{Z})$, given by 
\[
x\leq y \quad \text{if}\quad (x,[S_v])\le (y,[S_v]) \quad \text{for all node}\quad v\quad \text{of}\quad \Gamma,
\]
this subset admits a unique minimal element $l^\prime_{[k]}$ \cite[Lemma 5.4]{Nemethi:2005-1}. Thus we take the corresponding distinguished representative $k_r$ of $[k]$ as follows:
\[
k_r = K+2l^\prime_{[k]}.
\]

Now fix a vertex $b_o$ among the vertices of $\Gamma$. Then, for each integer $i\ge 0$, we construct a sequence of cycles $x(i)\in H_2(W_\Gamma;\mathbb{Z})$ as the minimal element satisfying the following conditions:
\begin{itemize}
    \item the coefficient of $x(i)$ for the vertex $b_o$ is $i$;
    \item $\left(x(i)+l^\prime_{[k]},b_j\right)\le 0$ for any vertex $b_j \ne b_o$.
\end{itemize}
It follows from \cite[Lemma 7.6]{Nemethi:2005-1} that $x(i)$ is uniquely defined and satisfies $x(i)\ge 0$. Furthermore, every leaf of the graded root $R_\Gamma$ induced by $\Gamma$ contains at least one $x(i)$ \cite[Lemma 9.2]{Nemethi:2005-1}.

We then construct a computation sequence between $x(i)$ and $x(i+1)$ as follows. Set $x_0 = x(i)$ and $x_1 = x(i)+b_o$. Assuming that $x_1,\dots,x_l$ are already constructed, we inductively define $x_{l+1}$ as follows. If $(x_l + l^\prime_{[k]},b_j)\le 0$ for all vertices $b_j \ne b_o$, then we stop, as $x_l=x(i+1)$ is satisfied by \cite[Lemma 7.7]{Nemethi:2005-1}. Otherwise, we take $x_{l+1}=x_l + b_{j(l)}$, where $b_{j(l)}$ is a vertex of $\Gamma$ which is not $b_o$ and satisfies $(x_l + l^\prime_{[k]},b_{j(l)})>0$.

Now we amalgamate computation sequence between $x(i)$ and $x(i+1)$ for each $i\ge 0$ to obtain an infinite sequence of cycles. We can truncate this sequence after sufficiently many terms to get a finite sequence. This sequence is the \emph{computation sequence} for the lattice homology of $Y_\Gamma=\partial W_\Gamma$; more precisely, this sequence \emph{carries} the lattice homology of $Y_\Gamma$ in the sense of~\cite[Theorem 4.9]{DSS2023}.

\begin{rem}
    It is possible to make sense of computation sequences between $x(i)$ and $x(i+s)$ for positive integers $s$, by going from $x(i)$ to $x(i)+sb_o$ by adding one $b_o$ at a time and then applying the same algorithm to go from $x(i)+sb_o$ to $x(i+s)$. If there exists an increasing sequence $0\le i_1 < i_2 < \cdots < i_m$ such that each leaf of the graded root $R_\Gamma$ contains at least one of the cycles $x(i_1),\dots,x(i_m)$, one can generate computations sequences between $x(i_s)$ and $x(i_{s+1})$ and then merge them to obtain a sequence which also carries the lattice homology of $Y_\Gamma$. This observation will be used to construct ``almost $I$-equivariant paths'' in \Cref{subsec: finding almost I-eqv path}.
\end{rem}

\subsection{Review of $S^1$ and $\mathrm{Pin}(2)$-lattice homotopy type} \label{subsec: s1 and pin(2) construction}
In this subsection, we review the construction of the $\operatorname{Pin}(2)$-lattice homotopy type. We will closely follow the arguments of \cite{DSS2023}.

We start by defining the weight function $w$ as follows. Given a $\mathrm{spin}^c$-structure $[k]\in\mathrm{Spin}^c(\partial W_\Gamma)$ and its element $k\in [k]$, we define its weight as 
\[
w(k) = \frac{1}{4}(c_1(k)^2 + n ).
\]
Also, given a pair of elements $k,k^\prime \in [k]$ which differ by $b_j$ for some $j$, we consider the pair as an ``edge'' $e_{k,k^\prime}$ and define its weight as
\[
w(e_{k,k^\prime}) = \mathrm{min}(w(k),w(k^\prime)).
\]
Then, given a sequence $\gamma = (x_1,\dots,x_m) \in (H^2(W_\Gamma; \Z))^m  $ such that for each $i$, $x_i$ and $x_{i+1}$ differ by $b_j$ for some $j$, we consider a CW-complex $\mathcal{F}(\gamma,h)$ for very big positive  even integers $h$ as
\begin{align}\label{cell str of path ht}
\mathcal{F}(\gamma,h) = \left( \left( \bigsqcup_{i=1,\dots,m} \left(\mathbb{C}^{\frac{w(x_i)+h}{2}} \right)^+ \right) \cup \left( \bigsqcup_{i=1,\dots,m-1} \left(\mathbb{C}^{\frac{w(e_{x_i,x_{i+1}})+h}{2}} \right)^+ \wedge [0,1] \right) \right) / \sim,
\end{align}
where we identify all basepoints, and furthermore, the points $x\sim (x,0)$ for $x\in \left(\mathbb{C}^{\frac{w(e_{x_i,x_{i+1}})+h}{2}} \right)^+$, considered as a point in $\left(\mathbb{C}^{\frac{w(x_i)+h}{2}} \right)^+$. We also similarly identify $y\sim (y,1)$ for $y\in \left(\mathbb{C}^{\frac{w(e_{x_i,x_{i+1}})+h}{2}} \right)^+$, considered as a point in $\left(\mathbb{C}^{\frac{w(x_{i+1})+h}{2}} \right)^+$. Then we define the \emph{path homotopy type} of $\gamma$ as the formal de-suspension
\[
\mathcal{H}(\gamma,[k]) = \Sigma^{-\frac{h}{2}\C }\mathcal{F}(\gamma,h),
\]
where this formal desuspension $\Sigma^{-\frac{h}{2}}$ is taken in a certain $S^1$-equivariant stable homotopy category. In our situation, we take it in $\mathfrak{C}_{O(2)}$. 
As the convenient notations, we abbreviate 
\[
\mathbb{S}(x_i) =   \left(\mathbb{C}^{\frac{w(x_i)+h}{2}} \right)^+ \qquad \text{ and }\qquad \mathbb{E}(e_{x_i, x_{i+1}} ) =  \left(\mathbb{C}^{\frac{w(e_{x_i,x_{i+1}})+h}{2}} \right)^+\wedge [0,1] \subset \Sigma^{\frac{h}{2}\C } \mathcal{H}(\gamma,[k]) .  
\]

This spectrum is naturally endowed with an $S^1$-action as follows: $S^1$ acts by complex multiplication on $\mathbb{C}$ and trivially on $[0,1]$. If $\gamma$ is a sequence which carries the lattice homology of $Y_\Gamma$, the homotopy type of $\mathcal{H}(\gamma,[k])$ depends only on the plumbing graph $\Gamma$ and the boundary $\mathrm{spin}^c$ structure $[k]$, and is defined as the \emph{$S^1$-lattice homotopy type} of $(\Gamma,[k])$.

To upgrade the symmetry group from $S^1$ to $\operatorname{Pin}(2)$, under the assumption that $[k]$ is self-conjugate, we have to choose the computation sequence carefully. We say that a computation sequence $\gamma$ is \emph{almost $J$-invariant} if it can be written as an amalgamation of three interior-disjoint paths
\[
\gamma = \gamma_0 \cup \gamma_\Theta \cup J\gamma_0,
\]
where $J$ acts by the negation map, i.e., $k\mapsto -k$ (see \cite[Section 6.1]{DSS2023} for more details). We make a remark that  $\gamma_0$ is a sub-path of $\gamma$ such that $J$-acts on $\gamma_0 \cup J \gamma_0 $ freely.) This negation map has a unique invariant \emph{lattice cube} $\square_J$; the condition here is that $\gamma_\Theta$ should be entirely contained in $\square_J$.


To construct an almost $J$-invariant computation sequence which carries the lattice homology of $Y_\Gamma$, we proceed as follows. We know from \cite[Theorem 1.1]{dai2019involutive} that $J$ acts on the leaves of the graded root $R_\Gamma$ by reflection; it has at most one invariant leaf. If an invariant leaf exists, it is the component containing the $J$-invariant cube $\square_J$. This cube has the following property: for any $\mathrm{spin}^c$ structure $\mathfrak{s}$ which is a vertex of $\square_J$, we have 
\[
c_1(\mathfrak{s}) = \mathfrak{s} - \bar{\mathfrak{s}} = (\text{spherical Wu class of }\Gamma).
\]

Choose a set $S$ of leaves of $R_\Gamma$ so that $S\cap JS=\emptyset$ and $S\cup JS$ is the set of all non-invariant leaves of $R_\Gamma$. Since the spherical Wu class of $W_\Gamma$ is a linear combination (with coefficients 0 or 1) of a subset of nodes of $\Gamma$ which do not contain any pairs of adjacent nodes, we can choose a base node $b_o$ so that every vertex of the cube $\square_J$ has zero coefficient for $b_o$, which implies that $x(0)$, and no other $x(i)$, is contained in $\square_J$. For each leaf $C\in S$, choose an integer $i_C\ge 0$ such that $x(i_C)\in R_\Gamma$, following \cite[Lemma 9.2]{Nemethi:2005-1}. Consider the set 
\[
I = \{ 0 \} \cup \{ i_C \mid C\in S \}
\]
and write it as $I=\{i_1,\dots,i_s\}$, $0=i_1 < \cdots < i_s$. Then one can take computation sequences between $x(i_t)$ and $x(i_{t+1})$ for each $t=1,\dots,s-1$ and amalgamate them to form a path $\gamma_0$. Then, by construction, $\gamma \cap J\gamma = \emptyset$. Then we can choose a path $\gamma_\Theta$ inside $\square_J$ which connects $x(0)$ and $Jx(0)$ and take the amalgamation
\[
\gamma = \gamma_0 \cup \gamma_\Theta \cup J\gamma_0.
\]

Here, $\gamma_\Theta$ is not really a ``path''. It consists of two points, which are a pair of opposite vertices in the invariant lattice cube $\square_J$. Then $\gamma_0$ is a path which starts from $\mathfrak{s}$. Its orbit under the $J$ action, which is conjugation, is $J\gamma_0$, and this path ends at $\mathfrak{s}' = \bar{\mathfrak{s}}$. Such paths are called \emph{almost $J$-invariant paths}, and they are central in the construction of $\operatorname{Pin}(2)$-lattice homotopy type.

We will slightly modify the construction of $\operatorname{Pin}(2)$-lattice homotopy type so that we can represent the action of $I$ on the $O(2)$-equivariant stable homotopy type $SWF(\Sigma_2(K), \mathfrak{s}_0)$. 

\subsection{Involutions on plumbed 4-manifolds and almost $I$-invariant paths}\label{almost I inv path}

Given an almost rational negative-definite plumbing graph $\Gamma$, the associated plumbed 4-manifold $W_\Gamma$, an orientation-preserving involution $\tau$ on $W_\Gamma$ with codimension two fixed point set, and a $\mathrm{spin}$ structure $\mathfrak{s}$ on $\partial W_\Gamma$ satisfying $\tau^\ast \mathfrak{s} = \bar{\mathfrak{s}}=\mathfrak{s} $, an \emph{almost $I$-invariant path} is a sequence of spin$^c$ structures on $W_\Gamma$
\[
\gamma = \{ \mathfrak{s}_{-n},\dots,\mathfrak{s}_{-1},\mathfrak{s}_1,\dots,\mathfrak{s}_n \}
\]
such that the following conditions are satisfied:
\begin{itemize}
    \item $\mathfrak{s}_i \vert_{\partial W_\Gamma} = \mathfrak{s}$ for all $i=1,\ldots,n,-1,\ldots,-n$;
    \item $\mathfrak{s}_{-i}=\tau^\ast\bar{\mathfrak{s}}_i$, for each 
 $i\in \{1,\dots,n$\};
    \item $\mathfrak{s}_{i+1}-\mathfrak{s}_i = PD[S]$, for each 
 $i\in \{1,\dots,n$\} and a sphere $S$ which represents a vertex of $\Gamma$;
 \item $\mathfrak{s}_{-i}-\mathfrak{s}_{-i-1}=PD[S]$, for each 
 $i\in \{1,\dots,n$\} and a sphere $S$ which represents a vertex of $\Gamma$;
    \item $\mathfrak{s}_1-\mathfrak{s}_{-1}=\sum_{S\in \mathcal{S}}PD[S]$ for a finite collection $\mathcal{S}$ of pairwise disjoint smoothly embedded spheres $S\subset W_\Gamma$ with $[S]^2<0$, where $\tau$ fixes $S$ setwise and acts on $S$ by either an orientation-preserving involution (which fixes two points) or identity.
\end{itemize}

Recall that, given a $\mathrm{spin}^c$-structure $\mathfrak{s}$ on $\partial W_\Gamma$, a path of $\mathrm{spin}^c$-structures on $W_\Gamma$ is said to \emph{carry the lattice homology of $(\Gamma,\mathfrak{s})$} if the obvious inclusion map
\[
\mathcal{H}(\gamma,\mathfrak{s})\hookrightarrow \mathcal{H}(\Gamma,\mathfrak{s})
\]
is a chain homotopy equivalence on $S^1$-equivariant Borel chain complexes. We will mainly consider almost $I$-invariant paths which carry the lattice homology of $(\Gamma,\mathfrak{s})$ in the proof of \cref{main computation}. 

\subsection{Construction of $O(2)$-action}\label{subsection:o2equivmap}

Under the above data and assumptions, and the existence of an almost $I$-invariant path $\gamma$, we will construct an $O(2)$-equivariant map 
\[
\mathcal{T}^{O(2)} \colon \mathcal{H}(\gamma, \fraks) \to SWF(Y_\Gamma, \fraks)
\]
which is an $S^1$-equivariantly homotopy equivalence for a given almost $I$-invariant path that carries the lattice homology. 

Due to \cref{O2rep}, we are allowed to choose the following universe for $O(2)$-equivariant Seiberg--Witten theory:
\[
\mathcal{U}=\mathbb{R}^\infty \oplus \tilde{\mathbb{R}}^\infty \oplus \mathbb{C}^\infty.
\]
Note that this universe induces the following universe when we restrict to $S^1$-equivariance:
\[
\mathcal{U}_{S^1}=\mathbb{R}^\infty \oplus \mathbb{C}^\infty.
\]


We suppose our AR-graph 4-manifold $W_\Gamma$ has an orientation-preserving involution $\tau$ with codimension two fixed point set. 
Let us have an almost $I$-equivariant path. 
In this setting, we define a class of $O(2)$-actions on the path homotopy type: 
\[
\Sigma^{\frac{h}{2}\C } \mathcal{H}(\gamma,\mathfrak{s}) =  \left(\mathbb{S}(\fraks_{-n}) \cup \cdots \cup \mathbb{E}(e_{\fraks_{-2} , \fraks_{-1}})\right)  \cup \left(\mathbb{S}(\fraks_{-1})  \cup   \mathbb{E}(e_{\fraks_{-1} , \fraks_{1}})\cup \mathbb{S}(\fraks_1) \right)  \cup \left( \mathbb{E}(e_{\fraks_{1} , \fraks_{2}}) \cup \cdots \cup  \mathbb{S}(\fraks_n) \right)  . 
\]
For the subgroup $S^1 \subset O(2)$, we define the $S^1$-actions as the usual complex multiplication on  
\[
\mathbb{S}(\mathfrak{s}_i) = \left(\mathbb{C}^{\frac{w(\mathfrak{s}_i)+h}{2}} \right)^+ \qquad \text{ and } \qquad \mathbb{E}(e_{\fraks_i, \fraks_{i-1}}) = \left(\mathbb{C}^{\frac{w(e_{\fraks_i,\fraks_{i+1}})+h}{2}} \right)^+ \wedge [0,1].
\]
Next, we define the action of $I \subset O(2)$ on $\mathcal{H}(\gamma,\mathfrak{s})$.  
For the vertices, when $i > 1$, we define anti-complex linear maps:
\begin{align*}
&I \colon \mathbb{S}(\fraks_i) \to \mathbb{S}(\fraks_{-i}), \\
&I \colon \mathbb{S}(\fraks_{-i}) \to \mathbb{S}(\fraks_i),
\end{align*}
given by complex conjugation. For the edges $\mathbb{E}(e_{\fraks_i,\fraks_{i+1}})$ with $i > 0$, we define similar actions:
\begin{align*}
&I \colon \mathbb{E}(e_{\fraks_i, \fraks_{i+1}}) \to \mathbb{E}(e_{\fraks_{-i}, \fraks_{-i-1}}), \\
&I \colon \mathbb{E}(e_{\fraks_{-i}, \fraks_{-i-1}}) \to \mathbb{E}(e_{\fraks_i, \fraks_{i+1}}),
\end{align*}
where the actions on $[0,1]$ are trivial, except for the central edge $\mathbb{E}(e_{\fraks_{-1}, \fraks_1})$.  
These actions anti-commute with the action of $i \in S^1$. For the central edge $\mathbb{E}(e_{\fraks_{-1}, \fraks_1})$, we define
\[
I \colon \left(\mathbb{C}^{(w(\fraks_{-1})+ h)/2} \right)^+ \wedge [0,1] \to \left(\mathbb{C}^{(w(\fraks_{-1})+ h)/2} \right)^+ \wedge [0,1]
\]
such that $I$ acts on $[0,1]$ by reflection, and on $\left(\mathbb{C}^{w(\fraks_{-1})/2} \right)^+$ by complex conjugation. Since all $I$-actions are compatible, we obtain a well-defined $O(2)$-action on $\mathcal{H}(\gamma,\mathfrak{s})$.  
With this action, we may regard $\Sigma^{\frac{h}{2}\mathbb{C}} \mathcal{H}(\gamma,\mathfrak{s})$, and therefore $\mathcal{H}(\gamma,\mathfrak{s})$, as objects in $\mathfrak{C}_{O(2)}$.

 \subsection{Proof of \cref{main computation}}
 
Now, we provide the construction of $\mathcal{T}^{O(2)} $ here, which gives the proof of \cref{main computation}. 
Note that we have a decomposition of the lattice homotopy type 
\[
\mathcal{H}(\gamma,\mathfrak{s}) = \Gamma_0 \cup \Gamma_{\theta}, 
\]
where 
\[
\Sigma^{\frac{h}{2}\C } \Gamma_{\theta} = \mathbb{S}(\fraks_{-1}) \cup \mathbb{E}(e_{\fraks_{-1},\fraks_{1}}) \cup \mathbb{S}(\fraks_{1})   
\]
and $\Gamma_0$ is the other part which has a free $I$-action. 
For each vertex $\fraks_i$ of an almost $I$-equivariant path $\gamma$, we associate the corresponding $U(1)$-equivariant Bauer--Furuta invariant 
\[
BF_{W_\Gamma, \fraks_i} \colon \mathbb{S}(\fraks_i) \to \Sigma^{\frac{h}{2} \mathbb{C} } SWF(Y, \fraks)
\]
with stabilizations by $\R$ and ${\C}$ for $i> 0 $. 
Let $\fraks_i$ and $\fraks_{i+1}$ be two successive vertices in $\gamma$, so that $\fraks_{i+1}= \fraks_i + 2v^*$ for some vertex $v$ of $\Gamma$, there $v^*$ denotes the homology class of the 2-handle core of $v$. For the edges with $i>1$, we use the following adjunction relation described in \cref{adjunction} and obtain a $U(1)$-equivariant homotopy 
\[
BF_{W_\Gamma, e_{\fraks_{i}, \fraks_{i-1}}} \colon \mathbb{E}(e_{\fraks_{i}, \fraks_{i-1}}) \to \Sigma^{\frac{h}{2} \C} SWF(Y). 
\]

Here we state the $U(1)$-adjunction relation proven in \cite[Proposition 3.15]{DSS2023}:

\begin{prop}\label{adjunction}
Let $(X, \fraks)$ be a smooth 4-dimensional $\mathrm{spin}^c$ cobordism from $(Y_0, \fraks_0)$ to $(Y_1, \fraks_1)$ with $b_1(X) = b_1(Y_i) = 0$.  
Suppose that $X$ contains an embedded sphere $S$ with $S \cdot S < 0$, and let $L$ be the complex line bundle on $X$ with $c_1(L) = \operatorname{PD}(S)$. Define
\[
\fraks' = \fraks \otimes L \qquad \text{ and } \qquad n := \frac{\langle c_1(\fraks), [S] \rangle + [S]^2}{2}.
\]
We write the $S^1$-equivariant Bauer--Furuta invariants of $\fraks$ and $\fraks'$ as maps:
\begin{align*}
&BF_{X, \fraks} \colon \left( \mathbb{C}^{\frac{c_1(\fraks)^2 - \sigma(X)}{8}} \right)^+ \wedge SWF(Y_0) \to SWF(Y_1), \\
&BF_{X, \fraks'} \colon \left( \mathbb{C}^{\frac{c_1(\fraks')^2 - \sigma(X)}{8}} \right)^+ \wedge SWF(Y_0) \to SWF(Y_1).
\end{align*}
Then $BF_{X, \fraks}$ and $U^n BF_{X, \fraks'}$ are $S^1$-stably homotopic if $n > 0$, and $U^{-n} BF_{X, \fraks}$ and $BF_{X, \fraks'}$ are $S^1$-stably homotopic if $n < 0$.  
Here, $U$ denotes the stable homotopy class of the map
\[
X \to \Sigma^{\mathbb{C}} X,
\]
defined by $x \mapsto (0, x)$.
\end{prop}

For $i < 0$, we obtain maps
\begin{align*}
&I_Y \circ BF_{W_\Gamma, \fraks_{-i}} \circ I \colon \mathbb{S}(\fraks_i) \to \Sigma^{\frac{h}{2} \mathbb{C}} SWF(Y, \fraks), \\
&I_Y \circ BF_{W_\Gamma, e_{\fraks_{-i}, \fraks_{-i+1}}} \circ I \colon \mathbb{E}(e_{\fraks_{i}, \fraks_{i-1}}) \to \Sigma^{\frac{h}{2} \mathbb{C}} SWF(Y, \fraks),
\end{align*}
where $I_Y$ denotes the real involution on $SWF(Y, \fraks)$. This yields a well-defined $O(2)$-equivariant map
\[
\mathcal{T}_0 \colon \Gamma_0 \to SWF(Y, \fraks).
\]
For the edge $\mathbb{E}(e_{\fraks_{-1},\fraks_{1}})$, we construct $U(1)$-equivariant homotopies between the maps
\begin{align*}
&I_Y \circ BF_{W_\Gamma, \fraks_{1}} \circ I \colon \mathbb{S}(\fraks_{-1}) \to \Sigma^{\frac{h}{2} \mathbb{C}} SWF(Y, \fraks) \text{ and } \\
&BF_{W_\Gamma, \fraks_{1}} \colon \mathbb{S}(\fraks_1) \to \Sigma^{\frac{h}{2} \mathbb{C}} SWF(Y, \fraks).
\end{align*}
In order to connect these maps, we use the following:

\begin{lem}\label{adjunctionwe}
After identifying $\mathbb{S}(\fraks_1) \cong \mathbb{S}(\fraks_{-1})$, the two maps
\begin{align*}
&I_Y \circ BF_{W_\Gamma, \fraks_{1}} \circ I \colon \mathbb{S}(\fraks_{-1}) \to \Sigma^{\frac{h}{2} \mathbb{C}} SWF(Y, \fraks), \\
&BF_{W_\Gamma, \fraks_{1}} \colon \mathbb{S}(\fraks_1) \to \Sigma^{\frac{h}{2} \mathbb{C}} SWF(Y, \fraks)
\end{align*}
are $S^1$-equivariantly homotopic.
\end{lem}




\begin{proof}
Note that we have $\tau^* \fraks_{1} \cong \overline{\fraks}_{-1}$ and $\tau^* \fraks_{-1} \cong \fraks_{1}$, and 
\[
\fraks_{1} - \fraks_{-1} = \operatorname{PD}([S])
\]
for some $\tau$-invariant, negatively embedded surface $S$ in $W_\Gamma$ which is described as the disjoint union of setwise $\tau$-fixed embedded spheres $S= S_1 \cup \cdots \cup S_a \subset W_\Gamma $ for $a \geq 1$. Let us denote by $\nu(S)$ a closed, $\tau$-invariant tubular neighborhood of $S$, which can be identified with the union of the total space of the disk bundle over $S^2$ with Euler number $p_j < 0$. Then the boundary of $\nu(S)$ is identified with the lens space $\displaystyle \bigcup_{1\leq j \leq a} L(p_j,1)$. Here, we use the orientation convention that $L(p, q)$ is obtained by $p/q$-surgery on the unknot.  We claim the following:
\begin{align}\label{uu}
\frac{c_1(\fraks_{-1}|_{\nu(S)})^2 - \sigma(\nu(S))}{4} 
= \frac{c_1(\fraks_{1}|_{\nu(S)})^2 - \sigma(\nu(S))}{4} 
= \sum_{1 \leq j \leq a} d\big(L(p_j,1), \fraks_{1}|_{L(p,1)}\big).
\end{align}
For proving \eqref{uu}, it is enough to see 
\[
\frac{c_1(\fraks_{-1}|_{\nu(S_j)})^2 - \sigma(\nu(S_j))}{4} 
= \frac{c_1(\fraks_{1}|_{\nu(S_j)})^2 - \sigma(\nu(S_j))}{4} 
=  d\big(L(p_j,1), \fraks_{1}|_{L(p_j,1)}\big)
\]
for each $j\in \{1, \dots, a\}$ where $\nu(S_j)$ is a $\tau$-invariant closed neighborhood of $S_j$. 

The inequality version of \eqref{uu} is nothing but a Fr{\o}yshov-type inequality in Heegaard Floer theory. 
The following formula for the Heegaard Floer $d$-invariant is known \cite[Section~4]{OzSz03b}:
\[
d(L(p,1), [i]) = -\frac{1}{4} + \frac{(2i - p)^2}{4p}
\]
for $i = 0, \ldots, p-1$, where $[i]$ denotes the unique $\mathrm{spin}^c$ structure on $L(p,1)$ that extends over the $p$-trace $O(p)$ of the unknot and satisfies $c_1 = 2i - p$. Notice that, when restricted to $\nu(S_j)$, we have
\[
c_1(\mathfrak{s}_1)\vert_{\nu(S_j)} = (\mathfrak{s}_1 - \bar{\mathfrak{s}}_1)\vert_{\nu(S_j)} = (\text{spherical Wu class of } W_{\Gamma_{p,q}})\vert_{\nu(S_j)} = PD_{\nu(S_j)}[S_j],
\]
where the Poincar\'{e} dual is taken in $\nu(S_j)$. Since the boundary of $\nu(S_j)$ is $L(p_j,1)$, we have $PD_{\nu(S_j)}[S_j] = p_j$, and thus
\[
c_1(\mathfrak{s}_1\vert_{\nu(S_j)}) = p_j \qquad \text{ and} \qquad c_1(\mathfrak{s}_{-1}\vert_{\nu(S_j)}) = -p_j,
\]
i.e.,
\[
c_1(\mathfrak{s}_1\vert_{\nu(S_j)})^2 = c_1(\mathfrak{s}_{-1}\vert_{\nu(S_j)})^2 = p_j \qquad \text{ and } \qquad \mathfrak{s}_1\vert_{\partial \nu(S_j)} = [0].
\]
Hence, we have
\[
\frac{c_1(\mathfrak{s}_{-1}|_{\nu(S_j)})^2 - \sigma(\nu(S_j))}{4} 
= \frac{c_1(\mathfrak{s}_1|_{\nu(S_j)})^2 - \sigma(\nu(S_j))}{4} = \frac{p_j-1}{4} = d(L(p_j,1), \mathfrak{s}_1\vert_{L(p_j,1)}),
\]
which proves the claim.

From \eqref{uu}, the $U(1)$-equivariant Bauer--Furuta invariants for $\nu(S)$ with $\fraks_1|_{\nu(S)}$ or $\fraks_{-1}|_{\nu(S)}$ are regarded as stable homotopy classes of $U(1)$-equivariant maps
\begin{align*}
   & BF_{\nu(S), \fraks_1|_{\nu(S)}} \colon \mathbb{C}^m \to \mathbb{C}^m, \\
   & BF_{\nu(S), \fraks_{-1}|_{\nu(S)}} \colon \mathbb{C}^m \to \mathbb{C}^m
\end{align*}
for some $m$. By the observation given in \cite[Proof of Proposition 3.15]{DSS2023}, one can see that $BF_{\nu(S), \fraks_1|_{\nu(S)}}$ and $BF_{\nu(S), \fraks_{-1}|_{\nu(S)}}$ are $U(1)$-equivariantly stably homotopic to the identity. Therefore, by the gluing result for $U(1)$-equivariant Bauer--Furuta invariants, we see that
$BF_{W_\Gamma, \fraks_1}$ and $BF_{W_\Gamma \smallsetminus \overset{\circ}{\nu}(S), \fraks_1}$ (resp. $BF_{W_\Gamma, \fraks_{-1}}$ and $BF_{W_\Gamma \smallsetminus \overset{\circ}{\nu}(S), \fraks_{-1}}$) are $U(1)$-equivariantly stably homotopic, where $\overset{\circ}{\nu}(S)$ denotes the interior of $\nu(S)$. It also follows that we can identify the domains of the maps $BF_{W_\Gamma \smallsetminus \overset{\circ}{\nu}(S), \fraks_1}$ and $BF_{W_\Gamma \smallsetminus \overset{\circ}{\nu}(S), \fraks_{-1}}$, since these spin$^c$ structures coincide over $W_\Gamma \smallsetminus \overset{\circ}{\nu}(S)$. We fix such a geometric identification $\mathbb{S}(\mathfrak{s}_1) = \mathbb{S}(\mathfrak{s}_{-1})$.

It is then sufficient to show that
\begin{align*}
&I_Y \circ BF_{W_\Gamma \smallsetminus \overset{\circ}{\nu}(S), \fraks_1} \circ I \colon \mathbb{S}(\fraks_{-1}) \to \Sigma^{\frac{h}{2} \mathbb{C}} SWF(Y, \fraks), \\
&BF_{W_\Gamma \smallsetminus \overset{\circ}{\nu}(S), \fraks_1} \colon \mathbb{S}(\fraks_1) \to \Sigma^{\frac{h}{2} \mathbb{C}} SWF(Y, \fraks)
\end{align*}
are $U(1)$-equivariantly stably homotopic. Since
\[
\fraks_1 \big|_{W_\Gamma \smallsetminus \overset{\circ}{\nu}(S)} = \fraks_{-1} \big|_{W_\Gamma \smallsetminus \overset{\circ}{\nu}(S)} =: \fraks \qquad \text{ and } \qquad \tau^* \fraks_1 \cong \overline{\fraks}_{-1},
\]
we have a real structure on $\fraks$ over $W_\Gamma \smallsetminus \overset{\circ}{\nu}(S)$. 
We take a norm-preserving, anti-complex linear involution $\widetilde{\tau} \colon S_{\fraks} \to S_{\fraks}$ lifting $\tau$ which is compatible with the Clifford multiplication for $\fraks$. This shows that we have a representative of $BF_{W_\Gamma \smallsetminus \overset{\circ}{\nu}(S), \fraks_1}$ such that 
$BF_{W_\Gamma \smallsetminus \overset{\circ}{\nu}(S), \fraks_1}$ is $I$-equivariant, i.e.,
\[
I_Y \circ BF_{W_\Gamma \smallsetminus \overset{\circ}{\nu}(S), \fraks_1} \circ I' = BF_{W_\Gamma \smallsetminus \overset{\circ}{\nu}(S), \fraks_1}
\]
for a certain anti-complex linear map
\[
I' \colon \mathbb{S}(\fraks_1) \to \mathbb{S}(\fraks_{-1}) = \mathbb{S}(\fraks_1)
\]
induced from $\widetilde{\tau}$.
By a certain complex base change that is isotopic to the identity (see \cref{O2rep} and \cref{auto}), the actions $I$ and $I'$ can be identified. This completes the proof.
\end{proof}

For \cref{adjunctionwe}, we  have an $S^1$-equivariant map 
\[
\mathcal{T}\colon \Gamma_{\theta}= \mathbb{E}({e_{\fraks_{-1},\fraks_{1}}}) \to SWF(Y)
\]
and 
it defines an $S^1$-equivariant map 
\[
\mathcal{T}\colon \mathcal{H}(\gamma,\mathfrak{s}) \to SWF(Y)
\]
which is $O(2)$-equivariant except for the central edge $\mathbb{E}({e_{\fraks_{-1},\fraks_{1}}})$.  In order to give an $O(2)$-equivariant map
\[
\mathcal{T}^{O(2)}\colon \mathcal{H}(\gamma,\mathfrak{s}) \to SWF(Y), 
\]
we need to modify this construction.  The main strategy is almost the same as the $\operatorname{Pin}(2)$ case in \cite{DSS2023}. Since the argument is almost the same as that given in \cite{DSS2023}, we just write a flow of the proof and which part is different. 
First, we define 
\[
\Theta := \operatorname{Cone} \left( \mathcal{T}_0 \colon \Gamma_0 \to SWF(Y)\right) 
\]
and regard it as an $O(2)$-space.
Note that we have the following diagram of equivariant cofibration sequences: 
\begin{align}\label{cof seq}
  \begin{CD}
 \cdots @>>>   \Gamma_0 @>>>   \mathcal{H}(\gamma,\mathfrak{s}) @>>> \Sigma^{\wt{\R}} S^0 @>>>  \cdots  \\
   @VVV    @VVV @V{\mathcal{T}}VV  @VVV  @VVV  \\
 \cdots @>>>   \Gamma_0    @>{\mathcal{T}^{O(2)}|_{\Gamma_0}}>> SWF(Y)  @>>>  \Theta  @>>>  \cdots
  \end{CD}
\end{align}

Since $\mathcal{T}$ is $S^1$-equivariantly homotopic, the $O(2)$-space $\Theta$ is $S^1$-equivariantly homotopic to $\Sigma^{\wt{\R}} S^0$ up to stabilization by $\C$. Let us stabilize $\Theta$ so that $\Theta$ is $S^1$-equivariantly homotopy equivalent to  $\Sigma^{\wt{\R}} S^0$. 
We will prove $\mathcal{T}$ admits an $O(2)$-equivariant lift. 
 If there is an $O(2)$-equivariant homotopy equivalence  
\[
\Theta \to \Sigma^{\wt{\R}} S^0
\]
which satisfies the commutativity 
\[
  \begin{CD}
     \Sigma^{\wt{\R}}S^0 @>>> \Sigma^\R \Gamma_0  \\
  @VVV    @VVV \\
     \Theta  @>>>  \Sigma^\R \Gamma_0
  \end{CD}
\]
up to $O(2)$-homotopy, then from the identifications
\[
    \Sigma^\R \mathcal{H}(\gamma,\mathfrak{s}) = \operatorname{Cone} \left(\Sigma^{\wt{\R}}S^0 \to \Sigma^\R \Gamma_0\right) \qquad\text{ and }\qquad
    \Sigma^\R SWF(Y) = \operatorname{Cone} \left(\Theta \to \Sigma^\R \Gamma_0\right),
    \]
    we see $\mathcal{T}$ has an $O(2)$-equivariant lift. 
More precisely, we consider the following steps: 
\begin{itemize}
    \item[\bf Step 1]: First we prove there is $\Z_2 = \langle I \rangle$ equivariant homotopy equivalence: 
    \begin{align}\label{S1fix}
         \Theta^{S^1} \cong_{\Z_2} \Sigma^{\wt{\R}} S^0. 
    \end{align}
 This statement is corresponding to \cite[Lemma 6.5]{DSS2023}. Since several techniques \cite[(Ho-3), (6.6) in the $O(2)$-setting]{DSS2023} to see \eqref{S1fix} in the $\operatorname{Pin}(2)$-setting can also work for $O(2)$, we have the desired result. 
    \item[\bf Step 2]: Next, we prove that, for sufficiently large $p$ and $q$, there exists an $O(2)$-map 
    \[
    M \colon \Theta \to \Sigma^{q\wt{\R} \oplus p\C} SWF(Y)
    \]
    which induces homotopy equivalence in $O(2)$-fixed point spectra. This statement is an analog of \cite[Lemma 6.6]{DSS2023}. 
Here we use the following facts: 
\begin{itemize}
    \item The vanishing result 
    \[
    \left[\Sigma^{r \R  } \Gamma_0, \Sigma^{r \R \oplus q\wt{\R}\oplus p \mathbb{C}} S^0\right]_{O(2)}=0
    \]
    for sufficiently large $p$, $q$ and $r$.  
    It follows from \cite[Proposition 4.2]{Ada84}. Since we are working in $\mathfrak{C}_{O(2)}$, we omit $\Sigma^{r \R  }$. 

    \item For sufficiently large $p$ and $q$, there is an $O(2)$-equivariant map 
    \[
   N\colon  SWF(Y) \to \Sigma^{r \R \oplus q\wt{\R}\oplus  p \mathbb{C}} S^0 
    \]
    for some $r$. 
    This comes from the $O(2)$ Bauer--Furuta invariant for the double-branched cover of an oriented surface $S$ (with high genus) in $D^4$ bounded by $K$ with respect to its unique spin structure.  
   Note that $r$ is zero in this situation since $r = b_2^+(\Sigma_2 (S)/ \Z_2 ) = b^+_2 ( D^4 )  =0 $.  
    Since it is spin structure, the Bauer--Furuta invariant has a $\operatorname{Pin}(2)\times_{\Z_2 } \Z_4$-symmetry (as the maximal symmetry, see \cite{Mon22}), but we just forget by the homomorphism $S^1\rtimes \Z_2\cong O(2) \to \operatorname{Pin}(2)\times_{\Z_2 } \Z_4$, defined by 
    \[
   (u,0) \mapsto (u,0) \qquad \text{ and }\qquad  \   (u, 1) \mapsto (j u, j), 
    \]
    where $j$ (resp. $1$) denotes the generator of $\Z_4$ (resp. $\Z_2$).
\end{itemize} 

    \item[\bf Step 3]:
 We reduce the numbers $p$ and $q$ so that we have 
\[
M' :  \Theta \to \Sigma^{\wt{\R}}S^0 
\]
and $M'$ induces homotopy equivalence for $S^1$- and $O(2)$- fixed point parts and take an O(2)-equivariant map 
\[
f : \Sigma^{\wt{\R}}S^0  \to \Sigma^\R \Gamma_0 
\]
which satisfies the commutativity 
\[
  \begin{CD}
\Sigma^{\wt{\R}}S^0 @>{f}>> \Sigma^\R \Gamma_0  \\
  @VVV    @VVV \\
     \Theta  @>>>  \Sigma^\R \Gamma_0
  \end{CD}
\]
up to $O(2)$-homotopy and $f \circ M' = g$, where $f$ and $g$ are attaching maps of the cofibrations  in \eqref{cof seq}. 
This is again an analog of \cite[Lemma 6.7]{DSS2023}. 
 Since $O(2)$-analogs of \cite[(6.8), (ho-03), Theorem 6.4]{DSS2023} are still true, their argument still works in our setting. 
\end{itemize}

\subsection{Almost $I$-equivariant path for even torus knots} \label{subsec: finding almost I-eqv path}

Given a torus knot $K = T_{p,q}$ where $p, q > 0$ and $p$ is even, applying Seifert's algorithm to the Brieskorn sphere $\Sigma_2(K) = \Sigma(2, p, q)$ shows that the canonical Seifert action on $\Sigma(2, p, q)$ has three singular fibers, two of which are identical. This data can be translated into an almost rational, negative-definite plumbing graph $\Gamma_{p,q}$ with three legs, where two of the legs are identical. We may depict $\Gamma_{p,q}$ as follows. For simplicity, we refer to the node of weight $-a_i$ as the $a_i$-node, the node of weight $-b_j$ in the upper right leg as the upper $b_j$-node, the node of weight $-b_j$ in the lower right leg as the lower $b_j$-node, and the node of weight $-c$ as the central node.

\[
\begin{tikzpicture}[xscale=1.5, yscale=1, baseline={(0,-0.1)}]
    \node at (-0.1, 0.3) {$-c$};
    \node at (-1, 0.3) {$-a_1$};
    \node at (-2.5, 0.3) {$-a_m$};
    
    \node at (1, 1.3) {$-b_1$};
    \node at (2.5, 1.3) {$-b_n$};

    \node at (1, -0.7) {$-b_1$};
    \node at (2.5, -0.7) {$-b_n$};
    
    \node at (0, 0) (A0) {$\bullet$};
    \node at (1, 1) (A1) {$\bullet$};
    \node at (2.5, 1) (A2) {$\bullet$};

    \node at (-1, 0) (B1) {$\bullet$};
    \node at (-2.5, 0) (B2) {$\bullet$};
    
    \node at (1, -1) (C1) {$\bullet$};
    \node at (2.5, -1) (C2) {$\bullet$};
    
    \draw (A0) -- (A1);
    \draw[dotted] (A1) -- (A2);
    \draw (A0) -- (B1);
    \draw[dotted] (B1) -- (B2);
    \draw (A0) -- (C1);
    \draw[dotted] (C1) -- (C2);
    \end{tikzpicture}
\]
The integers $c, a_1, \ldots, a_m, b_1, \ldots, b_n$ satisfy the following relations: there exist integers $\beta_2$ and $\beta_3$ such that $1 \leq \beta_2 < \frac{p}{2}$ and $1 \leq \beta_3 < q$, and
\[
\frac{p}{2\beta_2} = a_1 - \cfrac{1}{a_2 - \cfrac{1}{\ddots - \cfrac{1}{a_m}}}, \qquad
\frac{q}{\beta_3} = b_1 - \cfrac{1}{b_2 - \cfrac{1}{\ddots - \cfrac{1}{b_n}}},
\]
together with the Seifert condition
\[
\frac{cpq}{2} + q\beta_2 + p\beta_3 = 1.
\]

We now describe an involution $\tau$ on the corresponding plumbed 4-manifold $W_{\Gamma_{p,q}}$. We start with the central vertex: choose a 2-sphere $S_c$, equipped with a rotation involution fixing two points, namely the north pole $P^+_c$ and the south pole $P^-_c$. Choose a disk $D^{+,c}$ centered at $P^+_c$, such that it is setwise fixed under the rotation involution. Furthermore, choose disjoint disks $D^{-,c}_u, D^{-,c}_l \subset S_c \smallsetminus D^{+,c}$ that are swapped under the rotation involution.

Similarly, for each $a_i$-node, choose a 2-sphere $S_{a_i}$, again equipped with a rotation involution fixing the north pole $P^{+,a_i}$ and the south pole $P^{-,a_i}$. Choose disjoint disks $D^{+,a_i}$ and $D^{-,a_i}$, centered at $P^{+,a_i}$ and $P^{-,a_i}$ respectively, so that they are setwise fixed under the rotation involution.

Next, for each $j = 1, \dots, n$, we choose a disjoint union of two 2-spheres, i.e., $S_{b_j} = S^2 \sqcup S^2$, equipped with an involution that swaps the two components. Denote by $D^{+,b_j}_u$ and $D^{-,b_j}_l$ the (disjoint) disks centered at the north and south poles of the first and second components of $S_{b_j}$, respectively.

Now we build a symmetric model of $W_{\Gamma_{p,q}}$ as follows.
\begin{itemize}
    \item Consider the disk bundle $p_c \colon E_c \rightarrow S_c$ of Euler number $-c$. We choose a lift of the rotation involution on $S_c$ to the bundle $E_c$ so that its restriction to the trivial bundle $p_c^{-1}(D^{+,c}) \cong D^2 \times D^{+,c}$ is given by $(x, y) \mapsto (x, -y)$.
    
    \item For each $i = 1, \dots, n$, consider the disk bundles $p_{a_i} \colon E_{a_i} \rightarrow S_{a_i}$ of Euler number $-a_i$, and $p_{b_j} \colon E_{b_j} \rightarrow S_{b_j}$ of Euler number $-b_j$ (on both components of $S_{b_j}$). Note that for each such node, the given involution on the corresponding sphere (or pair of spheres) admits several possible lifts to its associated disk bundle.
    
    \item For each $i = 1, \dots, n - 1$, glue the total space of $E_{a_i}$ to $E_{a_{i+1}}$ by identifying $p_{a_i}^{-1}(D^{+,a_i})$ and $p_{a_{i+1}}^{-1}(D^{+,a_{i+1}})$ via the coordinate-swapping diffeomorphism:
    \[
    \mathrm{swap} \colon D^2 \times D^2 \xrightarrow{(x, y) \mapsto (y, x)} D^2 \times D^2.
    \]
    Similarly, glue the total space of $E_{b_j}$ to $E_{b_{j+1}}$ for $j = 1, \dots, m - 1$.
    
    \item Next, glue the total spaces of $E_{a_1}$ and $E_{b_1}$ as follows. First, identify $p_{a_1}^{-1}(D^{-,a_1})$ with $p_c^{-1}(D^{+,c})$ via the coordinate-swapping diffeomorphism. Then identify $p_{b_1}^{-1}(D^{+,b_1})$ with $p_c^{-1}(D^{-,c}_u \sqcup D^{-,c}_l)$ via the following diffeomorphism:
    \[
    (D^2 \sqcup D^2) \times D^2 = (D^2 \times D^2) \sqcup (D^2 \times D^2) 
    \xrightarrow{\mathrm{swap} \sqcup \mathrm{swap}} 
    (D^2 \times D^2) \sqcup (D^2 \times D^2) = D^2 \times (D^2 \sqcup D^2).
    \]
    
    \item The involution on $E_c$ determines a unique choice of linear involution on $E_{a_1}$, which in turn inductively determines linear involutions on each $E_{a_i}$. Similarly, we can also determine compatible linear involutions on each $E_{b_j}$.
\end{itemize}

The resulting 4-manifold is clearly $W_{\Gamma_{p,q}}$, and the involutions on each disk bundle induce a smooth involution $\tau$ on $W_{\Gamma_{p,q}}$. Observe that
\[
\mathrm{Fix}(\tau) = p_c^{-1}(P^{-,c}) \sqcup \text{disjoint embedded spheres}.
\]
Here, the sphere components of $\mathrm{Fix}(\tau)$ are contained in the interior and thus do not intersect $\partial W_{\Gamma_{p,q}}$. Hence, the fixed point set of the action of $\tau$ on $\partial W_{\Gamma_{p,q}}$ is given by the boundary of the disk $p_c^{-1}(P^{-,c})$.  By the following lemma, we may identify $\partial W_{\Gamma_{p,q}}$ with $\Sigma_2(K)$, where the involution $\tau$ (restricted from its action on $W_{\Gamma_{p,q}}$) corresponds to the deck transformation on $\Sigma_2(K)$. Thus, we will henceforth simply denote the deck transformation on $\Sigma_2(K)$ by $\tau$.

\begin{lem} \label{lem:branching-set-torus-knot}
The pair $(\partial W_{\Gamma_{p,q}}, \tau)$ is equivariantly diffeomorphic to $(\Sigma_2(K), \text{deck transformation})$.
\end{lem}

\begin{proof}
    We start by drawing the action of $\tau$ on $\partial W_{\Gamma_{p,q}}$ in terms of surgery diagrams, as shown in \Cref{fig:kirby1}. Note that we have chosen the weight of the central node to be $-2$, i.e., $c=2$, at the expense of modifying the rational surgery slope of the invariant component. The action of $\tau$ can be seen on the given surgery diagram as the $180^\circ$ rotation about the vertical surgery curve of slope $\frac{p}{2\beta_2}$. Hence, the quotient manifold $\partial W_{\Gamma_{p,q}}/\tau$ can be drawn as in \Cref{fig:kirby2}; note that the branching set $K$ is the meridian of the central surgery curve of slope $-1$. To prove the lemma, it suffices to show that $(\partial W_{\Gamma_{p,q}}, K)$ is diffeomorphic to $(S^3, T_{p,q})$.

\begin{figure}[h]
\centering
\includegraphics[height=.28\linewidth]{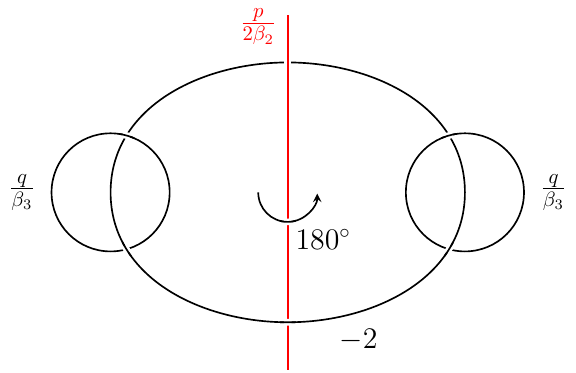}
\caption{A surgery diagram for $\partial W_{\Gamma_{p,q}}$. The action of $\tau$ can be seen as the $180^\circ$ rotation about the red vertical surgery curve. Here, $\beta_2$ and $\beta_3$ are negative integers satisfying the equation
$pq + q\beta_2 + p\beta_3 = 1.$
}
\label{fig:kirby1}
\end{figure}

    It can be readily checked that $\partial W_{\Gamma_{p,q}}/\tau \cong S^3$. To see this, one can reverse the slam-dunk moves to transform the $\frac{p}{\beta_2}$-framed unknotted curve into a chain of integral-framed unknotted curves, and then perform slam-dunk moves starting from the leftmost $\frac{q}{\beta_3}$-framed unknotted curve. This process yields an unknotted curve with framing $r$, where the numerator satisfies
\[
pq + q\beta_2 + p\beta_3 = 1,
\]
which implies that $\partial W_{\Gamma_{p,q}}/\tau \cong S^3$.



    It remains to show that, under the identification $\partial W_{\Gamma_{p,q}}/\tau \cong S^3$, the branching set $K$ corresponds to the torus knot $T_{p,q}$. While there is a direct way to see this by carefully following the slam-dunk moves (or by invoking the classification of involutions on $\Sigma(2, p, q)$ that commute with the Seifert $S^1$-action), we provide an indirect proof for simplicity. We begin by computing the difference between the blackboard framing of $K$ as shown in \Cref{fig:kirby2} and the Seifert framing of $K$. Observe that performing surgery along $K$ with respect to its blackboard framing yields $L(p, \beta_2) \# L(q, \beta_3)$. By \cite[Theorem 1.5]{greene2015space}, we know that the surgery slope must be $pq$ (with respect to the Seifert framing). Hence, the surgery diagram in \Cref{fig:kirby3} describes $S^3_{pq+N}(K)$ for any integer $N$.

\begin{figure}[h]
\centering
\includegraphics[height=.26\linewidth]{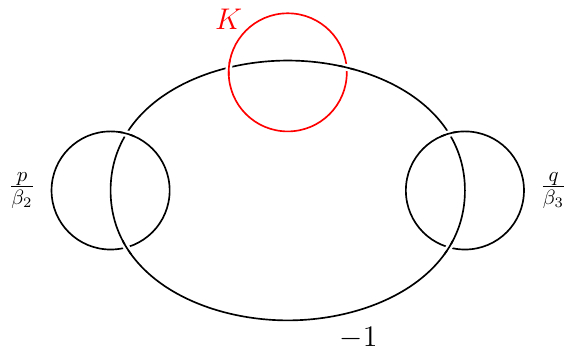}
\caption{A surgery diagram of $\partial W_{\Gamma_{p,q}}/\tau$. The branching set $K$, namely the image of $\mathrm{Fix}(\tau)$ under the projection $\partial W_{\Gamma_{p,q}} \to \partial W_{\Gamma_{p,q}}/\tau$, is drawn in red.}
\label{fig:kirby2}
\end{figure}

\begin{figure}[h]
\centering
\includegraphics[height=.26\linewidth]{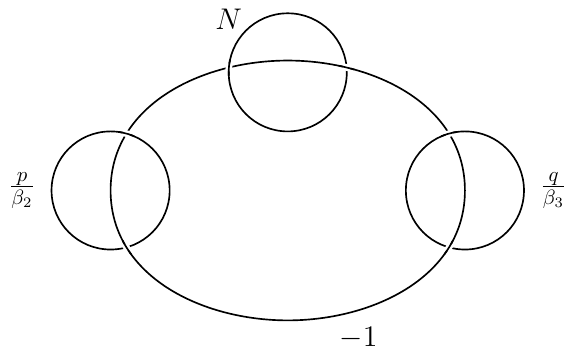}
\caption{A surgery diagram for the $(pq+N)$-surgery along $K$.
}
\label{fig:kirby3}
\end{figure}

    Now, whenever $N > 1$, the manifold $S^3_N(T_{p,q})$ is Seifert fibered, and its Seifert invariants can be computed by following the proof of \cite[Proposition 3.1]{moser1971elementary}. A straightforward computation shows that the resulting Seifert invariants agree with those that can be read off from \Cref{fig:kirby3}. Hence, we have shown that
\[
S^3_N(K) \cong S^3_N(T_{p,q}) \qquad \text{ for all integers } \qquad N > 1.
\]
Since any slope greater than $\frac{30(p^2-1)(q^2-1)}{67}$ is a characterizing slope for $T_{p,q}$ by \cite[Theorem 1.3]{ni2014characterizing}, we conclude that $K = T_{p,q}$, as desired. \end{proof}


Observe that the action of $\tau$ fixes all spheres $S_c$, $S_{a_i}$, and $S_{b_j}$, embedded in $W_{\Gamma_{p,q}}$ as the zero-sections of the corresponding disk bundles, setwise. Furthermore, the Poincaré dual of the spherical Wu class of $W_{\Gamma_{p,q}}$ is the sum of spheres corresponding to a collection of pairwise non-adjacent nodes of $\Gamma_{p,q}$. Hence, we may write
\[
\mathrm{Fix}(\tau) = S^a_1 \sqcup \cdots \sqcup S^a_k \sqcup S^b_1 \sqcup \cdots \sqcup S^b_s \sqcup \tau S^b_1 \sqcup \cdots \sqcup \tau S^b_s,
\]
where $S^a_i$ are the zero-sections of disk bundles corresponding to either the central node or nodes in the invariant leg of $\Gamma_{p,q}$ that support the spherical Wu class, $S^b_j$ are the zero-sections of disk bundles corresponding to nodes in the upper right leg of $\Gamma_{p,q}$ that support the spherical Wu class, and $\tau S^b_j$ is the image of $S^b_j$ under $\tau$.  Note that, since the spherical Wu class is symmetric with respect to the action of $\tau$, the spheres $\tau S^b_j$ are also the zero-sections of disk bundles corresponding to nodes in the lower right leg of $\Gamma_{p,q}$ that support the spherical Wu class.

We may assume that, among the spheres $S^a_1, \ldots, S^a_k$, $S^a_1$ corresponds to the rightmost node, i.e., all other nodes are located to its left. Similarly, among the spheres $S^b_1, \ldots, S^b_s$, we may assume that $S^b_1$ corresponds to the leftmost node. We divide into several cases.

\begin{caseof}
\case{$k > 0$ and $S^a_1$ is supported on the central node of $\Gamma_{p,q}$.}{We choose a smooth path $\gamma_1$ from a non-$\tau$-invariant point of $S^a_1$ to the north pole of $S^b_1$; we may then perturb $\gamma_1$ to ensure that $\gamma_1 \cap \tau \gamma_1 = \emptyset$. For each $j = 2, \ldots, s$, we choose a smooth path $\gamma_j$ from the south pole of $S^b_{j-1}$ to the north pole of $S^b_j$. Then $\tau \gamma_j$ is a smooth path from the south pole of $\tau S^b_{j-1}$ to the north pole of $\tau S^b_j$, and clearly $\gamma_j \cap \tau \gamma_j = \emptyset$.  Hence, we may connect the spheres $S^a_1, S^b_1, \ldots, S^b_s, \tau S^b_1, \ldots, \tau S^b_s$ by tubing along the curves $\gamma_1, \ldots, \gamma_s, \tau \gamma_1, \ldots, \tau \gamma_s$ to obtain a smoothly embedded, setwise $\tau$-invariant sphere $\hat{S}$ whose homology class satisfies
\[
[\hat{S}] = [S^a_1] + [S^b_1] + \cdots + [S^b_s] + [\tau S^b_1] + \cdots + [\tau S^b_s].
\]
Observe that the action of $\tau$ on $\hat{S}$ is a rotation involution with two fixed points, and that $\hat{S}$ is disjoint from the spheres $S^a_2, \ldots, S^a_k$. Clearly, the $\tau$-action on these spheres is either a rotation involution or trivial, and the spherical Wu class is given by the Poincar\'{e} dual of the sum of the homology classes of the spheres $\hat{S}, S^a_2, \ldots, S^a_k$.}
\case{$k > 0$ and $S^a_1$ is not supported on the central node of $\Gamma_{p,q}$.}{We choose a smooth path $\gamma_1$ from the south pole of the zero-section $S_c$ of the central node of $\Gamma_{p,q}$ to the north pole of $S^b_1$. By perturbing $\gamma_1$ if necessary, we may assume that $\gamma_1 \cap \tau \gamma_1 = \emptyset$. Then $\gamma_1 \cup \tau \gamma_1$ is a piecewise smooth, setwise $\tau$-invariant path between the north pole of $S^b_1$ and the north pole of $\tau S^b_1$. We can then smooth it to obtain a smooth, setwise $\tau$-fixed path $\hat{\gamma}$. For each $j = 2, \ldots, s$, we choose a smooth path $\gamma_j$ from the south pole of $S^b_{j-1}$ to the north pole of $S^b_j$. Then $\tau \gamma_j$ is a smooth path from the south pole of $\tau S^b_{j-1}$ to the north pole of $\tau S^b_j$, and clearly $\gamma_j \cap \tau \gamma_j = \emptyset$.
We can then connect the spheres $S^b_1, \ldots, S^b_s, \tau S^b_1, \ldots, \tau S^b_s$ along the paths $\hat{\gamma}, \gamma_2, \ldots, \gamma_s, \tau \gamma_2, \ldots, \tau \gamma_s$ to obtain a smoothly embedded, setwise $\tau$-fixed sphere $\hat{S}$ whose homology class satisfies
\[
[\hat{S}] = [S^b_1] + \cdots + [S^b_s] + [\tau S^b_1] + \cdots + [\tau S^b_s].
\]
Observe that the action of $\tau$ on $\hat{S}$ is a rotation involution with two fixed points, and that $\hat{S}$ is disjoint from the spheres $S^a_1, \ldots, S^a_k$. Clearly, the $\tau$-action on these spheres is either a rotation involution or trivial, and the spherical Wu class is given by the Poincar\'{e} dual of the sum of the homology classes of the spheres $\hat{S}, S^a_1, \ldots, S^a_k$.}
\case{$k = 0$.}{This case is essentially the same as Case 2, which was already discussed above.}
\end{caseof}

Therefore, in any case, we can find a finite collection $\mathcal{S}$ of pairwise disjoint, smoothly embedded spheres $S$ in $W_{\Gamma_{p,q}}$ such that $[S]^2 < 0$, $\tau$ acts on $S$ by either the rotation involution or the identity, and
\[
\text{the spherical Wu class of } W_{\Gamma_{p,q}} = \sum_{S \in \mathcal{S}} [S].
\]

\begin{lem}
Let $p, q$ be coprime positive integers with $p$ even. Consider the plumbing graph $\Gamma_{p,q}$ and the action of $\tau$ on the associated 4-manifold $W_{\Gamma_{p,q}}$. Denote by $\mathfrak{s}$ the unique self-conjugate $\mathrm{spin}^c$ structure on $\partial W_{\Gamma_{p,q}} \cong \Sigma(2, p, q)$. Then, with respect to these data, there exists an almost $I$-invariant path which carries the lattice homology of $(\Gamma_{p,q}, \mathfrak{s})$.
\end{lem}

\begin{proof}
We simply follow the procedure described in \Cref{subsec: s1 and pin(2) construction}, with a minimal modification, to construct an almost $J$-invariant computation sequence $\gamma_J$ which carries the lattice homology of $(\Gamma_{p,q}, \mathfrak{s})$, where $\mathfrak{s}$ denotes the unique self-conjugate $\mathrm{spin}^c$ structure on $\partial W_{\Gamma_{p,q}} \cong \Sigma(2, p, q)$. Recall that such a sequence is obtained by connecting the cycles $x(i_t)$, each defined as the minimal cycle whose coefficient at the (arbitrarily chosen; the minimality condition (discussed in \Cref{subsec:computations sequences}) depends on this choice) base vertex $b_o$ is $i_t$. Since the spherical Wu class consists of zero-sections of disk bundles corresponding to nodes that are pairwise non-adjacent, we can choose $b_o$ to be either the central node or a node contained in the invariant leg of $\Gamma$, in which case it is clear that the minimality condition is symmetric with respect to the $\tau$-action, and thus each $x(i_t)$ is $\tau$-invariant.
Therefore, the induced action of $\tau$ on the graded root $R_\Gamma$ is trivial. Given a decomposition
\[
\gamma_J = \gamma_0 \cup \gamma_\Theta \cup J \gamma_0,
\]
the modified path
\[
\gamma_I = \gamma_0 \cup \gamma_\Theta \cup J \tau \gamma_0
\]
also carries the lattice homology of $(\Gamma_{p,q}, \mathfrak{s})$.

To ensure that $\gamma_I$ is the desired almost $I$-equivariant path carrying the lattice homology of $(\Gamma_{p,q}, \mathfrak{s})$, it remains to verify one final condition: if we denote by $\mathfrak{s}_1$ the first $\mathrm{spin}^c$ structure that appears in the path $\gamma_0$, then
\[
\mathfrak{s}_1 - \overline{\mathfrak{s}}_1 = \sum_{S \in \mathcal{S}} PD[S]
\]
for a finite collection $\mathcal{S}$ of pairwise disjoint, smoothly embedded, setwise $\tau$-fixed spheres with negative self-intersection numbers.  By construction, $\mathfrak{s}_1 - \overline{\mathfrak{s}}_1$ is the spherical Wu class of $W_{\Gamma_{p,q}}$. It then follows from the discussions above that the given condition is satisfied. The lemma follows.
\end{proof}

\begin{rem}
    It follows directly, at this stage, from the $\tau$-invariance of cycles $x(i)$ for each $i\ge 0$ that the action of $\tau$ on the Heegaard Floer chain complex $CF^-(\partial W_\Gamma)$ is homotopic to the identity. This is stronger than the observations made in \cite{alfieri2020connected} regarding the deck transformation action on $\Sigma_2(T_{p,q})\cong \Sigma(2,p,q)$, and thus might be of independent interest.
\end{rem}

\subsection{The real Fr\o yshov invariants of $T_{2n,1-20n}$}
From the observations we made in the previous subsection, we can prove the following theorem regarding real Fr\o yshov invariants of even torus knots.
\begin{thm}\label{thm:realsphere}
    If $K=T_{p,q}$ be a torus knot, where $p,q>0$ and $p$ is even, then we have
    \[
    \delta_R(K)=\underline{\delta}_R(K)=  \overline{\delta}_R(K)= -\frac{1}{2}\bar{\mu}(\Sigma_2(K)),
    \]
    where $\bar{\mu}$ is the Neumann-Siebenmann invariant for the unique spin structure. 
\end{thm}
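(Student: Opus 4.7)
The plan is to invoke \Cref{main computation} with the almost $I$-invariant path $\gamma$ constructed on the symmetric plumbing $(\Gamma,\mathfrak{s}_0)$ in the previous subsection, and then to read off $SWF_R(K)$ from the $I$-fixed locus of the resulting equivariant map. Concretely, \Cref{main computation} produces an $O(2)$-equivariant map
\[
\mathcal{T}^{O(2)}\colon \mathcal{H}(\gamma,\mathfrak{s}_0)\longrightarrow SWF(\Sigma_2(K),\mathfrak{s}_0)
\]
which is an $S^1$-equivariant homotopy equivalence, and in particular a homotopy equivalence on underlying spectra. Since it is $\mathbb{Z}_2=\langle I\rangle$-equivariant and $\mathbb{Z}_2$ is a finite $2$-group, \Cref{lem:keylemma1} shows that
\[
(\mathcal{T}^{O(2)})^I\colon \mathcal{H}(\gamma,\mathfrak{s}_0)^I\longrightarrow SWF_R(K)
\]
induces an isomorphism on $\mathbb{F}_2$-coefficient homology.

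The next step is to identify $\mathcal{H}(\gamma,\mathfrak{s}_0)^I$ explicitly from its cellular description. Because the almost $I$-invariant path $\gamma=\{\mathfrak{s}_{-n},\ldots,\mathfrak{s}_{-1},\mathfrak{s}_{1},\ldots,\mathfrak{s}_n\}$ has no central vertex, $I$ swaps each off-center pair $\mathbb{S}(\mathfrak{s}_i)\leftrightarrow \mathbb{S}(\mathfrak{s}_{-i})$ and $\mathbb{E}(e_{\mathfrak{s}_i,\mathfrak{s}_{i+1}})\leftrightarrow \mathbb{E}(e_{\mathfrak{s}_{-i-1},\mathfrak{s}_{-i}})$, so these cells contribute only the basepoint to the fixed locus. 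The only $I$-invariant cell is the central edge $\mathbb{E}(e_{\mathfrak{s}_{-1},\mathfrak{s}_1})$, on which $I$ acts by complex conjugation on the $\mathbb{C}$-factor combined with the reflection $t\mapsto 1-t$ on $[0,1]$; its fixed set is the real sphere $(\mathbb{R}^{(w(\mathfrak{s}_{-1})+h)/2})^+$ sitting at $t=1/2$. Desuspending by the $I$-fixed part $\tfrac{h}{2}\mathbb{R}$ of the formal suspension $\tfrac{h}{2}\mathbb{C}$ yields
\[
\mathcal{H}(\gamma,\mathfrak{s}_0)^I \;\simeq\; S^{w(\mathfrak{s}_{-1})/2}
\]
stably, which is a $\mathbb{Z}_2$-homology sphere. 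Combined with the previous paragraph, $SWF_R(K)$ is itself a $\mathbb{Z}_2$-homology sphere, so \Cref{lem:keylemma2} immediately gives $\delta_R(K)=\underline{\delta}_R(K)=\overline{\delta}_R(K)$.

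To finish, I would extract the common value by computing the degree of the $I$-fixed generator in Manolescu's grading convention. The generator sits in lattice degree $w(\mathfrak{s}_{-1})/2$, and matching this with the $\delta_R$-grading amounts to bookkeeping the shifts $\dim_{\mathbb{R}}(V^0_{-\lambda})^I$, $\dim_{\mathbb{C}}(W^0_{-\lambda})^I$, and the Manolescu correction term $n(\Sigma_2(K),\mathfrak{s}_0,g)$. Plugging in $w(k)=(k^2+|\Gamma|)/4$ together with the plumbing formula $\bar{\mu}(Y_\Gamma)=(\sigma(\Gamma)-w_{\mathrm{Wu}}^2)/8$ for the Neumann--Siebenmann invariant, and using that in the symmetric plumbing the distinguished characteristic element $\mathfrak{s}_{-1}$ differs from the spherical Wu class $w_{\mathrm{Wu}}$ only by an explicit element of the lattice, should collapse the arithmetic to $-\tfrac{1}{2}\bar{\mu}(\Sigma_2(K))$. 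The main technical obstacle is precisely this final identification: tracking the grading shifts between Manolescu's Seiberg--Witten Floer setup and the combinatorial lattice framework carefully enough to verify the predicted value.
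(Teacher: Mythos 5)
Your proof follows the paper's approach step-for-step up to the final numerical identification: invoke \Cref{main computation} for the $O(2)$-equivariant map, apply \Cref{lem:keylemma1} to transfer the mod-2 homology isomorphism to $I$-fixed loci, identify $\mathcal{H}(\gamma,\mathfrak{s}_0)^I$ with the central real sphere at $t=1/2$ of the unique $I$-invariant edge, conclude $SWF_R(K)$ is a $\mathbb{Z}_2$-homology sphere, and apply \Cref{lem:keylemma2} to collapse the three real Fr\o yshov invariants to a single value. All of this is correct and matches the paper.

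The one place you stop short is the ``final identification,'' which you flag as the main technical obstacle. You already have the right ingredients, but you do not close the loop. The key point, which the paper uses implicitly, is the following: since the almost $I$-invariant path is built so that $\gamma_\Theta$ consists of a pair of opposite vertices of the unique $J$-invariant lattice cube $\square_J$, the two characteristic vectors $k(\mathfrak{s}_{\pm 1})$ are precisely $\mp w_{\mathrm{Wu}}$, the spherical Wu class, and hence $c_1(\mathfrak{s}_{-1})^2 = w_{\mathrm{Wu}}^2$. Combined with $|\Gamma| = -\sigma(\Gamma)$ for a negative-definite plumbing, the weight of the central edge is
\[
\frac{w(\mathfrak{s}_{-1})}{2} = \frac{1}{8}\bigl(c_1(\mathfrak{s}_{-1})^2 - \sigma(W_\Gamma)\bigr) = \frac{w_{\mathrm{Wu}}^2 - \sigma(\Gamma)}{8} = -\bar{\mu}(\Sigma_2(K)),
\]
so that $\mathcal{H}(\gamma,\mathfrak{s}_0)^I$ is a $\mathbb{Z}_2$-homology sphere carrying exactly the formal desuspension data $[(S^0,0,\bar{\mu}(\Sigma_2(K)))]$ in $\mathfrak{C}_{\mathbb{Z}_2}$; the $\frac{1}{2}$ in the final answer then falls out of the definition of $\delta_R$. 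You say $\mathfrak{s}_{-1}$ ``differs from $w_{\mathrm{Wu}}$ only by an explicit element of the lattice,'' which is too weak for the argument; what is needed and what holds is that they agree up to sign, hence have the same square. Supplying this observation (and its brief justification via the $J$-invariant cube) would complete your proof and make it coincide with the paper's.
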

\begin{proof}
    It follows from the construction in the previous section that the $I$-invariant locus of the $O(2)$-equivariant lattice homotopy type of the double-branched cover $\Sigma_2(K)$ of $K$ is the fixed locus of the ``central sphere'' under the complex conjugation action and thus given by $[(S^0,0,  \bar{\mu}(\Sigma_2(K)))]$ as a $\mathbb{Z}_2$-homotopy type. 
    More precisely, we see 
  \[
   \mathcal{H}(\gamma,\mathfrak{s})^I =( \Gamma_0 \cup \Gamma_{\theta})^I = \Gamma_{\theta}^I = \mathbb{E} ( e_{\mathfrak{s}_{-1}, \mathfrak{s}_{+1}} )^I = \left( \left( \C^{ \frac{1}{8}\left(c_1^2(\mathfrak{s}_{-1}) - \sigma (W_\Gamma)\right)} \right)^+ \right)^I \wedge \left\{ \frac{1}{2} \right\}. 
   \] 
    Note that the $\mathrm{spin}^c$ structure corresponding to the definition of $\bar{\mu}$ invariant is $\fraks_1$ in the previous section. We also note that $c_1(\fraks_1)^2 = c_1(\fraks_{-1})^2$, which follows from the definition of an almost $I$-invariant path.
    Thus, 
     \[
   \mathcal{H}(\gamma,\mathfrak{s})^I  = \left(\left(\C^{- \overline{\mu}(\Sigma_2(K)) }  \right)^+ \right)^I  =  \left(\R^{- \overline{\mu}(\Sigma_2(K)) }  \right)^+  = \left[\left(S^0, 0, \overline{\mu}(\Sigma_2(K)) \right)  \right] \in \mathfrak{C}_{\Z_2}. 
   \] 
    Since both lattice homotopy types and Seiberg--Witten homotopy types are finite $\Z_2$-spectra--with $\Z_2$ acting by $I$ on the former and by $I = j\circ \wt{\tau}$ on the latter--it follows from \Cref{main computation} and \Cref{lem:keylemma1} that $SWF^I(K)$ is a $\mathbb{Z}_2$-homology sphere of dimension $-\bar{\mu}(\Sigma_2(K))$. Therefore we deduce from \Cref{lem:keylemma2} that
    \[
    \delta_R(K)=\underline{\delta}_R(K)=  \overline{\delta}_R(K)= -\frac{1}{2}\bar{\mu}(\Sigma_2(K)),
    \]
    as desired.
\end{proof}

Using \Cref{thm:realsphere}, we can compute the $\overline{\delta}_R$ invariant for the torus knot $T_{2n,1-20n}$.

\begin{cor}\label{cor:realinvs}
    Let $n\ge 1$ be an odd integer. Then we have
    \[
    \overline{\delta}_R(T_{2n,1-20n}) = -\frac{9}{8}. 
    \]
\end{cor}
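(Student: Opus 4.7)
The plan is to derive the corollary from \Cref{thm:realsphere} combined with the explicit computation of the Neumann--Siebenmann invariant carried out in \Cref{subsec:topinvs}. A minor subtlety is that $T_{2n,1-20n}$ has a negative strand and therefore does not itself meet the positivity hypothesis $p,q>0$ of \Cref{thm:realsphere}; my argument will naturally proceed through its mirror $T_{2n,20n-1}$.

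First, I would apply \Cref{thm:realsphere} to $K':=T_{2n,20n-1}$. This is a positive torus knot with $p=2n$ even, so the theorem yields
\[
\delta_R(K')=\underline{\delta}_R(K')=\overline{\delta}_R(K') = -\tfrac{1}{2}\bar{\mu}(\Sigma(2,2n,20n-1)).
\]
Plugging in the value $\bar{\mu}(\Sigma(2,2n,20n-1))=-\tfrac{18}{8}$, which \Cref{subsec:topinvs} establishes for every positive odd $n$, this becomes $\underline{\delta}_R(T_{2n,20n-1})=\tfrac{9}{8}$.

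Next, I would pass from $K'$ to its mirror $T_{2n,1-20n}$ using the mirror duality $\overline{\delta}_R(\bar K)=-\underline{\delta}_R(K)$ for real Fr\o yshov invariants, an analog of the familiar identity $\overline{d}(-Y)=-\underline{d}(Y)$ from involutive Heegaard Floer theory. This yields
\[
\overline{\delta}_R(T_{2n,1-20n}) = -\underline{\delta}_R(T_{2n,20n-1}) = -\tfrac{9}{8},
\]
as claimed.

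The step requiring the most care is the invocation of mirror duality at the level of real Fr\o yshov invariants: one must verify that the $I$-fixed locus of the Seiberg--Witten Floer spectrum behaves as expected under Spanier--Whitehead duality, tracking the interplay between complex conjugation on spinors and orientation reversal of the branched cover. A self-contained alternative that sidesteps this would be to observe that $SWF_R(T_{2n,1-20n})$ is the Spanier--Whitehead dual of $SWF_R(T_{2n,20n-1})$; since the proof of \Cref{thm:realsphere} exhibits the latter as a $\mathbb{Z}_2$-homology sphere, so is the former, so \Cref{lem:keylemma2} gives $\delta_R=\underline{\delta}_R=\overline{\delta}_R$ for $T_{2n,1-20n}$, after which any one of them may be evaluated from the known value $\bar{\mu}(\Sigma_2(T_{2n,1-20n}))=\tfrac{18}{8}$, obtained via $\bar{\mu}(-Y)=-\bar{\mu}(Y)$.
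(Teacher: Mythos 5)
Your proposal is correct and follows essentially the same route as the paper: apply \Cref{thm:realsphere} to the positive torus knot $T_{2n,20n-1}$, plug in $\bar{\mu}(\Sigma(2,2n,20n-1)) = -\tfrac{18}{8}$ from \Cref{subsec:topinvs} to get $\underline{\delta}_R(T_{2n,20n-1}) = \tfrac{9}{8}$, and then use the mirror duality $\overline{\delta}_R(T_{2n,1-20n}) = -\underline{\delta}_R(T_{2n,20n-1})$ (the paper cites this as Lemma 3.28 of Konno--Miyazawa--Taniguchi, which is proved exactly via the Spanier--Whitehead duality of $SWF_R$ for mirror knots that you sketch as your ``alternative''). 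Your explicit remark about why one must pass through the mirror is a point the paper leaves implicit but is handled the same way.
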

\begin{proof}
    Using \Cref{thm:realsphere} and computations from \Cref{subsec:topinvs}, we see that
    \[
    \underline{\delta}_R(T_{2n,20n-1}) = -\frac{1}{2}\bar{\mu}(\Sigma_2(K)) = \frac{9}{8}.
    \]
    By \cite[Lemma 3.28]{KMT:2023}, we deduce that
    \[
    \overline{\delta}_R(T_{2n,1-20n}) = -\underline{\delta}_R(T_{2n,20n-1}) = -\frac{9}{8}.\qedhere
    \]
\end{proof}

\subsection{Proof of \Cref{thm:main}}
We can now prove the main theorem, using our computations of real Fr\o yshov invariants of $T_{2n,1-20n}$.
\begin{proof}[Proof of \Cref{thm:main}]
In order to make use of \Cref{Theorem B for links}, we have to check that its assumptions are satisfied. Consider the smooth concordance, as described in \Cref{prop:cableof41totorusknot}, $S_n$ from $E_{2n,1}$ to $T_{2n,1-20n}$ in a twice-punctured $2\mathbb{CP}^2$, which is denoted by $X$. This concordance has the homology class $(2n,6n)$. We calculate:
\[
\begin{split}
    b_2^+(\Sigma_2(S_n)) - b_2^+(X) 
    &= b_2^+(X) -\frac{1}{4} [S_n]^2 + \frac{1}{2} \sigma (T_{2n,1-20n})\\
    &= 2-\frac{1}{4}\left((2n)^2+(6n)^2\right)+\frac{1}{2} (20n^2-2) \\
    &= 2-10n^2+(10n^2-1) \\&= 1.
\end{split}
\]
Hence the assumptions are satisfied, and as before we get
\[
 \underline{\delta}_R(E_{2n,1}) -\frac{1}{16}\left(  2 \sigma (X) - \frac{1}{2} [S_n]^2 + \sigma (T_{2n,1-20n})\right) \leq \bar{\delta}_R(T_{2n,1-20n}).
\]
Using
\[
\begin{split}
    -\frac{1}{16}\left(  2 \sigma (X) - \frac{1}{2} [S_n]^2 + \sigma (T_{2n,1-20n})\right) &= -\frac{1}{16} \left( 2\cdot 2-\frac{1}{2}\left((2n)^2+(6n)^2\right) + (20n^2-2)  \right) \\
    &= -\frac{1}{8},
\end{split}
\]
and \Cref{cor:realinvs}, we conclude that $$ \underline{\delta}_R(E_{2n,1}) \leq -1.$$ 
The proof is complete by applying Corollary~\ref{cor:negativeimmersedpoints}.
\end{proof}

\begin{rem}
As it is observed in \cite[Proposition 4.9]{KMT:2023}, the map sends a knot concordance class to the $(G=\Z_4, H=\Z_2)$ local equivalence class of the real Floer homotopy type giving a homomorphism: 
    \[
[K] \mapsto [SWF_R(K)]_{\mathrm{loc}}\colon   \mathcal{C} \to \mathcal{LE}_G.   
\]
We have observed that for any torus knot $T_{p, q}$, we have 
$
[SWF_R(T_{p, q} )]_{{\mathrm{loc}} }$ is equal to some sphere spectrum. It is also true for any two-bridge knot. 
In other words, all torus knots are sent to the subgroup in $\mathcal{LE}_G$ generated by sphere spectrums. Note that the invariants $\delta_R, \underline{\delta}_R, \overline{\delta}_R$ factor through the group homomorphism $SWF_R \colon  \mathcal{C} \to \mathcal{LE}_G $. Moreover, one can use the fact that $E_{2,1}$ bounds nullhomologous disks in both $\mathbb{CP}^2$ and $\overline{\mathbb{CP}}^2$, and apply \cite[Theorem 3.23]{KMT:2023} to conclude that $\delta_R(E_{2n,1}) = 0$. Then we have $\delta_R( E_{2n,1}) > \underline{\delta}_R ( E_{2n,1})$ and  $SWF^R_{\mathrm{loc}} (E_{2n,1})$ is not equal to some sphere spectrum in the group $\mathcal{LE}_G$. 
It is already observed in \cite[Example 1.11]{KMT:2023} that certain Montesinos knots also satisfy this property.
\end{rem}

\subsection{Arborescent knots}\label{sec:Montesino}
Let $\Gamma$ be a negative-definite almost rational plumbing graph. Recall that the associated plumbed 4-manifold $W_\Gamma$ is defined by gluing together the total spaces of disk bundles $p_v\colon E_v \rightarrow S^2$ whose Euler number is equal to the weight of the node $v$, where $v$ runs over all nodes in $\Gamma$. For each disk bundle $p_v$, we consider the complex conjugation action $\tau_v$, which acts by reflection on the base $S^2$ and also on each fiber of $p_v$. Clearly, $\tau_v$ is an orientation-preserving smooth involution on the total space of $p_v$ for each node $v$, and these local involutions can be glued together to obtain an orientation-preserving smooth involution $\tau$ on $W_\Gamma$. It is straightforward to observe that $W_\Gamma / \tau \cong S^3$, and the projection map $W_\Gamma \rightarrow S^3$ is a 2-fold branched covering, whose branching locus is a knot in $S^3$. Knots arising in this way are called \emph{arborescent knots}. We note that, when $\Gamma$ is a plumbing graph obtained via the Seifert algorithm from the Brieskorn sphere $\Sigma(a_1, \ldots, a_n)$, defined by
\[
\Sigma(a_1, \dots, a_n) = \left\{ (z_1, \dots, z_n) \in \mathbb{C}^n \,\middle\vert\, B \begin{pmatrix} z_1^{a_1} \\ \vdots \\ z_n^{a_n} \end{pmatrix} = 0, \quad |z_1|^2 + \cdots + |z_n|^2 = 1 \right\} \subset S^{2n-1}
\]
for any $(n-2)$-by-$n$ complex matrix $B$ whose maximal minors are nonzero,\footnote{This definition is independent of $B$, up to diffeomorphism.} the action of $\tau$ on $\partial W_\Gamma$ coincides with the complex conjugation action, i.e.,
\[
(z_1, \dots, z_n) \mapsto (\bar{z}_1, \dots, \bar{z}_n).
\]

Let $\Gamma$ be a negative-definite AR-graph whose corresponding boundary involution is given by complex conjugation on the Brieskorn sphere $\Sigma(a_1, \dots, a_n)$, which can be realized as the double branched cover of a Montesinos knot. In this case, all paths are strict $I$-invariant in the sense that every $\mathrm{spin}^c$ structure contained in any path is $I$-invariant. In fact, every $\mathrm{spin}^c$ structure on $W_\Gamma$ is $I$-invariant, as first observed in~\cite{alfieri2020connected}. Hence, unlike the case of almost $I$-invariant paths, we may simply take any path $\gamma$ that carries the lattice homology. We will construct an $O(2)$-equivariant map
\[
\mathcal{T} \colon \mathcal{H}(\gamma, \mathfrak{s}_0) \to SWF(\Sigma_2(K)).
\]

In the context of arborescent  knots, we define a class of $O(2)$-actions on the path homotopy type:
\[
\mathcal{H}(\gamma, \mathfrak{s}) = \bigcup_{1 \leq i \leq m} \mathbb{S}(\mathfrak{s}_i) \cup \bigcup_{1 \leq i \leq m - 1} \mathbb{E}(e_{\mathfrak{s}_i, \mathfrak{s}_{i - 1}}).
\]
We define the involution $I$ on the spheres and edges by complex conjugation:
\[
I \colon \mathbb{S}(\mathfrak{s}_i) \to \mathbb{S}(\mathfrak{s}_i), \qquad
I \colon \mathbb{E}(e_{\mathfrak{s}_i, \mathfrak{s}_{i - 1}}) \to \mathbb{E}(e_{\mathfrak{s}_i, \mathfrak{s}_{i - 1}}).
\]
This defines a well-defined $O(2)$-action on $\mathcal{H}(\gamma, \mathfrak{s})$.

Let $\Gamma$ be an almost rational negative-definite plumbing graph, $K$ be the associated arborescent knot, and $\mathfrak{s}$ be the unique spin structure on $\Sigma_2(K)$. As in the case of torus knots, for each vertex $\fraks_i$ of a path $\gamma$ which carries the lattice homology of $(\Gamma,\mathfrak{s})$, we associate the corresponding Bauer--Furuta invariant 
\[
BF_{W_\Gamma, \fraks_i} \colon \mathbb{S}(\fraks_i) \to \Sigma^{\frac{h}{2}\mathbb{C} } SWF(Y, \fraks)
\]
with stabilizations by ${\R}, {\wt{\R}} $ and ${\C}$, which is $O(2)$-equivariant.
For the maps corresponding to edges, we use the $O(2)$-adjunction relation stated below.

The following is the $O(2)$-adjunction relation, which can be regarded as an $O(2)$-equivariant version of  \cite[Proposition 3.15]{DSS2023}: 

\begin{prop}\label{adjunction I ver}
    Let $(X, \fraks)$ be a $\mathrm{spin}^c$
cobordism from $(Y_0, s_0)$ to $(Y_1, \fraks_1)$ with $b_1(X)=
b_1(Y_i)=0$. Suppose there is a smooth involution $\tau$ on $X$ such that 
\[
\tau^* \fraks \cong \overline{\fraks}. 
\]
Suppose that we have an embedded sphere $S$ in $X$ with $S\cdot S < 0$ and with $\tau(S) =S$ so that $\tau|_S \colon S \to S$ is the complex conjugation on $\C\mathbb{P}^1$.   
Let $L$ be
the complex line bundle on $X$ with  $c_1(L)= PD(S)$.
Set 
\[
\fraks' = \fraks\otimes L\qquad\text{ and } \qquad
n:= \frac{\langle c_1(\fraks) ,[S]\rangle +[S]^2}{2}.
\]
We write the $O(2)$-equivariant Bauer--Furuta invariants of $\fraks$ and $\fraks'$ 
as maps  
\begin{align*}
   &  BF_{X,\fraks }   \colon \left(\C^{\frac{c_1(\fraks)^2 - \sigma(X)}{8}} \right)^+ \wedge SWF(Y_0)  \to SWF(Y_1) \\
      & BF_{X,\fraks' }  \colon \left(\C^{\frac{c_1(\fraks')^2 - \sigma(X)}{8}} \right)^+\wedge SWF(Y_0)  \to SWF(Y_1). 
\end{align*}
Then, $BF_{X,\fraks } \text{ and } U^n BF_{X,\fraks' }$ are $O(2)$-stably homotopic up to certain coordinate changes if $n>0$, and the same statement holds for $U^{-n}BF_{X,\fraks } \text{ and }  BF_{X,\fraks' }$ if $n<0$.
Here $U$ denotes the stable homotopy class of a map
\[
X \to \Sigma^{\C } X 
\]
obtained as $x \mapsto (0, x)$.
\end{prop}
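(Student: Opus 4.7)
The plan is to upgrade the standard $S^1$-equivariant adjunction relation (\cite{DSS2023}, Proposition 3.15) to the $O(2)$-equivariant setting by tracking the involution $I = j\tau$ throughout. Since the $S^1$-equivariant statement already supplies the homotopy relating $BF_{X, \fraks}$ with $U^n \, BF_{X, \fraks'}$, the real task is to verify that every construction used in that proof can be made $I$-equivariant, at worst up to a coordinate change on the domain in the sense used in \cref{structual o2}.

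First, I would check that the modified spin$^c$ structure $\fraks'$ is itself compatible with $\tau$. Because $\tau$ preserves $S$ setwise and acts on $S \cong \mathbb{CP}^1$ by complex conjugation, the line bundle $L$ with $c_1(L) = PD(S)$ satisfies $\tau^* L \cong \overline{L}$; consequently $\tau^* \fraks' \cong \overline{\fraks'}$, so $\fraks'$ carries a real structure and one can choose an $I$-lift on the spinor bundle of $\fraks'$ compatible with the chosen lift on $\fraks$. Next, the stable map $U$ is manifestly $O(2)$-equivariant, being the inclusion at the origin of an additional $\C$-summand whose $I$-action is complex conjugation, which is exactly the standard $O(2)$-representation $\C$ used throughout the paper.

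The heart of the argument is to make the $S^1$-equivariant homotopy comparing the two Bauer--Furuta invariants into an $O(2)$-equivariant one. Concretely, one fixes a $\tau$-invariant tubular neighborhood of $S$ in $X$ and a $\tau$-invariant one-parameter family of perturbations interpolating between the equations defined using $\fraks$ and those defined using $\fraks'$; $O(2)$-equivariant finite-dimensional approximation (as in \cref{o(2)BF}) then yields a homotopy of $O(2)$-equivariant maps from $BF_{X, \fraks}$ to $U^n BF_{X, \fraks'}$. The essential geometric input is that the local model near $S$, a disk bundle over $\mathbb{CP}^1$ equipped with an anti-holomorphic involution whose fixed set is the real equator $\mathbb{RP}^1$, is canonically $\tau$-equivariant, so the blow-up/vanishing-section mechanism in the non-equivariant proof equivariantizes cleanly.

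The step I expect to be the main obstacle is the identification of the $I$-representations on the two formally different domains $\left(\C^{(c_1(\fraks)^2 - \sigma(X))/8}\right)^+$ and $\Sigma^{n\C}\left(\C^{(c_1(\fraks')^2 - \sigma(X))/8}\right)^+$. A priori, these need not agree on the nose as $O(2)$-representations; the extra $n$ complex coordinates, which come from the local normal-bundle model along $S$, carry an $I$-action that depends on a choice of $\tau$-equivariant trivialization and may differ from the standard one by an element of the $O(2)$-automorphism group of $\C^n$. Verifying that this discrepancy lies precisely within the class of permitted coordinate changes (see \cref{coordinate change}) is the delicate point; this is exactly what the phrase ``up to certain coordinate changes'' in the statement is designed to absorb, and making it rigorous will require a careful bookkeeping of the $\tau$-action on the spectral sections used in the finite-dimensional approximation.
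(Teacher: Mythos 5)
Your proposal does not match the paper's argument, and more importantly it skips the one genuinely nontrivial step. You propose to take the known $S^1$-equivariant homotopy from \cite{DSS2023} and directly promote it to an $O(2)$-equivariant homotopy by choosing a ``$\tau$-invariant family of perturbations,'' asserting that the local mechanism near $S$ ``equivariantizes cleanly.'' That assertion is exactly where the content is, and nothing in your write-up supports it. An $S^1$-equivariant homotopy between $O(2)$-maps need not lift to an $O(2)$-equivariant one: the obstruction lives on the $\langle I\rangle$-fixed-point locus, where the two maps become maps between real spheres whose degrees must be compared, and there is no reason a priori that a $\tau$-invariant choice of perturbations controls this.

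The paper instead proceeds by (i) decomposing $X = \nu(S)\cup(X\smallsetminus \mathrm{int}\,\nu(S))$ $\tau$-equivariantly and invoking an equivariant gluing theorem, so that the two Bauer--Furuta invariants agree outside $\nu(S)$ and the problem reduces to the local maps on $\nu(S)$; (ii) applying the equivariant Hopf classification theorem (\cref{TD}), which reduces the question to comparing the mapping degrees of the $S^1$- and $\langle I\rangle$-fixed-point restrictions; (iii) observing that the $S^1$-fixed degree comparison is automatic; and (iv) proving that $\deg BF_{\nu(S),\fraks|_{\nu(S)}}^I = \pm 1$. Step (iv) is the heart of the proof and is entirely absent from your outline: it requires \cref{equivariant emb}, a Heegaard Floer argument pinning down the $\mathrm{spin}^c$ structure via cobordism maps to construct an equivariant embedding $O(d)\hookrightarrow \#_{-d}\overline{\mathbb{CP}}^2$, an equivariant connected-sum formula, and Miyazawa's computation of the $I$-degree of the $\overline{\mathbb{CP}}^2$ Bauer--Furuta invariant. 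Your concern about identifying the $O(2)$-representations on the two domains is also somewhat off target: the ``coordinate change'' in the statement is not a representation-theoretic bookkeeping device but a precomposition by an odd permutation of $\C^n$ (see \cref{coordinate change}), whose purpose is to absorb the $\pm 1$ ambiguity in the $I$-degree computed in step (iv).
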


\begin{rem}\label{coordinate change}
    The meaning of ``up to certain coordinate changes" in \cref{adjunction I ver} is the following: if we need, after precomposing an odd permutation 
    \[
   (z_1, z_2, z_3,  \dots, z_n) \mapsto (z_2, z_1, z_3,  \dots, z_n) \colon  \C^n \to \C^n,  
    \]
    the maps $BF_{X,\fraks } \text{ and } U^m BF_{X,\fraks' }$ are $O(2)$-equivariantly stably homotopic. 
\end{rem}
The proof is similar to that given in the proof of \cite[Proposition 3.15]{DSS2023}. The only difference is: we need to analyze the Bauer--Furuta invariants for $I$-fixed point parts in our $O(2)$-setting. 

\begin{proof}[Proof of \cref{adjunction I ver}]
    
We first decompose $X$ into 
\[
X = \nu (S) \cup  \left(X \smallsetminus \operatorname{int} \nu (S)\right)
\]
$\tau$ equivariantly, where $\nu(S)$ is the disk normal bundle of $S$ identified with a tubular neighborhood of $S$.
Then, the equivariant version of the gluing theorem implies 
\begin{align}\label{equ gluing}
BF_{X, \fraks } =BF_{X \smallsetminus \operatorname{int} \nu (S), \fraks |_{X \smallsetminus \operatorname{int} \nu (S)}} \circ BF_{\nu(S), \fraks|_{\nu(S)} } . 
\end{align}
This follows from the gluing theorem proven by Miyazawa in \cite[Theorem 2.12]{Mi23}.

Since the involution $\tau$ preserves the standard positive scalar curvature metric on the lens space $\partial \nu(S)$, one can regard 
$BF_{\nu(S), \fraks|_{\nu(S)} }$ as an $O(2)$-equivariant map 
\[
BF_{\nu(S), \fraks|_{\nu(S)} } \colon V^+ \to W^+
\]
for some $O(2)$-representation spaces. 
Since $\fraks'$ and $\fraks$ are the same on $X \smallsetminus \operatorname{int} \nu (S)$ and we have \eqref{equ gluing}, it is sufficient to give an $O(2)$ homotopy between 
\[
BF_{\nu(S), \fraks|_{\nu(S)} } \colon V^+ \to W^+ \qquad\text{ and }\qquad U^m \circ BF_{\nu(S), \fraks'|_{\nu(S)} }\colon V^+ \to W^+ 
\]
which are maps between spheres when $m\geq 0$. The case $m <0$ follows from completely the same argument. 

We will prove these maps $BF_{\nu(S), \fraks|_{\nu(S)} }$ and $U^m BF_{\nu(S), \fraks'|_{\nu(S)} }$ are $O(2)$-stably homotopic to  $O(2)$-equivariant maps obtained from the inclusions
\[
\iota\colon \C^n \hookrightarrow  \C^{n+l}
\]
when $l>0$ and  
\[
\pm \id \colon \C^n \to \C^n
\]
when $l=0$ up to certain coordinate changes.  Here we are using $\nu(S)$ is negative-definite. 

We shall use the equivariant version of Hopf's classification result stated in \cref{TD} to make an $O(2)$-homotopy between $BF_{\nu(S), \fraks|_{\nu(S)} }$ and $BF_{\nu(S), \fraks'|_{\nu(S)} }$. 
We have two cases: 
\begin{itemize}
    \item The case of $\dim V^I <  \dim W^I$ and 
    \item The case of $\dim V^I = \dim W^I$.
\end{itemize}
In the first case, we only need to see 
\[
\deg BF_{\nu(S), \fraks|_{\nu(S)} }^{S^1} = \deg  BF_{\nu(S), \fraks'|_{\nu(S)} }^{S^1}. 
\]
This is obvious since $S^1$-invariant part of the Bauer--Furuta invariant does not depend on the choices of $\mathrm{spin}^c$ structures. In the second case, we will prove 
\[
\deg BF_{\nu(S), \fraks|_{\nu(S)} }^{I} =\pm 1,  
\]
which is a non-trivial computation. Note that, in the second case, the corresponding $\mathrm{spin}^c$ structure $\mathfrak{s}$ on $\nu(S)$ satisfies
\[
c_1^2(\mathfrak{s}) - \sigma(\nu(S)) = 4  d\big(-\partial \nu(S) = -L(p, 1),\, \mathfrak{s}|_{-L(p, 1)}\big)
\]
for some integer $p$, where $d$ denotes the Heegaard Floer $d$-invariant~\cite[Section~4]{OzSz03b}. This condition is equivalent to having a sharp Fr{\o}yshov inequality. In this case, the $O(2)$-Bauer--Furuta invariant can be written as
\[
BF_{\nu(S), \mathfrak{s}|_{\nu(S)}} \colon  
\left(\C^{\frac{c_1^2(\mathfrak{s}) - \sigma(\nu(S))}{8}}\right)^+ 
\to 
\left(\C^{\frac{d\left(L(p, 1),\, \mathfrak{s}|_{L(p, 1)}\right)}{2}}\right)^+.
\]
In order to compute the degrees, we use the following key lemma:

\begin{lem}\label{equivariant emb}
Let $p$ be a negative integer, and let $O(p)$ denote the total space of the disk bundle over $S^2$ with Euler number~$p$. Define an involution $\tau \colon O(p) \to O(p)$ as complex conjugation on both the base and fiber directions. Let $\mathfrak{s}$ be a $\mathrm{spin}^c$ structure on $O(p)$ satisfying $\tau^* \mathfrak{s} \cong \overline{\mathfrak{s}}$, and suppose that
\[
c_1^2(\mathfrak{s}) - \sigma(O(p)) = -4  d\left(\partial O(p) = L(p, 1),\, \mathfrak{s}|_{L(p, 1)}\right).
\]
Then, there exists an equivariant embedding
\[
O(p) \hookrightarrow \#_{-p} \overline{\mathbb{CP}}^2,
\]
extending the $\mathrm{spin}^c$ structure $\mathfrak{s}$, where the action on $\#_{-p} \overline{\mathbb{CP}}^2$ is the connected sum of the complex conjugations and
\[
c_1(\mathfrak{s}) = (\pm 1, \dots, \pm 1).
\]
Here, the $\pm$ signs need not be synchronized.
\end{lem}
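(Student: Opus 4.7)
The strategy is to construct the embedding by iterated equivariant blow-ups along a real line, and then match the spin$^c$ structure by counting the admissible values of $c_1$.

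First, for the embedding, I would start with $\overline{\mathbb{CP}}^2$ equipped with the standard complex conjugation $\tau_0$, whose fixed locus is $\mathbb{RP}^2$. Choose a complex line $L_0 \subset \overline{\mathbb{CP}}^2$ which is defined over $\mathbb{R}$ (for instance $\{[x:y:0]\}$), so that $\tau_0(L_0) = L_0$ and $\tau_0|_{L_0}$ is the complex conjugation on $L_0 \cong \mathbb{CP}^1$. Note that $L_0^2 = -1$. Pick $|d|-1$ points $p_1,\ldots,p_{|d|-1}$ on $L_0 \cap \mathbb{RP}^2$, and iteratively blow up these points. Since each $p_i$ is a fixed point of the ambient involution, each blow-up is equivariant, and the result is $\#_{-d}\overline{\mathbb{CP}}^2$ equipped with an involution $\tau$ which is precisely the connected sum of complex conjugations. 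The proper transform $\tilde{L}$ of $L_0$ is a smoothly embedded $\tau$-invariant sphere on which $\tau$ acts as complex conjugation, and its self-intersection has been lowered by $1$ at each blow-up to $-1 - (|d|-1) = d$. A $\tau$-invariant tubular neighborhood of $\tilde L$ is equivariantly diffeomorphic to $O(d)$, yielding the desired equivariant embedding $\iota : O(d) \hookrightarrow \#_{-d}\overline{\mathbb{CP}}^2$.

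Second, for the spin$^c$ data, I would observe that $[\tilde L] = H_1 + \cdots + H_{|d|}$ with $H_i^2 = -1$ and $H_i\cdot H_j = 0$ for $i\neq j$, so any class $c = \sum \epsilon_i H_i$ with $\epsilon_i \in \{\pm 1\}$ is characteristic and satisfies $c^2 = d$. These are exactly the first Chern classes of $2^{|d|}$ spin$^c$ structures $\tilde{\mathfrak{s}}$ on $\#_{-d}\overline{\mathbb{CP}}^2$, and under $\iota^\ast$ the value $c_1(\iota^\ast \tilde{\mathfrak{s}}) \cdot [S^2] = -\sum \epsilon_i$ ranges over $\{-|d|, -|d|+2, \ldots, |d|-2, |d|\}$. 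On the other hand, a standard computation with the known $d$-invariants of $L(|d|, 1)$ shows that the spin$^c$ structures $\mathfrak{s}$ on $O(d)$ satisfying the sharp Frøyshov equality
\[
c_1^2(\mathfrak{s}) - \sigma(O(d)) = -4 d\bigl(L(|d|,1),\mathfrak{s}|_{L(|d|,1)}\bigr)
\]
are precisely those whose characteristic integer $c_1(\mathfrak{s})\cdot [S^2]$ lies in that same set $\{-|d|, -|d|+2,\ldots,|d|\}$. Thus one can match $\mathfrak{s}$ to an appropriate choice of signs $(\epsilon_1,\ldots,\epsilon_{|d|})$ and obtain a global spin$^c$ structure $\tilde{\mathfrak{s}}$ on $\#_{-d}\overline{\mathbb{CP}}^2$ with $c_1(\tilde{\mathfrak{s}}) = (\pm 1,\ldots,\pm 1)$ whose restriction is $\mathfrak{s}$.

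Finally, I would verify the equivariance $\tau^\ast \tilde{\mathfrak{s}} \cong \tilde{\mathfrak{s}}$ (note the lemma asks for an extension of $\mathfrak{s}$ which itself satisfies $\tau^\ast\mathfrak{s}\cong\bar{\mathfrak{s}}$, and we need the same for $\tilde{\mathfrak{s}}$ in the weaker sense $\tau^\ast\tilde{\mathfrak{s}}\cong\bar{\tilde{\mathfrak{s}}}$). Since the complex conjugation on each $\overline{\mathbb{CP}}^2$ summand, while orientation-preserving on the 4-manifold, reverses the orientation of any invariant complex line $L \cong \mathbb{CP}^1$ as a surface, we have $\tau_\ast H_i = -H_i$ and hence $\tau^\ast H_i = -H_i$ on $H^2(\#_{-d}\overline{\mathbb{CP}}^2;\mathbb{Z})$. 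Consequently $\tau^\ast c_1(\tilde{\mathfrak{s}}) = -c_1(\tilde{\mathfrak{s}}) = c_1(\bar{\tilde{\mathfrak{s}}})$, and because $H^2(\#_{-d}\overline{\mathbb{CP}}^2;\mathbb{Z})$ is torsion-free, the spin$^c$ structure is determined by $c_1$, giving $\tau^\ast\tilde{\mathfrak{s}}\cong\bar{\tilde{\mathfrak{s}}}$ as required.

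The main obstacle I anticipate is the bookkeeping in the second step: carefully verifying that the set of values of $c_1\cdot[S^2]$ realized by characteristic vectors $(\pm 1,\ldots,\pm 1)$ on $\#_{-d}\overline{\mathbb{CP}}^2$ exactly matches the set of values determined by the sharp Frøyshov condition on $O(d)$, which boils down to identifying the correct enumeration of the $d$-invariants of $L(|d|,1)$ with the distinguished characteristic integers of the sphere $\tilde L$.
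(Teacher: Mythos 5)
Your proposal is correct but takes a genuinely different route from the paper's at both key steps, so let me compare them.

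\emph{The embedding.} You construct $\#_{-d}\overline{\mathbb{CP}}^2$ by equivariantly blowing up a real complex line in $\overline{\mathbb{CP}}^2$ at $|d|-1$ real points and taking a tubular neighborhood of the proper transform. The paper instead builds the same manifold by attaching $(-1)$-framed 2-handles along meridians of the unknot that are components of a strongly invertible link, capping with a 4-handle, and performing equivariant blowdowns. The two constructions are dual to each other: your blow-ups are the paper's blowdowns run in reverse. Both correctly yield an equivariant $O(d)\hookrightarrow \#_{-d}\overline{\mathbb{CP}}^2$ with $\tau$ the connected sum of conjugations, and both leave implicit the verification that the restriction of the ambient involution to the tubular neighborhood of the sphere is the model involution on $O(d)$ (this point requires, but admits, a short argument in either version).

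\emph{The $\mathrm{spin}^c$ matching.} This is where you diverge more substantially. The paper avoids any lens space $d$-invariant computation: it observes that the sharp Fr\o yshov condition forces the Heegaard Floer cobordism map $\hat{F}_{O(d),\mathfrak{s}}$ to be an isomorphism, then factors $W_d\smallsetminus(O(d)\sqcup B^4)$ through a sequence of standard negative-definite cobordisms $X_{n-1}:L(n-1,1)\to L(n,1)$, chooses $\mathrm{spin}^c$ structures on each making the cobordism maps isomorphisms, and then compares with the factorization of the resulting $\hat{F}_{W_d}$ through $\overline{\mathbb{CP}}^2$ summands to conclude $c_1 = (\pm1,\dots,\pm1)$. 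You instead argue directly: count that the characteristic vectors $(\pm1,\dots,\pm1)$ restrict to produce $c_1\cdot[S^2]\in\{-|d|,\dots,|d|\}$, and separately claim that the sharp Fr\o yshov condition on $O(d)$ picks out the same set. You flag the second claim as the main obstacle, but it is in fact a clean computation: with the convention $d(L(n,1),i)=\tfrac{1}{4}-\tfrac{(2i-n)^2}{4n}$ and the identification of the restriction of the class with $c_1\cdot[S^2]=m$ by $m=2i-n$, the sharp equality $-m^2/n+1=4\,d(L(n,1),[m])$ holds identically for $m\in\{-n,\dots,n\}$ and strictly fails once $|m|>n$ (since the restricted $d$-invariant only sees $m$ modulo $2n$). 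So your approach is both correct and arguably more elementary than the paper's, at the cost of depending on an explicit lens space $d$-invariant formula rather than the soft structural input that the paper uses.

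Finally, your concluding paragraph verifying $\tau^*\tilde{\mathfrak{s}}\cong\bar{\tilde{\mathfrak{s}}}$ from $\tau^*=-\mathrm{id}$ on $H^2$ is a nice addition; the paper leaves this compatibility implicit.
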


\begin{proof}
    Let $U$ be the unknot, so that attaching a $2$-handle along $U$ to $B^4$ with framing $p$ yields $O(p)$. Choose a strong inversion of $U$ and denote its rotation axis by $\ell$. Let $m_1, \ldots, m_{-p-1}$ be $0$-framed parallel copies of the meridian of $U$, arranged so that the rotation along $\ell$ induces a strong inversion on each $m_i$.

   We then attach $(-1)$-framed $2$-handles to $O(p)$ along each $m_i$, and cap off the resulting manifold with a $4$-handle. Let $W_p$ denote the resulting closed $4$-manifold. Since we are attaching $2$-handles along each component of a strongly invertible link, the involution $\tau$ extends smoothly to an involution on $W_p$. Furthermore, by performing equivariant blowdowns, we obtain
\[
W_p \cong \#_{-p} \overline{\mathbb{CP}}^2,
\]
where the diffeomorphism is $\tau$-equivariant.

    To prove the statement about extensions of $\mathrm{spin}^c$ structures, we recall that $O(p)$, considered as a cobordism from $L(-p, 1)$ to $S^3$, is negative-definite. It then follows from~\cite[Section~9]{OzSz03b} that the Heegaard Floer cobordism map
\[
F^-_{O(p), \mathfrak{s}} \colon HF^-\big(L(-p, 1),\, \mathfrak{s}|_{L(-p, 1)}\big) \to HF^-(S^3) = \mathbb{Z}_2[U]
\]
becomes a homotopy equivalence after localizing $U^{-1}$. 
Using the degree shift formula in Heegaard Floer homology~\cite[Section~2]{OzSz03b}, one sees that the degree shift  is
\[
\deg F^-_{O(p), \mathfrak{s}} = \frac{c_1(\mathfrak{s})^2 + 1}{4} = \frac{c_1(\mathfrak{s})^2 - \sigma(O(p))}{4},
\]
which, by assumption, is equal to $-d\big(L(p, 1),\, \mathfrak{s}|_{L(-p, 1)}\big)$. 
Since $d(S^3) = 0$ and $L(-p, 1)$ is an L-space, we deduce that $F^-_{O(p), \mathfrak{s}}$ is an isomorphism. Consequently, the hat-flavored cobordism map
\[
\widehat{F}_{O(p), \mathfrak{s}} \colon \widehat{HF}\big(L(-p, 1),\, \mathfrak{s}|_{L(-p, 1)}\big) \to \widehat{HF}(S^3)
\]
is also an isomorphism.

    It is easy to see, via explicit holomorphic triangle counts on Heegaard triple diagrams, that for any $n > 1$ and any $\mathrm{spin}^c$ structure $\mathfrak{s}_0$ on $L(n,1)$, the canonical negative-definite cobordism $X_{n-1}$ from $L(n-1,1)$ to $L(n,1)$ (given by attaching a $(-1)$-framed $2$-handle to a meridian of an $(n-1)$-surgered unknot) admits a $\mathrm{spin}^c$ structure $\tilde{\mathfrak{s}}_0$ extending $\mathfrak{s}_0$, such that the hat-flavored cobordism map
\[
\hat{F}_{W_{n-1,1}, \tilde{\mathfrak{s}}_0} \colon 
\widehat{HF}\left(L(n-1,1),\, \tilde{\mathfrak{s}}_0|_{L(n-1,1)}\right) \to 
\widehat{HF}\left(L(n,1),\, \mathfrak{s}_0\right)
\]
is an isomorphism. By induction on $-p$, this implies that there exists a $\mathrm{spin}^c$ structure $\mathfrak{s}_p$ on the cobordism
\[
W_p \smallsetminus \left(O(p) \sqcup \mathring{B}^4\right) \cong 
X_{-p-1} \cup_{L(-p-1,1)} X_{-p-2} \cup_{L(-p-2,1)} \cdots \cup_{L(2,1)} X_1
\]
such that the cobordism map
\[
\hat{F}_{W_p \smallsetminus O(p), \mathfrak{s}_p} \colon 
\widehat{HF}(S^3) \to 
\widehat{HF}\left(L(-p,1),\, \mathfrak{s}|_{L(-p,1)}\right)
\]
is an isomorphism. Composing this with $\hat{F}_{O(p), \mathfrak{s}}$, we obtain
\[
\hat{F}_{W_p, \tilde{\mathfrak{s}}_p} \colon 
\widehat{HF}(S^3) \to \widehat{HF}(S^3),
\]
which is an isomorphism, where $\tilde{\mathfrak{s}}_p = \mathfrak{s} \cup \mathfrak{s}_p$ is the induced $\mathrm{spin}^c$ structure on $W_p$.
Now, since $W_p \cong \#_{-p} \overline{\mathbb{CP}}^2$, the $\mathrm{spin}^c$ structures on $W_p$ are classified by their first Chern classes. If we write
\[
c_1(\tilde{\mathfrak{s}}_p) = (\lambda_1, \dots, \lambda_{-p}),
\]
where we are choosing the generators of $H^2\left(\#_{-p}\overline{\mathbb{CP}}^2;\mathbb{Z}\right)$ to be our choice of basis for $H^2(W_p;\mathbb{Z})$, then
\[
\hat{F}_{W_p, \tilde{\mathfrak{s}}_p} = 
\hat{F}_{\overline{\mathbb{CP}}^2, \mathfrak{s}_{\lambda_1}} \circ \cdots \circ 
\hat{F}_{\overline{\mathbb{CP}}^2, \mathfrak{s}_{\lambda_{-p}}},
\]
where $\mathfrak{s}_{\lambda_i}$ denotes the unique $\mathrm{spin}^c$ structure on $\overline{\mathbb{CP}}^2$  whose $c_1$ is $\lambda_i$.
It is straightforward to verify, again via holomorphic triangle counts, that the map
\[
\hat{F}_{\overline{\mathbb{CP}}^2, \mathfrak{s}_{\lambda_i}} \colon 
\widehat{HF}(S^3) \to \widehat{HF}(S^3)
\]
is an isomorphism if $\lambda_i$ generates 
$H^2\left(\overline{\mathbb{CP}}^2; \mathbb{Z}\right)$, and is zero otherwise. Therefore, in order for $\hat{F}_{W_p, \tilde{\mathfrak{s}}_p}$ to be an isomorphism, we must have
\[
c_1\left(\tilde{\mathfrak{s}}_p\right) = (\pm 1, \dots, \pm 1).
\]
Since $\tilde{\mathfrak{s}}_p$ extends the given $\mathrm{spin}^c$ structure $\mathfrak{s}$ on $O(p)$, the lemma follows.
\end{proof}




Using \cref{equivariant emb}, we have an equivariant embedding $f \colon \nu(S) \to \#_n \overline{\mathbb{CP}}^2$ for some $n>0$.
Again, from $O(2)$-equivariant gluing formula of the Bauer--Furuta invariants, we have 
\[
BF_{\nu(S), \fraks|_{\nu(S)} } \circ BF_{\#_n \overline{\mathbb{CP}}^2 \smallsetminus \operatorname{int}f(\nu(S)) , \fraks_0|_{\#_n \overline{\mathbb{CP}}^2 \smallsetminus \operatorname{int}f(\nu(S))}  } = BF_{\#_n \overline{\mathbb{CP}}^2, \fraks_0} 
\]
up to $O(2)$-equivariant stable homotopy.  Here $\fraks_0$ denotes the $\mathrm{spin}^c$ structure on $\#_n \overline{\mathbb{CP}}^2$ such that $c_1(\fraks_0) = ( \pm 1, \dots, \pm 1)$.
By an equivariant version of the connected sum formula of $O(2)$-equivariant Bauer--Furuta invariant \cite[Theorem 2.12]{Mi23}, we see 
\[
\deg BF_{\#_n \overline{\mathbb{CP}}^2, \fraks_0}  = \left(\deg BF_{\overline{\mathbb{CP}}^2, \fraks_0|_{\overline{\mathbb{CP}}^2} }\right)^n. 
\]

Let $\tau_{\overline{\mathbb{CP}}^2}$ denote the complex conjugation. Then this preserves the standard positive scalar curvature metric on $\overline{\mathbb{CP}}^2$. So, one can see 
\[
\deg BF^I_{\overline{\mathbb{CP}}^2, \fraks_0|_{\overline{\mathbb{CP}}^2}}  =\pm 1,
\]
which is stated in \cite[Theorem 1.9, the third item]{Mi23}.

Thus, we have 
\[
\deg \left(BF_{\nu(S), \fraks|_{\nu(S)} }^I \right) \cdot \deg \left( BF_{\#_n \overline{\mathbb{CP}}^2 \smallsetminus \operatorname{int}f(\nu(S)) , \fraks_0|_{\#_n \overline{\mathbb{CP}}^2 \smallsetminus \operatorname{int}f(\nu(S))}  } ^I\right) = \deg \left(BF_{\#_n \overline{\mathbb{CP}}^2, \fraks_0} ^I \right)=\pm 1 . 
\]
Thus, one can see 
\[
\deg \left(BF_{\nu(S), \fraks|_{\nu(S)} }^I \right) =\pm 1. 
\]

Note that the base change 
    \[
   (z_1, z_2, z_3,  \dots, z_n) \mapsto (z_2, z_1, z_3,  \dots, z_n) \colon  \C^n \to \C^n
    \]
    changes the sign of the mapping degree $BF_{\nu(S), \fraks|_{\nu(S)} }^I$. Thus, if necessary, after composing it, one can confirm that 
    \[
\deg \left(BF_{\nu(S), \fraks|_{\nu(S)} }^I \right) =1. 
\]
Therefore, up to sign, from \cref{TD}, we see $BF_{\nu(S), \fraks|_{\nu(S)} }$ and $U^m BF_{\nu(S), \fraks'|_{\nu(S)} }$ are $O(2)$-stably homotopic. This completes the proof. 
\end{proof}

\begin{proof}[Proof of \Cref{thm:Montesinos}]
Let $Y = \Sigma_2(K)$. For a fixed, strictly $I$-invariant path in the arborescent knot cases, and for each vertex $\fraks_i$,  we associate the corresponding Bauer--Furuta invariant 
\[
BF_{W_\Gamma, \fraks_i}\colon \mathbb{S}(\fraks_i) \to \Sigma^{\frac{h}{2}\mathbb{C} } SWF(Y, \fraks)
\]
with stabilizations by ${\R}, {\wt{\R}} $ and ${\C}$, which is $O(2)$-equivariant.

Note that $\fraks_i - \fraks_{i-1}$ can be represented by $PD(S)$, where $S$ is the connected sum of certain $2$-handle cores having negative self-intersections. Moreover, from the construction of involution on the graph $4$-manifold, we see the $2$-handle cores are preserved by the involution and it reverses an orientation of each $2$-handle core. 
Therefore, we can apply \cref{adjunction I ver} to $\fraks_i$ and $\fraks_{i-1}$ to obtain an $O(2)$-equivariant homotopy $H$ after composing a base change if necessary. This gives an $O(2)$-equivariant map 
\[
H\colon \mathbb{E}(e_{\fraks_i, \fraks_{i-1}}) \to \Sigma^{\frac{h}{2} \C}  SWF(Y)
\]
which gives a well-defined $O(2)$ equivariant map 
\[
\mathcal{T}^{O(2)}\colon \mathcal{H}(\gamma,\mathfrak{s}_0) \to SWF(Y). 
\]
From the construction, if we forget $I$ action, it is nothing but the construction of the original $S^1$-equivariant map given in \cite{DSS2023}, which is $S^1$-homotopy equivalence. 
\end{proof}

We now prove \Cref{cor:Montesinos}. Note that its proof relies on \Cref{deg comp}, which will be proven in \Cref{subsec: miyazawa}.

\begin{proof}[Proof of \Cref{cor:Montesinos}]
    We recall the process of drawing a graded root (up to overall grading shift, for simplicity) $R$ from a (finite) path $\gamma$ carrying the lattice homology of $(\Gamma,\mathfrak{s})$. Write $\gamma = \{ \mathfrak{s}_1,\ldots,\mathfrak{s}_n\}$, where every $\mathfrak{s}_i$ restricts to $\mathfrak{s}$ on $Y_\Gamma$, and choose characteristic vectors $k_i$ that represent $\mathfrak{s}_i$. Then we consider the sequence
    \[
    k_1^2,\ldots,k_n^2.
    \]
    Let $i_1,\ldots,i_m$ be the indices where the sequence achieves a local maximum. Then, for each $s=1,\ldots,m-1$, we consider the subsequence
    \[
    k_{i_s}^2,k_{i_s+1}^2,\ldots,k_{i_{s+1}}^2;
    \]
    this sequence admits a global minimum, at an index which we denote as $j_s$, such that $k_t^2 \ge k_{j_s}^2$ for any $t=i_s,\dots,i_{s+1}$. Then $R$ is a graded root which consists of leaves $v_1,\dots,v_m$, with $\mathrm{gr}(v_t)=k_{i_t}^2$, and angles $w_1,\dots,w_{m-1}$ between the leaves (where $w_t$ lies between $v_t$ and $v_{t+1}$), with $\mathrm{gr}(w_t)=k_{j_t}^2$.

    Now we calculate the Euler characteristic of the fixed point locus. Since the Euler characteristic can be computed using $\mathbb{Z}_2$-coefficient homology, it follows from \Cref{lem:keylemma1} that
\[
\chi\left(SWF(\Sigma_2(K), \mathfrak{s})^I\right) = \chi\left(\mathcal{H}(\gamma, \mathfrak{s})^I\right).
\]
Recall that $\mathcal{H}(\gamma, \mathfrak{s})^I$ can be constructed combinatorially as follows. For each $t = 1, \dots, m$, we consider the sphere $\mathcal{S}_t$ of dimension $\frac{k_{i_t}^2}{2}$; we then form their bouquet
\[
\mathcal{S} = \mathcal{S}_1 \vee \cdots \vee \mathcal{S}_m.
\]
Next, for each $t = 1, \dots, m-1$, we consider the cylinder $\mathcal{C}_t = S^{\frac{k_{j_t}^2}{2}} \wedge [0,1]$. We attach its ``boundary" $$\mathcal{C}_j^\partial = S^{\frac{k_{j_t}^2}{2}} \vee S^{\frac{k_{j_t}^2}{2}}$$ to $\mathcal{S}$; we do not need to know exactly how it is attached. The resulting pointed space, considered as a spectrum via $\Sigma^\infty$, is homotopy equivalent to $\mathcal{H}(\gamma, \mathfrak{s})^I$.

Clearly, we have
\[
\chi(\mathcal{H}(\gamma, \mathfrak{s})^I) = \tilde\chi(\mathcal{S}) + \sum_{t=1}^{m-1} \tilde\chi(\mathcal{C}_j) - \sum_{t=1}^{m-1} \tilde\chi(\mathcal{C}_j^\partial) = \sum_{t=1}^m \tilde\chi(\mathcal{S}_i) + \sum_{t=1}^{m-1} \left( \tilde\chi(\mathcal{C}_j) - \tilde\chi(\mathcal{C}_j^\partial) \right).
\]
Here, $\tilde\chi$ denotes the reduced Euler characteristic, i.e., $\tilde\chi(X) = \sum_{n \geq 0} \dim_{\mathbb{Z}_2} \tilde{H}_n(X; \mathbb{Z}_2)$ for spaces $X$ of finite type. Since $\tilde\chi(S^n) = (-1)^n$, we deduce that
\[
\chi(\mathcal{H}(\gamma, \mathfrak{s})^I) = \sum_{t=1}^m (-1)^{\frac{k_{i_t}^2}{2}} - \sum_{t=1}^{m-1} (-1)^{\frac{k_{j_t}^2}{2}}.
\]
It then follows from the choice of indices $i_1, \ldots, i_m$ and $j_1, \ldots, j_{m-1}$ and \Cref{deg comp} that
\[
\left\vert \deg(K) \right\vert = \left\vert \chi\left(\mathcal{H}(\gamma, \mathfrak{s})^I\right) \right\vert = \left\vert \sum_{t=1}^m (-1)^{\frac{k_{i_t}^2}{2}} - \sum_{t=1}^{m-1} (-1)^{\frac{k_{j_t}^2}{2}} \right\vert = \left\vert \sum_{v \in L(R)} (-1)^{\frac{\mathbf{gr}(v)}{2}} - \sum_{v \in A(R)} (-1)^{\frac{\mathbf{gr}(v)}{2}} \right\vert,
\]
as desired.
\end{proof}

\subsection{Examples of $| \chi ( SWF_R(K)) |$}
We give several concrete examples of the computation of $| \chi ( SWF_R(K)) |$ from \Cref{cor:Montesinos}. It is observed in \cite[Proof of Lemma 3.28]{KMT:2023} that $SWF_R(K)$ and $SWF_R(-K)$ are $V$-dual, where $-K$ denotes the mirror of $K$ and $V$ is some vector space. Therefore, we have 
\[
| \chi ( SWF_R(K)) |  =| \chi ( SWF_R(-K)) |. 
\]
Thus, we do not need to care about the convention of knots about the mirrors here. 

\begin{ex}
    Consider the plumbing graph\vspace{.3cm}
    \[\Gamma:=
\begin{tikzpicture}[xscale=1.5, yscale=1, baseline={(0,-0.1)}]
    \node at (-.3, 0) {$-1$};
    \node at (1.3, 0) {$-3$};
    \node at (1.3, 1) {$-2$};
    \node at (1.3, -1) {$-7$};
    \node at (0, 0) (A0) {$\bullet$};
    \node at (1, 0) (A1) {$\bullet$};

    \node at (1, 1) (B1) {$\bullet$};
    \node at (1, -1) (C1) {$\bullet$};
    
    \draw (A0) -- (B1);
    \draw (A0) -- (C1);
    \draw (A0) -- (A1);
    
\end{tikzpicture}
\]
 \vspace{.2cm}

\noindent Then $Y_\Gamma$ is the double-branched cover of the pretzel knot $K=P(2,-3,-7)$. We will present a path of $\mathrm{spin}^c$ structures on $W_\Gamma$, presented in terms of homology classes in $H_2(W_\Gamma;\mathbb{Z})$, which carries the lattice homology of $(Y_\Gamma,\mathfrak{s})$, where $\mathfrak{s}$ denotes the unique $\mathrm{spin}^c$ structure on $Y_\Gamma$.



    We will use the following notation: classes in $H_2(W_\Gamma;\mathbb{Z})$ are represented as quadruples $x=(a,b,c,d)$. This would mean that $x$ is the sum
    \[
    x=a[S_{-1}]+b[S_{-2}]+c[S_{-3}]+d[S_{-7}],
    \]
    where $S_{-n}$ denotes the node of $\Gamma$ whose self-intersection is $-n$. This setting is a bit different from the one that we used in the proof of \Cref{cor:Montesinos}, and thus the weight functions are defined differently. In fact, in this setting, the weight function is defined as
    \[
    w(x)=x^2 + k\cdot x,
    \]
    where $k=(0,1,1,1)$ is the spherical Wu class. 

    Now we consider the path 
    \[
    \gamma = \{ (-1,-1,-1,-1),(0,-1,-1,-1),(0,0,-1,-1),(0,0,0,-1),(0,0,0,0),(1,0,0,0) \}.
    \]
    The sequence of weights are then given by
    \[
    w(\gamma) = \{ 2,0,0,0,0,2 \}.
    \]
    it is then easy to see that $\gamma$ carries the lattice homology of $( Y_\Gamma,\mathfrak{s})$. In fact, a careful reader can observe that $\gamma$ is actually an almost $J$-invariant path in the sense of \cite[Definition 6.2]{DSS2023}. From this data, we see that the $S^1$-equivariant lattice Floer homotopy type $\mathcal{H}(\gamma)$ (which is the same as the $S^1$-equivariant Seiberg--Witten homotopy type of $\Sigma(2,3,7)$) is given by $S^2 \cup_{S^0} S^2$. Note that, since $\Sigma(2,3,7)$ is a homology sphere, it has only one $\mathrm{spin}^c$ structure, and thus we are dropping $\mathrm{spin}^c$ structures from our notations.

    To see the $O(2)$-action on this homotopy type, we observe that $S^2$ and $S^0$ are actually given in terms of compactifications of $S^1$-representations as follows:
    \[
    S^2 = \left(\mathbb{C}^1\right)^+\qquad\text{ and }\qquad S^0 = \left(\mathbb{C}^0\right)^+.
    \]
    The $I$-action on complex representations are given by the complex conjugation, so we see that 
    \[
    SWF_R(P(-2,3,7))=\mathcal{H}(\gamma)^I \simeq S^1 \cup_{S^0} S^1 \simeq S^1 \vee S^1 \vee S^1,
    \]
    and thus $\vert SWF_R(P(-2,3,7)) \vert = 3\cdot \vert \chi(S^1) \vert = 3$. Note here that we take $\chi(S^1)=1$, as we are considering $S^1$ as a graded spectrum $\Sigma^\infty S^1$ and thus we are computing the Euler characteristic of its reduced homology.
    
    For a sanity check, we will also use \Cref{thm:Montesinos} and check that we get the same result. From the sequence of weights of lattice points on the given path $\gamma$, we see that the associated graded root is given as follows.
    \vspace{.3cm}
    \[
    	\begin{tikzpicture}[scale=.8]
	\draw [fill=black] (0,0) circle (1.5pt);
	\draw [fill=black] (-.5,1) circle (1.5pt);
        \draw [fill=black] (.5,1) circle (1.5pt);
	\draw [fill=black] (0,-1) circle (1.5pt);
        \draw[fill=black] (0,-1.3) node{$\vdots$};
        \draw[thick] (-.5,1)--(0,0)--(.5,1);
        \draw[thick] (0,0)--(0,-1);
	\end{tikzpicture}
    \]
    This graded root has three vertices, among which two of them are leaves. The leaves lie in degree 2, while the non-leaf vertex, which has only one angle, lies in degree 0. Hence we see that \Cref{thm:Montesinos} also gives the same result:
    \[
    \left\vert \chi\left( SWF_R(P(-2,3,7)) \right) \right\vert = \vert (-1) + (-1) - 1 \vert = 3.
    \]
\end{ex}

\begin{ex}
    Instead of the pretzel knot $P(-2,3,7)$, we now consider the Montesinos knots $K_n$ given by negative-definite AR plumbing graphs of $\Sigma(2,3,n)$, where $n\ge 7$ and $n$ is relatively prime to $6$. In this case, one can use the computation of the $S^1$-equivariant Seiberg-Witten Floer homology of their double-branched covers $\Sigma_2(K_n) = \Sigma(2,3,n)$, which was already done in \cite[Section 7.2]{Ma07} to determine the graded root, and then use it to compute the value of $\left\vert \chi\left(SWF_R(K_n)\right)\right\vert$. 

    For simplicity, we will only present two cases: $n=12k-5$ and $n=12k+1$ for $k>0$. In the case $n=12k-5$, which also covers the case of $P(2,-3,-7)$, the graded root is given as follows.
    \vspace{.3cm}
    \[
    	\begin{tikzpicture}[scale=.8]
	\draw [fill=black] (0,0) circle (1.5pt);
	\draw [fill=black] (-.5,1) circle (1.5pt);
        \draw [fill=black] (.5,1) circle (1.5pt);
	\draw [fill=black] (-1.5,1) node{$\cdots$};
	\draw [fill=black] (1.5,1) node{$\cdots$};
	\draw [fill=black] (-2.5,1) circle (1.5pt);
	\draw [fill=black] (2.5,1) circle (1.5pt);
        
	\draw [fill=black] (0,-1) circle (1.5pt);
        \draw[fill=black] (0,-1.3) node{$\vdots$};
        \draw[thick] (-.5,1)--(0,0)--(.5,1);
        \draw[thick] (0,0)--(0,-1);
        \draw[thick] (0,0)--(-2.5,1);
        \draw[thick] (0,0)--(2.5,1);
	\end{tikzpicture}
    \]
    It has $2k$ leaves in some even degree, which we consider to be at degree 2 after a suitable degree shift, and $2k-1$ angles in degree 0. Hence we have
    \[
    \left\vert \chi\left(SWF_R(K_n)\right) \right\vert = 4k-1.
    \]
    On the other hand, if $n=12k+1$, then the graded root looks like the following.

    \[
    	\begin{tikzpicture}[scale=.8]
	\draw [fill=black] (0,0) circle (1.5pt);
	\draw [fill=black] (0,1) circle (1.5pt);
        \draw [fill=black] (-1,1) circle (1.5pt);
        \draw [fill=black] (1,1) circle (1.5pt);
	\draw [fill=black] (-2,1) node{$\cdots$};
	\draw [fill=black] (2,1) node{$\cdots$};
	\draw [fill=black] (-3,1) circle (1.5pt);
	\draw [fill=black] (3,1) circle (1.5pt);
        
	\draw [fill=black] (0,-1) circle (1.5pt);
        \draw[fill=black] (0,-1.3) node{$\vdots$};
        \draw[thick] (0,1)--(0,0)--(0,-1);
        \draw[thick] (-1,1)--(0,0)--(1,1);
        \draw[thick] (-3,1)--(0,0)--(3,1);
	\end{tikzpicture}
    \]
    It has $2k+1$ leaves in degree 2 (after a degree shift) and $2k$ angles in degree 0. Hence we get
    \[
    \left\vert \chi\left(SWF_R(K_n)\right) \right\vert = 4k+1.
    \]
   The remaining cases can be dealt with similarly, and so we omit them.

\end{ex}

\begin{ex}
    Let $\Gamma$ be a negative-definite AR plumbing graph, $W_\Gamma$ be the associated smooth 4-manifold, and $K$ be the associated arborescent knot. Since we know from \Cref{thm:Montesinos} that the computation of the Euler characteristic $\left\vert \chi(SWF\left(\Sigma_2(K),\mathfrak{s})^I\right)\right\vert$ depends only on the graded root of $(\Gamma,\mathfrak{s})$ for any $\mathrm{spin}^c$ structure $\mathfrak{s}$ on $Y_\Gamma=\Sigma_2(K)$, their computations are now easy even in much more complicated cases.

    We will give model computations for four additional cases, $Y_1,Y_2,Z_1,Z_2$, defined as follows.
    \[
    Y_1 = \Sigma(3,5,7),\quad Y_2 = \Sigma(5,8,13),\quad Z_1 = \Sigma(3,4,11),\quad Z_2=\Sigma(5,7,17).
    \]
    They are Seifert manifolds and thus admit canonical plumbing graphs which we denote as $\Gamma_{Y_1},\Gamma_{Y_2},\Gamma_{Z_1},\Gamma_{Z_2}$. We will denote their associated Montesinos knots as $K_{Y_1},K_{Y_2},K_{Z_1},K_{Z_2}$. Also, $Y_1,Y_2,Z_1,Z_2$ are all homology spheres, so they only have one $\mathrm{Spin}^c$-structures; hence we will drop them from our notations. The computation of their graded roots are given in \cite[Figures 8 and 9]{karakurt2022almost}. Applying \Cref{thm:Montesinos} then tells us the following.
    \[
        \left\vert \chi\left(SWF_R(K_{Y_1})\right) \right\vert =  
        \left\vert \chi\left(SWF_R(K_{Y_2})\right) \right\vert =  
        \left\vert \chi\left(SWF_R(K_{Z_1})\right) \right\vert = 
        \left\vert \chi\left(SWF_R(K_{Z_2})\right) \right\vert = 1.
    \]
\end{ex}

\section{Concluding remarks}
\subsection{Calculations on Miyazawa's invariant} \label{subsec: miyazawa}

Miyazawa considered the mapping degree of the $\{\pm 1\}$-framed real Bauer--Furuta invariants  
\[
| \deg(S) |\in \Z/\{\pm 1\} =\Z_{\geq 0 } \qquad \text{ and } \qquad 
 |\deg(P) |\in \Z/\{\pm 1\} =\Z_{\geq 0 }
\]
for given a $2$-knot $S$ in $S^4$ and a given $\mathbb{RP}^2$-knot $P$ in $S^4$. We recall the following theorem, proven in \cite{Mi23}: 
\begin{thm}\label{Miya}
    Let $K$ be a knot in $S^3$ with determinant one and $k, l$ be integers. We denote by $\tau_{k,\alpha}(K)$ the $k$-twisted $\alpha$-roll twisted spun knot in $S^4$. 
    If $\frac{k}{2}+ \alpha$ is odd, then we have 
    \begin{align}\label{surgery formula}
    |\deg (\tau_{k,\alpha}(K))| = |\deg(K)|, 
      \end{align}
    where $\deg(K)$ denotes the absolute value of the sign counting of the $(\pm 1)$-framed real Seiberg--Witten moduli space for $\Sigma_2(K)$ with the unique spin structure. 
\end{thm}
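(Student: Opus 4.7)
The plan is to reproduce Miyazawa's argument from \cite{Mi23} via a careful surgical analysis of the double-branched cover $\Sigma_2(\tau_{k,\alpha}(K))$ and its covering involution. The first step is to give an explicit handle decomposition of $\Sigma_2(\tau_{k,\alpha}(K))$ as a closed, oriented, simply-connected $4$-manifold, starting from the standard description of the $k$-twisted $\alpha$-roll spun construction: remove a neighborhood of the spinning axis from $S^4$ to obtain $S^2\times D^2$, and reglue a cylinder $(S^3\smallsetminus \nu(K))\times I$ via a diffeomorphism built from $k$ Dehn twists along a meridian of $K$ and $\alpha$ rolls. Taking the double cover branched along $\tau_{k,\alpha}(K)$ lifts this picture to a decomposition of $\Sigma_2(\tau_{k,\alpha}(K))$ into a mapping torus piece over $\Sigma_2(K)$ and a $D^2$-bundle piece over the quotient of the axis.

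Next, I would identify the induced real/spin structure on $\Sigma_2(\tau_{k,\alpha}(K))$ coming from the unique spin structure on $\Sigma_2(K)$ (which exists and is unique by the determinant-one hypothesis). The parity condition $\tfrac{k}{2}+\alpha\in 2\Z+1$ is precisely what makes the framing on the lifted axis odd, so that the spin structure extends across the glued $D^2$-bundle and the covering involution lifts to an odd involution on the spinor bundle in the sense of Section \ref{o(2)BF}. This is the content of \cite[Proposition~4.25]{Mi23}, and without this parity the real structure simply does not exist.

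Then I would apply a real-equivariant gluing formula for Bauer--Furuta invariants, in the same spirit as the $O(2)$-gluing used in the proof of \Cref{adjunction I ver}, to the decomposition above. The mapping torus piece carries a natural $S^1$-symmetry arising from the spinning direction and admits an invariant positive scalar curvature metric on a neighborhood of the axis, so its Bauer--Furuta contribution is homotopic to the identity up to sign and coordinate change. What remains is the contribution from the $\pm 1$-framed surgery piece on $\Sigma_2(K)$, whose finite-dimensional approximation is by definition what computes $|\deg(K)|$. Combining these yields $|\deg(\tau_{k,\alpha}(K))| = |\deg(K)|$.

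The main obstacle is bookkeeping of the real structure through the twist-and-roll monodromy: one must verify that the involution on the mapping torus piece intertwines correctly with the Dehn twist and roll operations, and that the various sign ambiguities introduced by the choice of lift of $\tau$, the splitting of $1\to S^1\to G_{\mathfrak s}\to \Z_2\to 1$, and the coordinate change in \Cref{coordinate change} are all absorbed into the absolute value. Miyazawa's \cite[Lemma~4.27 and Proposition~4.30]{Mi23} carry out exactly this bookkeeping, and the task here is to invoke them directly.
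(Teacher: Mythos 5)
The paper does not prove this theorem; it simply quotes it from Miyazawa's paper \cite{Mi23}, directing the reader (in the introduction) to Proposition 4.25, Lemma 4.27, and Proposition 4.30 there. Your proposal ends by acknowledging exactly this---that the task is to invoke Miyazawa's results directly---so you take the same approach as the paper; your sketch of Miyazawa's underlying decomposition-and-gluing argument is a plausible reconstruction, but there is no internal argument in the paper to compare it against.
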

For the definition of twisted roll spun 2-knots, see \cite[Section 1]{Pl84}. We shall rewrite the left-hand side of \eqref{surgery formula} in terms of real Seiberg--Witten Floer homotopy type of knots $SWF_R(K)$. 

\begin{prop}\label{deg comp}
For a knot $K$ in $S^3$, we have $|\deg (K) | = \left|\chi \left(SWF_R(K)\right)\right|$. 
\end{prop}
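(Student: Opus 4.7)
The plan is to identify Miyazawa's $\deg(K)$ with the signed count of critical points of the Chern--Simons--Dirac functional on the $I$-fixed locus of the finite-dimensional approximation used to define $SWF_R(K)$, and then to invoke the standard identity that equates such a signed count with the reduced Euler characteristic of the corresponding Conley index.

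First, I would recall from Miyazawa's construction in \cite[Sections 4.3--4.5]{Mi23}, and in particular Proposition 4.30 there, the precise meaning of $\deg(K)$: it is the signed count of $\{\pm 1\}$-framed real Seiberg--Witten solutions on $\Sigma_2(K)$ with respect to its unique spin structure $\fraks_0$. After the finite-dimensional approximation by the eigenspaces $V^{\lambda}_{-\lambda}(K) \oplus W^{\lambda}_{-\lambda}(K)$, this becomes the signed count of zeros of the restriction of the CSD gradient to the $I$-fixed locus $\left(V^{\lambda}_{-\lambda}(K) \oplus W^{\lambda}_{-\lambda}(K)\right)^I$; the $\{\pm 1\}$-framing provides precisely the compactification datum making this count well-defined independently of the choice of $\lambda$.

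Second, by the construction of $SWF_R(K)$ reviewed earlier, the pointed space $N^I/L^I$ is, up to the formal desuspensions by $(V^0_{-\lambda})^I$ and $\frac{1}{2} n(\Sigma_2(K), \fraks_0, g)\,\C$, the Conley index of the restricted flow $l + p^{\lambda}_{-\lambda} c$ on the $I$-fixed locus, around its maximal bounded isolated invariant set. By the classical relation between Conley index and critical-point counts for gradient-like flows---namely, that a Morse-type Conley index is homotopy equivalent to a wedge of spheres indexed by critical points with dimensions equal to their Morse indices---one obtains
\[
\chi\left(N^I/L^I\right) \;=\; \sum_{x} (-1)^{\mathrm{ind}(x)},
\]
where the sum ranges over critical points of the CSD approximation inside the $I$-fixed locus, and $\mathrm{ind}(x)$ is the Morse index. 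The signs $(-1)^{\mathrm{ind}(x)}$ match Miyazawa's $\{\pm 1\}$-framed signs $\epsilon(x)$ by the standard determinant-line/spectral-flow comparison. Since formal suspension by a real or complex representation changes $\chi$ only by an overall sign, passing from $N^I/L^I$ to $SWF_R(K)$ preserves absolute values, and combining the two identifications gives
\[
|\deg(K)| \;=\; \left| \sum_{x} \epsilon(x) \right| \;=\; \left|\chi(SWF_R(K))\right|.
\]

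The main obstacle is verifying that the two compactifications of the CSD gradient at infinity---Miyazawa's $\{\pm 1\}$-framing on the one hand, and the Conley isolating neighborhood underlying Manolescu's construction of $SWF$ on the other---produce matching signed counts up to an overall sign, which is absorbed into the absolute value. Since both compactifications ultimately arise from the same $\lambda$-truncation of the CSD gradient on the same Hilbert space, the identification is a routine but careful sign-tracking exercise; the only genuine content is the Conley-index-to-Euler-characteristic identity above, which is a well-established piece of finite-dimensional Morse theory.
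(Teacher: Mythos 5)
Your proposal follows essentially the same route as the paper: identify $\deg(K)$ with a signed count of critical points of (a nondegenerate perturbation of) $CSD^I$ restricted to the finite-dimensional approximation, invoke the Conley-index Euler-characteristic identity, and match signs via the determinant-line/Morse-index comparison. The paper supplies the technical ingredients you gesture at---the cylinder-function perturbation making all critical points nondegenerate, the comparison between infinite- and finite-dimensional critical point sets and gradings following Lidman--Manolescu---and the one imprecision in your sketch (the Conley index of a Morse-type flow is a CW complex built from cells indexed by critical points, not in general a wedge of spheres) does not affect the Euler-characteristic conclusion.
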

\begin{proof}
The proof needs a comparison between the critical point set of infinite-dimensional Morse functional and that of a finite-dimensional approximation of the functional. Basically, the analysis we need to do is similar to the arguments done in \cite[Section 7 and 9]{LM18}, although we only need to focus on critical point sets, not trajectories. Also, we do not need to consider the blow up of the configuration space since we forcus on the counting of framed moduli spaces.  Such a comparison needs a careful analysis and the discussions rely on the compactness of the Seiberg--Witten equation. In our situation, we are just taking a fixed point part with respect to $I \in O(2)$, so such a compactness is still true. Thus, we will not repeat their argument here, instead, we write a sketch of the proof.

We first see the precise definition of the degree invariant $\deg (K)$. With respect to the unique spin structure on the double-branched cover $\Sigma_2(K)$ with a $\Z_2$-invariant Riemannian metric on $\Sigma_2(K)$, we have the $O(2)$-invariant Chern--Simons Dirac functional on a global slice; 
 \[
CSD \colon \mathcal{C}_K := \left(i\ker d^ *\subset  i\Om^1_{\Sigma_2(K)} \right)  \oplus \Gamma (\mathbb{S}) \to \R. 
\]
Then, we consider the induced function on the fixed point set: 
\[
CSD^I \colon \mathcal{C}_K^I  := (i\ker d^ *)^I   \oplus \Gamma (\mathbb{S})^I  \to \R.
\]
We have an action of constant gauge transformations $\{ \pm 1\}$.  
Now, we take a perturbation that comes from cylinder functions
\[
f : \mathcal{C}_K^I \to \R 
\]
such that all critical points of $CSD^I + f$ are non-degenerate, i.e. the Hessians on the critical point sets are invertible. 
The existence of such a perturbation is proven in \cite[7.4. Proof of transversality]{Li23}. 
After the perturbation, we can assume there are the unique reducible critical point $[(a_0, 0)]$ has stabilizer $\pm 1$, and the set of the other finite irreducible critical points have a free $\Z_2$ action comes from $[(a, \phi )] \to [(a, -\phi )]$.
 We also fix an orientation of a fiber of the determinant line bundle $\operatorname{det} ( \operatorname{Ker} d (CSD^I + f)_{[(a_0, 0)]} )$ corresponding to the reducible $[(a_0, 0)]$. 
Induced from this orientation, we can define the absolute value of the signed counting of all critical points of $CSD^I + f$, which is denoted by $\deg (K)$. Since $\deg (K)$ is a counting of the $\{ \pm 1 \}$-framed moduli space with respect to a fixed $\Z_2$-Riemannian metric and a perturbation, $\deg (K)$ is independent of the choices of a $\Z_2$-invariant metric and a non-degenerate perturbation. Also, since $\deg(K)$ denotes the absolute value, $\deg (K)$ does not depend on the choices of an orientation of the determinant line bundle.

Now, we relate $\deg (K)$ with $\left|\chi \left(SWF(\Sigma_2(K))^I\right)\right|$. The spectrum $SWF(\Sigma_2(K))^I $ was defined by taking the $\langle I\rangle$-fixed point part of the Seiberg--Witten Floer homotopy type $SWF(\Sigma_2(K), \fraks_0)$, again $\fraks_0$ denotes the unique spin structure on $\Sigma_2(K)$. Alternatively, we can describe $SWF(\Sigma_2(K))^I$ as the Conley index of a finite-dimensional approximation of the flow with respect to the vector field $ \operatorname{grad} CSD^I$. Let us say this construction briefly.
Define $V^{\lambda}_{-\lambda}(K)\oplus W^{\lambda}_{-\lambda}(K) \subset \mathfrak{C}_K^I $ to be the direct sums of the eigenspaces of the linear part of $\operatorname{grad} (CSD^I + f) $ whose eigenvalues are in $(-\lambda, \lambda]$, where $V^{\lambda}_{-\lambda}(K)$ is the eigenspace corresponding to the space of 1-forms and $W^{\lambda}_{-\lambda}(K)$ is the eigenspace corresponding to spinors. Then we restrict the perturbed Chern--Simons Dirac functional $CSD^I + f$ to $V^{\lambda}_{-\lambda}(K) \oplus W^{\lambda}_{-\lambda}(K)$. If we take $\lambda$ sufficiently large, the set of critical points of $CSD^I + f$ in $\mathfrak{C}_K^I$ is contained in $V^{\lambda}_{-\lambda}(K) \oplus W^{\lambda}_{-\lambda}(K)$ and the Hessians of the restricted function 
\[
(CSD^I + f)|_{V^{\lambda}_{-\lambda}(K) \oplus W^{\lambda}_{-\lambda}(K)} \colon V^{\lambda}_{-\lambda}(K) \oplus W^{\lambda}_{-\lambda}(K) \to  \R
\]
on each critical point is invertible. 
Note that a comparison between the critical point sets of the infinite-dimensional setting and a finite-dimensional Morse setting is given in \cite[Corollary 7.1.5, 
Corollary 7.2]{LM18} in $S^1$-monopole Floer setting. \footnote{Since they treat blown-up of a finite-dimensional approximation and comparison between Morse chain complexes. In our situation, we are just counting $\{\pm1\}$-framed critical points, we do not need to consider the blow-up configuration space.   }
A similar analysis enables us to see there is no other critical point of $(CSD^I + f)|_{V^{\lambda}_{-\lambda}(K) \oplus W^{\lambda}_{-\lambda}(K)}$ if we take $\lambda$ sufficiently large. 

Now, we consider the gradient flow with respect to $\rho \operatorname{grad}(CSD^I + f)$ on $V^{\lambda}_{-\lambda}(K) \oplus W^{\lambda}_{-\lambda}(K)$, where $\rho$ is a cut-off function appeared as in the case of the construction of the usual Seiberg--Witten Floer homotopy type. Then, one can prove this flow has an isolated invariant neighborhood, which is a big ball in $V^{\lambda}_{-\lambda}(K) \oplus W^{\lambda}_{-\lambda}(K)$, again it is assumed to contain all critical points of $CSD^I + f$.
Then, the Conley index of the vector field $\rho \operatorname{grad}(CSD^I + f)$ is described by a CW complex which has a handle decomposition coming from the Morse handle decomposition with respect to $CSD^I + f$. 
Therefore, it is not hard to see the Euler number of the Conley index is equal to the signed counting of the critical point set of $CSD^I + f$ restricted to $V^{\lambda}_{-\lambda}(K) \oplus W^{\lambda}_{-\lambda}(K)$. (See \cite[Theorem 2.4.3]{LM18}.)
Thus, it is sufficient to see the sign coming from an orientation of the determinant line bundle and the sign comes from the Morse index with respect to $CSD^I + f$ are the same. This sign is equivalent to whether relative grading is odd or even with respect to the relative $\Z$-grading.
Therefore, it is a comparison between the Morse index in the infinite-dimensional setting and the Morse index in a finite-dimensional approximation. 
In \cite[Corollary 9.1.3]{LM18}, such comparisons between the two degrees are given in the usual $ S^1$-monopole Floer setting. 
A similar argument without essential change enables us to see the relative gradings in the infinite-dimensional setting and a finite-dimensional setting are the same. This completes the sketch of a proof.  \end{proof}

Now, from \cref{deg comp}, our result gives combinatorial computations of $|\deg (\tau_{k,\alpha}(K)) |$, described in \cref{Miya_torus} and  \cref{cor:Montesinos}.  


On the other hand, for the standard $P_0=\mathbb{RP}^2$ whose double cover is $\overline{\mathbb{CP}}^2$, we have 
\[
|\deg (P_0)|=1
\]
and the connected sum formula
\[
|\deg (P \# S )| = |\deg (P)| \cdot | \deg (S)| \qquad \text{ and }\qquad |\deg (S \# S' )| = |\deg (S)| \cdot |\deg (S')|
\]
for general $\mathbb{RP}^2$ knot whose double-branched cover has $b_2^+=0$ and $2$-knots $S$ and $S'$, which are again proven in \cite{Mi23}. 

\begin{cor}\label{monte general deg}
 Let $\Gamma$ be a negative-definite AR-graph, $W_\Gamma$ be the associated plumbed 4-manifold with boundary $Y_\Gamma$, and consider the corresponding arborescent knot $K$. Let $\gamma$ be a path which carries the lattice homology of $(\Gamma,\mathfrak{s})$ for any $\mathrm{spin}^c$ structure $\mathfrak{s}$ on $Y_\Gamma$. Suppose that the lattice homology of $(\Gamma,\mathfrak{s})$ is expressed as a graded root $R$, and the determinant of $K$ is one. 
 Denote the sets of leaves and non-leaf vertices of $R$ by $L(R)$ and $NL(R)$, respectively, and shift the grading (if necessary) so that all vertices of $R$ lie on even degrees. Furthermore, we suppose 
    \[
 \left\vert \sum_{v\in L(R)} (-1)^{\frac{\mathbf{gr}(v)}{2}} - \sum_{v\in NL(R)} (-1)^{\frac{\mathbf{gr}(v)}{2}} \right\vert \neq 1. 
    \]
    Then for integers $k, \alpha$ such that $\frac{k}{2}+ \alpha$ is odd, the $k$-twisted $\alpha$-roll twisted spun knot $\tau_{k , \alpha }(K) \# P_0  $ and $P_0$ are not smoothly isotopic.
\end{cor}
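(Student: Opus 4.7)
The plan is to combine the multiplicativity of Miyazawa's degree invariant under connected sum with the combinatorial formula for $|\deg(\tau_{k,\alpha}(K_\Gamma))|$ provided by \cref{cor:Montesinos}. The ultimate goal is to show $|\deg(\tau_{k,\alpha}(K_\Gamma)\# P_0)| \neq |\deg(P_0)|$, which, by invariance of $|\deg|$ under smooth isotopy of $\mathbb{RP}^2$-knots, yields the desired non-isotopy statement.

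First, I would reconcile the notation between the statement and \cref{cor:Montesinos}. In the (reduced) graded root of an AR plumbing, each displayed vertex is either a leaf or a branching point, and the branching points are precisely the angles in the sense of \cref{cor:Montesinos}. Thus $A(R)=NL(R)$, and the hypothesis of the statement is equivalent to
\[
\left| \sum_{v\in L(R)} (-1)^{\mathbf{gr}(v)/2} - \sum_{v\in A(R)} (-1)^{\mathbf{gr}(v)/2} \right| \neq 1.
\]
Since $\det(K_\Gamma)=1$, \cref{cor:Montesinos} then immediately gives $|\deg(\tau_{k,\alpha}(K_\Gamma))| \neq 1$ for every pair of integers $k,\alpha$ with $k/2+\alpha$ odd.

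Second, I would apply the connected-sum formula $|\deg(P\# S)| = |\deg(P)|\cdot|\deg(S)|$ recalled just above the corollary, with $P=P_0$ and $S=\tau_{k,\alpha}(K_\Gamma)$, together with $|\deg(P_0)|=1$. This yields
\[
|\deg(\tau_{k,\alpha}(K_\Gamma)\# P_0)| = |\deg(\tau_{k,\alpha}(K_\Gamma))| \neq 1 = |\deg(P_0)|,
\]
and smooth isotopy invariance of $|\deg|$ for $\mathbb{RP}^2$-knots completes the argument.

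The main (and essentially only) subtlety I anticipate is justifying the identification $A(R)=NL(R)$ in the conventions of the graded root. Should the two notions ever diverge for a more exotic $\Gamma$, one can replace this step by a direct Mayer--Vietoris computation of $\chi(\mathcal{H}(\gamma,\mathfrak{s})^I)$---parallel to the one carried out in the proof of \cref{cor:Montesinos}---verifying that contributions from interior non-angle vertices along pure edges cancel in $\pm 1$ pairs, so that only leaves and angles survive in the Euler characteristic. Everything else in the argument is a direct assembly of results already in hand.
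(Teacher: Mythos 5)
Your proof is correct and matches the route the paper intends: the paper itself states \cref{monte general deg} without an explicit proof, as an immediate consequence of \cref{cor:Montesinos}, the connected-sum formula, $|\deg(P_0)|=1$, and isotopy invariance of $|\deg|$; that is exactly the chain of deductions you carry out. You are also right to flag the $A(R)$ versus $NL(R)$ notational wrinkle as the only point deserving justification: in the description of the graded root $R$ used in the proof of \cref{cor:Montesinos}, the constructed root has precisely the leaves $v_1,\dots,v_m$ and the angles $w_1,\dots,w_{m-1}$ as its vertices, so in that convention $A(R)=NL(R)$ and the hypothesis of \cref{monte general deg} is literally the condition $|\deg(\tau_{k,\alpha}(K_\Gamma))|\neq 1$; your Mayer--Vietoris fallback would resolve any ambiguity if a finer model of $R$ with interior degree-two vertices were being used, since such vertices pair off with their adjacent edges and cancel in the Euler characteristic.
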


\begin{rem}
As observed in \cite[Theorem 4.47]{Mi23}, $\tau_{k , \alpha }(K) \# P_0$ and $P_0$ have non-diffeomorphic complements for $k = 0$, $\alpha = 1$, and $K = P(-2, 3, 7)$. Note that the same proof works in more general situations once we can ensure 
\[
\deg (\tau_{k , \alpha }(K)) > 1.
\]
Under the same assumptions in \cref{monte general deg}, we see that the complements of $\tau_{k , \alpha }(K) \# P_0$ and $P_0$ in $S^4$ are not diffeomorphic.
\end{rem}




\subsection{Structual theorem of an $O(2)$-equivariant Bauer--Furuta invariant}

\begin{proof}[Proof of \cref{structual o2}]

    For a given $2$-knot or $\mathbb{RP}^2$-knot $S$ in $S^4$, we consider its double-branched covering space $\Sigma_2(S)$. We assume $b_2^+(\Sigma_2(S) ) =0$ for the $\mathbb{RP}^2$-knot case. Then, we take the unique spin structure on $\Sigma_2(S)$ when $S$ is 2-knot and the $\mathrm{spin}^c$ structure $\fraks$ such that $c_1(\fraks)^2 =-1$ when $S$ is $\mathbb{RP}^2$-knot. Associated to it, we have an $O(2)$-equivariant map 
    \[
    BF_{\Sigma_2(S), \fraks} \colon W^+  \to V^+. 
    \]
    with respect to the above spin or $\mathrm{spin}^c$ structure $\fraks$. 
One can easily check that $W$ is isomorphic to $V$ as $O(2)$-representation spaces and the $O(2)$-equivariant stable homotopy class of $ BF_{W, \fraks}$ is an invariant of smooth isotopy classes of $2$-knots or such $\mathbb{RP}^2$ knots. If we take $\langle I \rangle \subset O(2)$-invariant part of $ BF_{\Sigma_2(S), \fraks}$, we recover the Miyazawa's invariant $\deg (S)$ as the mapping degree of $ BF_{\Sigma_2(S), \fraks}^I$. 
Such a homotopy class is determined by two quantities 
\[
\deg \left( BF_{\Sigma_2(S), \fraks}^I\right) \qquad \text{ and }\qquad  \deg \left( BF_{\Sigma_2(S), \fraks}^{S^1}\right)
\]
by \cref{TD}. The latter one is $+1$ if we take a standard homology orientation. The first one is nothing but Miyazawa's invariant. The sign ambiguity corresponds to composing the permutation 
  \[
   (z_1,z_2,z_3 \dots, z_n) \mapsto (z_2, z_1,z_3, \dots, z_n) \colon \C^n \to \C^n.   
    \]
    This completes the proof. 
\end{proof}

\appendix

\section{$O(2)$-representations and $O(2)$-equivariant maps}

\subsection{$O(2)$ representations in our setting}

We first see which representations of $O(2)$ appear in our situation. 

\begin{lem}\label{O2rep}
Consider the Lie group $O(2)$, and identify its identity component with $U(1)$. Choose an order two element $I \in O(2)$ such that $O(2)$ is generated by $U(1)$ and $I$. Let $\rho \colon O(2) \to GL_\mathbb{R}(V)$ be a representation of $O(2)$, where $V = \mathbb{C}^n$ and $U(1)$ acts on $V$ via $\rho$ by complex multiplication. Then the action of $\rho(I)$ is complex conjugation up to base change.
\end{lem}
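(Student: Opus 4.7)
The plan is to exploit the well-known structure of $O(2)$ as the semidirect product $U(1) \rtimes \Z_2$, where the nontrivial element of $\Z_2$ acts on $U(1)$ by inversion. Concretely, any order-two element $I \in O(2) \smallsetminus U(1)$ (a ``reflection'') satisfies the relation $I u I^{-1} = u^{-1}$ for every $u \in U(1)$. Applying the representation $\rho$ to this identity gives
\[
\rho(I) \circ \rho(u) \circ \rho(I)^{-1} = \rho(u^{-1}),
\]
and under the hypothesis that $\rho(u)$ acts on $V = \mathbb{C}^n$ as multiplication by $u$, this says $\rho(I)$ intertwines multiplication by $u$ with multiplication by $\bar{u} = u^{-1}$.

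The next step is to upgrade this intertwining relation into the statement that $\rho(I)$ is $\mathbb{C}$-antilinear. Since $\rho(I)$ is $\mathbb{R}$-linear by hypothesis, and the relation above gives $\rho(I)(\lambda v) = \bar\lambda\, \rho(I)(v)$ for every $\lambda \in U(1) \subset \mathbb{C}^\times$, bilinearity and scaling by positive reals (which commute with $\rho(I)$ since positive reals are not in $U(1)$ but are $\mathbb{R}$-scalars) immediately extend this to all $\lambda \in \mathbb{C}$. Hence $\rho(I)$ is a conjugate-linear endomorphism of $\mathbb{C}^n$. Combined with the condition $I^2 = e$, which forces $\rho(I)^2 = \mathrm{id}_V$, we conclude that $\rho(I)$ is an antilinear involution, i.e., a real structure on $\mathbb{C}^n$.

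Finally, to identify this real structure with the standard complex conjugation, I will invoke the classical fact that the $(+1)$-eigenspace $V^{\rho(I)} \subset V$ of any antilinear involution is a totally real subspace of real dimension $n$, and any $\mathbb{R}$-basis of $V^{\rho(I)}$ is automatically a $\mathbb{C}$-basis of $V$. In such a basis, $\rho(I)$ is tautologically the standard complex conjugation $z \mapsto \bar z$. The main (minor) subtlety is that the conclusion only determines $\rho(I)$ up to a $\mathbb{C}$-linear change of basis; the statement of the lemma should be read in this sense, consistent with its use in the rest of the paper, where the complex structure on $V$ is fixed only up to the $U(1)$-action and no canonical $\mathbb{R}$-form has been specified. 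There is no serious obstacle to any of these steps.
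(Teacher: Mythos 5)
Your proof is correct, and it takes a genuinely different route from the paper's. The paper proceeds by classification: it writes down the complete list of irreducible real $O(2)$-representations ($\mathbb{R}$, $\widetilde{\mathbb{R}}$, and the two-dimensional $\mathbb{C}_q$ for $q\ge 1$), observes that the hypothesis forces every irreducible summand of $V$ to be $\mathbb{C}_1$ (the only one on which $U(1)$ acts freely, equivalently by weight one), and hence $V\cong\mathbb{C}_1^{\,n}$ as an $O(2)$-representation, where $I$ acts by conjugation by definition. Your argument avoids the classification entirely: you apply $\rho$ to the relation $IuI^{-1}=u^{-1}$ to see that $\rho(I)$ intertwines multiplication by $u$ with multiplication by $\bar u$, upgrade this to full $\mathbb{C}$-antilinearity by combining with $\mathbb{R}$-linearity and positive scaling, note that $I^2=e$ makes $\rho(I)$ an antilinear involution (a real structure on $\mathbb{C}^n$), and finish with the classical fact that any real structure is conjugate to standard conjugation by a $\mathbb{C}$-linear change of basis. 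The classification route buys the paper the explicit list of admissible representations (which reappears in the definition of the universe $\mathcal{U}$), while your argument is more elementary and makes transparent the two algebraic inputs actually responsible for the conclusion: the conjugation relation in $O(2)$ and the order of $I$. Your closing remark that the conclusion holds only up to a $\mathbb{C}$-linear change of basis is accurate, and it is consistent with the paper's own phrasing, which likewise identifies $V$ with $\mathbb{C}^n$ only as an abstract isomorphism of $O(2)$-representations.
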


\begin{proof}
    Since $O(2)$ is compact, its finite-dimensional representations over $\mathbb{R}$ decompose into a direct sum of irreducible representations up to base change. The list of all irreducible representations of $O(2)$ is described below (the proof is straightforward and thus omitted).
    \begin{itemize}
        \item 1-dimensional trivial representation $\mathbb{R}$;
        \item 1-dimensional flip representation $\tilde{\mathbb{R}}$, where $O(2)$ acts through $\pi_0(O(2))$, which then acts on $\mathbb{R}$ by $\pm 1$;
        \item 2-dimensional reprsentations $\mathbb{C}_q$, indexed by positive integers $q$, where $I$ acts on $\mathbb{C}\cong \mathbb{R}^2$ by complex conjugation and $U(1)$ acts by the $q$-fold rotation. (When $q=1$, we denote $\mathbb{C}_1$ by $\mathbb{C}$, as $I$ acts on it by complex conjugation)
    \end{itemize}
    Hence $V$ decomposes into direct sums of several copies of $\mathbb{R}$, $\tilde{\mathbb{R}}$, and $\mathbb{C}_q$ for $q>0$. Observe that, among the irreducible representations of $O(2)$, the only one which induces a free action (outside the origin) of $U(1)$ is $\mathbb{C}$. Since $U(1)$ acts freely on $V\smallsetminus \{0\}$ via $\rho$ by assumption, we deduce that $V = \mathbb{C}^n$ as $O(2)$-representations, and thus the action of $\rho(I)$ is the complex conjugation.
\end{proof}

\begin{lem}\label{auto}
Let $\rho \colon O(2) \to GL_\mathbb{R}(V)$ be a representation of $O(2)$, where $V = \mathbb{C}^n$ and $U(1)$ acts on $V$ via $\rho$ by complex multiplication. Then, the set of automorphisms
\[
\{ f \in GL_\mathbb{R}(V) \mid f \rho = \rho f \}
\]
is identified with $GL(n, \mathbb{R})$.
\end{lem}

\begin{proof}
One can assume $\rho$ is the standard $O(2)$-action by a base change. First of all, $\mathbb{R}$-linearity and $U(1)$-commutativity give $\mathbb{C}$-linearity, and thus the given space is a subspace of $GL_n(\mathbb{C})$. An element $A$ of $GL_n(\mathbb{C})$ is $O(2)$-commutative if and only if it commutes with complex conjugation, i.e., $\overline{A z} = A \overline{z}$ for all complex vectors $z$. This is equivalent to saying that $\overline{A} = A$. Conversely, it is obvious that matrices in $GL_n(\mathbb{R})$ give $O(2)$-commutative automorphisms of $\mathbb{C}^n$.
\end{proof}

\subsection{Equivariant version of Hopf's classification theorem}
We review the equivariant version of Hopf's classification theorem written in \cite[Page 125]{DT87}, which was used to prove the existence of $O(2)$-equivariant map between the lattice homotopy type and the Seiberg--Witten Floer homotopy type. 

Let $V$ and $W$ be $O(2)$-representations. 
We denote by $ V^+$ and $W^+$ the one-point compactifications of $V$ and $W$. 
Suppose the possible isotropy groups of $V^+$ and $W^+$ are 
\[
\{ e\},\, S^1,\, \langle I \rangle,\, O(2) \subset O(2). 
\]
For each isotopy group $G \subset O(2)$, we have the fixed point spheres $(V^+)^G$ and $(W^+)^G$, whose dimensions are written by 
$n_V (G)$ and $n_W (G)$. 
Let us define the set $\Phi (V, W, O(2))$ of conjugacy classes of isotropy groups $G$ satisfying  
\[
n_V(G) = n_W(G)\qquad \text{ and } \qquad |WG| < \infty,
\]  
where $WG$ is the Weyl group given as $NG / G$. Here, $NG$ denotes the normalizer of $G$ in $O(2)$. Thus, in our situation (assuming that $V$ and $W$ are in our universe $\R^\infty \oplus \wt{\R}^\infty \oplus \C^\infty$), we have  
\[
\Phi (V, W, O(2)) \subset \{ S^1, \langle I \rangle , O(2) \},
\]  
where the notations $S^1$, $\langle I \rangle$, and $O(2)$ denote their conjugacy classes.  

We suppose the following conditions: 
\begin{itemize}
    \item[(I)] For any isotropy group $G \subset O(2)$, we get 
\[
n_V(G) \leq n_W(G). 
\]
\item[(II)] For any $G \in \Phi (V, W, O(2))$, the groups $\wt{H}^{n_V(G)} \left( (V^G)^+ \right)$ and $\wt{H}^{n_W(G)} \left( (W^G)^+ \right)$ are isomorphic as $WG$-modules.

\item[(III)] For any $(K) \in \Phi (V, W, O(2))$, 
we have 
\[
1 + \dim \left((V^+ )^{>K}\right) < n_V(K), 
\]
where $(V^+ )^{>K}$ denotes the set of $K$-fixed points in $(V^+ )$ whose isotropy groups are strictly larger than $K$.  
\end{itemize}
These assumptions (I) and (II) correspond to (ii) and (iv) in \cite[page 125, (4.10)]{DT87} respectively. The other assumptions (i) and (iii) \cite[page 125, (4.10)]{DT87} are obviously satisfied in our situation. 
The condition (III) corresponds to \cite[The assumption in (iv) of Theorem 4.11 in page 126]{DT87}.  Also, for (II), the possibilities of Weyl groups are the trivial or $O(2)/U(1) \cong \Z_2$, and the condition (II) is also obvious in our situation. 
The condition (III) is expressed as  
\[
1 < n_V(O(2)), \qquad 1 < n_V(S^1), \qquad \text{ and } \qquad 1 < n_V(\langle I \rangle), 
\] 
which can also be achieved in the $O(2)$-equivariant stable homotopy category.
Under these assumptions, the following is proven in \cite[page 126, Theorem 4.11]{DT87}:  
\begin{thm}\label{TD}
Under the assumptions above, two $O(2)$-equivariant continuous maps  
\[
f_0, f_1 \colon V^+ \to W^+
\]
are $O(2)$-equivariantly homotopic if $\deg \left(f_0^G \right) = \deg \left(f_1^G\right)$ for any $G \in \Phi (V, W, O(2))$.
\end{thm}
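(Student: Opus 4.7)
The plan is to prove this by equivariant obstruction theory, inducting over orbit types from the most symmetric to the least. By the stated hypothesis, the only isotropy subgroups that occur on $V^+$ and $W^+$ lie in $\{\{e\}, S^1, \langle I\rangle\}$; in particular $V^{O(2)} = W^{O(2)} = 0$, so the orbit-type filtration of $V^+$ reads
\[
\{\ast\} \subset (V^{S^1})^+ \vee (V^{\langle I\rangle})^+ \subset V^+,
\]
and analogously for $W^+$. Both become $O(2)$-CW complexes, and the goal is to build an $O(2)$-equivariant homotopy $H\colon I\times V^+ \to W^+$ between $f_0$ and $f_1$ by extending across these strata one at a time.

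First I would construct the homotopy on the $S^1$- and $\langle I\rangle$-fixed subspheres $(V^{S^1})^+$ and $(V^{\langle I\rangle})^+$, on which the residual action is by the Weyl groups $W(S^1) = W(\langle I\rangle) = \Z_2$. Each restriction $f_i^G$ is a $\Z_2$-equivariant map of representation spheres $(V^G)^+\to (W^G)^+$, and the classical equivariant Hopf theorem for $\Z_2$ says that two such maps are $\Z_2$-equivariantly homotopic iff they agree in degree on the whole sphere and on the fixed-point sub-sphere. The fixed-point sub-spheres $(V^{O(2)})^+ = \{\ast\}$ are points, so the fixed-point-degree condition is vacuous; the ambient-degree condition $\deg f_0^G = \deg f_1^G$ is matched by hypothesis when $G\in\Phi$, and when $G\notin\Phi$ the dimension inequality $n_V(G) < n_W(G)$ is strict and both restrictions are $\Z_2$-null-homotopic for dimension reasons. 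In either case one obtains $\Z_2$-equivariant homotopies $H^{S^1}$ and $H^{\langle I\rangle}$ on the two intermediate strata which agree on the common basepoint and hence assemble into an $O(2)$-equivariant homotopy on $(V^{S^1})^+ \vee (V^{\langle I\rangle})^+$.

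The final step is to extend this partial homotopy across the free stratum $V^+\smallsetminus ((V^{S^1})^+\cup (V^{\langle I\rangle})^+)$, which consists of principal $O(2)$-orbits. The equivariant obstruction cells here are of the form $O(2)\times D^k$, and the corresponding potential obstructions lie in ordinary homotopy groups of $W^+$. Because $O(2)$ is one-dimensional, each orbit $O(2)\cdot x$ contributes an extra $S^1$-direction that effectively lowers the relevant obstruction dimension by one; equivalently, $|W\{e\}| = \infty$, so the Bredon cohomology with coefficients in the target's homotopy groups vanishes on this stratum. The main technical point, and the hardest part to set up cleanly, is to package all three extension steps into the appropriate Bredon-cohomology framework and verify that the obstruction sheaves vanish off $\Phi$; this is carried out in detail in \cite[Chapter II, \S4]{DT87}, whose argument I would follow.
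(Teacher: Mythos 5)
The paper does not prove this theorem at all: it is quoted verbatim as Theorem 4.11 on page 125 of tom Dieck's \emph{Transformation Groups} \cite{DT87}, and the paper simply cites it. Your obstruction-theoretic sketch is a correct reconstruction of the argument underlying that reference (orbit-type filtration, Weyl-group Hopf theorem on the two closed strata, and vanishing of the free-stratum obstructions because $\lvert W\{e\}\rvert = \infty$ drops the relevant dimension), and you explicitly defer the details back to the same source, so I'd call this essentially the same approach rather than a different one. Two small imprecisions worth fixing in your write-up, neither fatal: the union $(V^{S^1})^+ \cup (V^{\langle I\rangle})^+$ is a pushout over $(V^{O(2)})^+ = S^0$ (they meet at both $0$ and $\infty$), not a wedge; and when $n_V(G) < n_W(G)$ the restrictions $f_i^G$ are generally \emph{not} $\Z_2$-nullhomotopic (their $\Z_2$-fixed restrictions to $S^0$ are identities, not constants) --- the dimension argument actually shows $f_0^G$ and $f_1^G$ are $\Z_2$-homotopic to \emph{each other}, which is what you need.
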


\bibliographystyle{alpha}
\bibliography{knotbib}

\end{document}